\newtheorem{theorem}{Theorem}
\newtheorem{lemma}[theorem]{Lemma}
\newtheorem{proposition}[theorem]{Proposition}
\theoremstyle{definition}
\newtheorem{definition}[theorem]{Definition}
\theoremstyle{remark}
\newtheorem{remark}[theorem]{Remark}
\newtheorem{notation}[theorem]{Notation}
\newcommand{\thebottomline}{%
\renewcommand{\thefootnote}{}
\renewcommand{\footnoterule}{}
\phantom{M}\footnotetext{\hfill\tiny\textit{\noindent\romannumeral\day.\romannumeral\month.\romannumeral\year}}}
\newcommand{\str}{\mathrm{T}}
\newcommand{\ltr}{{\reflectbox{\tiny$\Gamma$}}}
\newcommand{\rtr}{\Gamma}
\newcommand{\tr}{\operatorname{tr}}
\newcommand{\E}{\operatorname{E}}
\newcommand{\bC}{\mathbb{C}}
\newcommand{\bR}{\mathbb{R}}
\newcommand{\bZ}{\mathbb{Z}}
\newcommand{\myvee}{\,\vee\,}
\let\phi=\varphi
\newcommand{\Tr}{\operatorname{Tr}}
\newcommand{\cA}{\mathcal{A}}
\newcommand{\ds}{\displaystyle}
\newcommand{\cP}{\mathcal{P}}
\theoremstyle{definition}
\newcommand{\ab}{\allowbreak}
\def\bR{\mathbb{R}}
\title[{Freeness and the Partial Transpose}] {Freeness and
  The Partial Transposes of Wishart Random Matrices}
\author[mingo]{James A. Mingo$^{(*)}$} \address{Department
  of Mathematics and Statistics, Queen's University, Jeffery
  Hall, Kingston, Ontario, K7L 3N6, Canada}
\email{mingo@mast.queensu.ca} 
\thanks{$^*$ Research supported by a Discovery Grant from
  the Natural Sciences and Engineering Research Council of
  Canada}
\author[popa]{Mihai Popa$^{( * )(* *)}$ } \address{The
  University of Texas at San Antonio, Department of
  Mathematics, One UTSA Circle, San Antonio, Texas 78249,
  and \newline ${}\hspace{.4cm}{}$ Institute of Mathematics
  ``Simion Stoilow'' of the Romanian Academy, P.O. Box
  1-764, Bucharest, RO-70700, Romania }
\email{Mihai.Popa@utsa.edu} 
\thanks{$^{ * * }$ Research supported by the Simons
  Foundation grant No. 360242.}
\begin{document}

\begin{abstract}
We show that the partial transposes of complex Wishart
random matrices are asymptotically free. We also investigate
regimes where the number of blocks is fixed but the size of
the blocks increases. This gives a example where the partial
transpose produces freeness at the operator level. Finally
we investigate the case of real Wishart matrices. 
\end{abstract}

\maketitle


\section{Introduction}\label{sec:introduction} 

Suppose we have a matrix $A$ in $M_{d_1}(\bC) \otimes
M_{d_2}(\bC)$. We can write this as a block matrix.
\[
A =
\def\arraystretch{1.4}
\left(\begin{array}{c|c|c}
A(1,1)   & \ \cdots\  & A(1,d_1) \\ \hline
\vdots   &            & \vdots  \\ \hline
A(d_1,1) & \ \cdots\  & A(d_1,d_1). \\
\end{array}\right).
\]
with each $A(i,j) \in M_{d_2}(\bC)$. We can form two partial
transposes of this matrix.

\hbox{
$\kern-1.5em
A^\ltr =
\def\arraystretch{1.4}
\left(\begin{array}{c|c|c}
A(1,1)   & \ \cdots\  & A(d_1,1) \\ \hline
\vdots   &            & \vdots  \\ \hline
A(1,d_1) & \ \cdots\  & A(d_1,d_1). \\
\end{array}\right),
A^\rtr =
\def\arraystretch{1.4}
\left(\begin{array}{c|c|c}
A(1,1)^\str   & \ \cdots\  & A(1,d_1)^\str \\ \hline
\vdots        &            & \vdots  \\ \hline
A(d_1,1)^\str & \ \cdots\  & A(d_1,d_1)^\str. \\
\end{array}\right).
$}

In quantum information theory the partial transpose has been
used as an entanglement detector. Suppose that $A$ is a 
positive matrix in $M_{d_1}(\bC) \otimes M_{d_2}(\bC)$. Recall 
that $A$ is entangled if  we cannot find positive matrices $B_1, \dots, \ab
B_k \in M_{d_1}(\bC)$ and $C_1, \dots, C_k \in M_{d_2}(\bC)$
such that $A = \sum_{i=1}^k A_i \otimes B_i$. If a positive
matrix fails to have a positive partial transpose then the
matrix must be entangled. It was shown by Aubrun
\cite{aubrun} that in a particular regime of the Wishart
distribution, the partial transpose of a positive matrix is
typically entangled. He also showed that the limiting
distribution of a partially transposed Wishart matrix is
semi-circular. This was quite unexpected. We revisit his
theorem and show that the conclusion can be explained by the
requirement that the non-crossing partitions that survive in
the limit, have to remain non-crossing when the order of
elements is reversed.

In this paper we show that in addition to transforming a
Marchenko-Pastur into a semi-circular distribution, the
partial transpose also produces freeness --- in two
different regimes. The first is when both $d_1$ and $d_2$
tend to $\infty$; we show that when $W$ is a Wishart matrix,
$W$, $W^\ltr$, $W^\rtr$ and $W^\str$ are asymptotically free
in the complex case and that $W = W^\str$ and $W^\rtr =
W^\ltr$ are asymptotically free in the real case.

The second regime is the one considered by Banica and
Nechita \cite{bn} where we fix $d_1$ and let $d_2$ tend to
$\infty$. In this case we show that $W$ and $W^\rtr$ are
asymptotically free. As $(W^\ltr)^\str = W^\rtr$, one can
has that $W^\ltr$ and $W^\rtr$ have the same limit
distributions. Banica and Nechita showed that the limit
distribution of $d_1W^\ltr$ could be written as the free
difference of two Marchenko-Pastur distributions.  We can
show that in the same regime and when $d_1 = 2$, one can
write the limit distribution of $d_1W^\ltr$ as the sum of
two free operators $X_1$ and $X_2$ coming from the diagonal
and off-diagonal parts of $d_1W^\ltr$. More precisely if we
write a Marchenko-Pastur random variable $w$ as a block
matrix
\[
w= \frac{1}{2} 
\begin{pmatrix}
w_{11} & w_{12} \\
w_{21} & w_{22}
\end{pmatrix},
X_1 = 
\begin{pmatrix}
w_{11} &    0   \\
0      & w_{22} \\
\end{pmatrix}
,
X_1 = 
\begin{pmatrix}
0 &    w_{21}   \\
w_{12}      & 0 \\
\end{pmatrix},
\]
then $d_1w^\ltr = X_1 + X_2$ and $X_1$ and $X_2$ are free.
Moreover $X_1$ is a Marchenko-Pastur random variable with
the same distribution as $2w$ and $X_2$ is an even operator
whose even cumulants are the same as those of $X_1$. So by
writing $d_1w^\ltr$ as a sum as opposed to a difference we
get a natural representation for the two operators.

The connection of the transpose to freeness goes back to the
work of Emily Redelmeier on real second order freeness
\cite{r2}. She showed that the fluctuation moments of real
Gaussian and Wishart matrices require the transpose to taken
into account. Later in \cite{mp2} the authors showed that
the transpose can also appear at the first order level.
Namely, that for many complex ensembles a random matrix
could be asymptotically free from its transpose. Before this
the known examples of asymptotic freeness required
independence of the entries, see \cite[Ch. 4]{ms} and
\cite[Lect. 23, 24]{ns} for examples.

Our main tool here is the explicit evaluation mixed moments
of the four matrices $W$, $W^\ltr$, $W^\rtr$ and $W^\str$
and then to show that mixed cumulants must vanish, thus
demonstrating freeness. In order to achieve this we use the
technique of doubling of indices which already appeared in
the work of Redelmeier \cite{r1, r2}.

Besides the transpose one can consider the action of other
positive linear maps on the blocks of matrices and the effect
on the limit eigenvalue distribution. This was considered in 
considerable generality in the recent paper of Arizmendi, 
Nechita, and Vargas \cite{anv}. A third regime was considered 
by Fukuda and \'Sniady in \cite{fs} and a connection to meander 
polynomials was found.

The outline of the paper is as follows. In section
\ref{sec:notation} we establish the notation needed for our
calculations. The main method for computing mixed moments is
to expand the expression as a sum over the symmetric
group. This part is presented in Theorem
\ref{thm:exp_mixed_mom} of Section
\ref{sec:basic-calculations}. In Section
\ref{sec:asymptotics-of-permutations} we determine which
permutations contribute to the limit. The main result in
this section is Proposition
\ref{prop:sigma_non_crossing}. In Section
\ref{sec:limit-distributions} we consider the limit
distributions of our partially transposed operators in the
two regimes. In particular we shall show that the operators
$X_1$ and $X_2$ mentioned above are free. In Section
\ref{sec:asymptotic-freeness} we present our main results,
the asymptotic freeness of our partially transposed Wishart
matrices. In Section \ref{sec:real-case} we consider the
situation for real Wishart matrices.


\section{Notation and statement of results}\label{sec:notation}

Suppose $G_1, \dots, G_{d_1}$ are $d_2 \times p$ random
matrices where $G_i = (g^{(i)}_{jk})_{jk}$ and
$g^{(i)}_{jk}$ are complex Gaussian random variables with
mean 0 and (complex) variance 1, i.e. $\E(|g^{(i)}_{jk}|^2)
= 1$. Moreover suppose that the random variables $\{
g^{(i)}_{jk} \}_{i,j,k}$ are independent.

\[
W =  \frac{1}{d_1d_2}
\left(
\begin{array}{c}
G_1 \\ \hline
\vdots \\ \hline
G_{d_1}
\end{array}
\right)
\left(
\begin{array}{c|c|c}
G_1^* & \cdots & G_{d_1}^*
\end{array}
\right) = \frac{1}{d_1d_2} (G_iG_j^*)_{ij}
\]
is a $d_1d_2 \times d_1d_2$ \textit{complex} Wishart matrix. We write $W =
d_1^{-1}(W(i,j))_{ij}$ as $d_1 \times d_1$ block matrix with
each entry the $d_2 \times d_2$ matrix
$d_2^{-1}G_iG_j^*$. From this we get four matrices $\{ W,
W^\ltr, W^\rtr, W^\str\}$ defined as follows:

\begin{itemize}

\item
$W^\str = \frac{1}{d_1}(W(j,i)^\str)_{ij}$ is
  the ``full'' transpose

\item
$W^\ltr = \frac{1}{d_1}(W(j,i))_{ij}$ is the ``left''
  partial transpose

\item
$W^\rtr = \frac{1}{d_1}(W(i,j)^\str)_{ij}$ is
  the ``right'' partial transpose
\end{itemize}

Note that the $X \mapsto X^\rtr$ notation conceals the
dependence on $d_1$ and $d_2$. Thus as the size of the
matrix grows these operators might be expected to behave
differently, depending on the way $d_1$ and $d_2$ grow.

If the random variables $\{ g{(i)}_{jk}\}_{i,j, k}$ are real
Gaussian random variables with mean 0 and variance 1 then
$W$ is a \textit{real} Wishart matrix. For many eigenvalue
results there is no distinction between the real and complex
case. In \cite{mp2} we showed that when it comes to freeness
there is a difference, in particular with respect to the
behaviour of the transpose. In this paper we show that with
the partial transpose we continue to see a difference
between the real and complex cases.

If we assume that $d_1, d_2 \rightarrow \infty$ and that
$\ds\frac{p}{d_1d_2}\rightarrow c$, $0 < c < \infty$, then
the eigenvalue distributions of $W$ and $W^\str$ converge to
Marchenko-Pastur with parameter $c$. This is the
distribution on $\bR^+$ that has density $\ds\frac{\sqrt{(b
    - t)(t - a)}}{2 \pi t}$ on $[a, b]$ and an atom of
weight $(1 - c)$ at $0$ if $ c < 1$; we set $b = (1 +
\sqrt{c})^2$ and $a = (1 - \sqrt{c})^2$.

Note that we are using what one might call the free
probabilist's Marchenko-Pastur law. In our normalization all
the cumulants are equal to $c$. For the relation between the
two see \cite[Ch. 2 Remark 12]{ms}. With this normalization
we can restate Aubrun's theorem.

\medskip\noindent\textit{%
Suppose $\ds\lim_{d_1, d_2 \rightarrow \infty}
\frac{p}{d_1d_2} = c$, then the eigenvalue distributions of
$W^\ltr$ and $W^\rtr$ converge to a shifted semi-circular
operator with mean $c$ and variance $c$.
}

\medskip

Our main results are the following.

\begin{theorem}
Suppose $\ds\lim_{d_1, d_2 \rightarrow \infty}
\frac{p}{d_1d_2} = c$, then the family $\{ W, W^\ltr,
W^\rtr,\ab W^\str\}$ is asymptotically free in the complex case
and the family $\{ W, W^\rtr\}$ is asymptotically free in
the real case.
\end{theorem}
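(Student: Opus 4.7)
The plan is to establish asymptotic freeness by verifying the vanishing of all mixed free cumulants of the limiting joint distribution. Since the marginal limit laws are already in hand — Marchenko--Pastur with parameter $c$ for $W$ and $W^\str$, and the shifted semicircular with mean and variance both $c$ for $W^\ltr$ and $W^\rtr$ by Aubrun's theorem — once every mixed free cumulant with non-constant label tuple $(\epsilon_1,\dots,\epsilon_n) \in \{\mathrm{id},\ltr,\rtr,\str\}^n$ is shown to tend to zero, the joint distribution is forced to be that of a free family.

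The first step is to start from the explicit formula of Theorem \ref{thm:exp_mixed_mom}, which expresses the normalized mixed moment $\frac{1}{d_1 d_2}\E[\Tr(W^{\epsilon_1}\cdots W^{\epsilon_n})]$ as a sum over permutations (equivalently, Wick pairings of the Gaussian entries of $G_1,\dots,G_{d_1}$). Each summand is a monomial of the form $d_1^{a}d_2^{b}p^{c}/(d_1 d_2)^{n+1}$ whose exponents count cycles in compositions of permutations, where the compositions depend on the label tuple through the doubling-of-indices technique of Redelmeier. Taking the limit $d_1,d_2 \to \infty$ with $p/(d_1 d_2) \to c$, Proposition \ref{prop:sigma_non_crossing} isolates the permutations that survive: precisely those pairings on the doubled index set that remain non-crossing once one applies the label-induced order reversals produced by $\ltr$ and $\rtr$.

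To convert this moment-level information into a freeness statement, I would invoke Möbius inversion over the lattice $NC(n)$: the free cumulant $\kappa_n(W^{\epsilon_1},\dots,W^{\epsilon_n})$ is extracted from the part of the limit sum supported on surviving pairings whose associated non-crossing partition reduces to the single block $\{1,\dots,n\}$. The crucial combinatorial claim, which I expect to be the main obstacle, is that a surviving single-block pairing can exist only when all labels $\epsilon_i$ coincide: the reversal constraints imposed at each position by $\ltr$ and $\rtr$ on the doubled indices of a connected pairing force a globally consistent reversal pattern, and consistency across the whole single block translates directly into equality of the labels. Executing this cleanly requires careful bookkeeping of the bijection between pairings on $\{1,\dots,2n\}$ and the index contractions enforced by each $\epsilon_i$, and is the step where I expect most of the technical work to land.

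For the real case, the Wick formula no longer restricts to pairings of $g$ with $\bar g$, so the index set of contributing pairings enlarges from $\pn$ to $\pnn$ in the notation of the paper. At the same time, the identifications $W = W^\str$ and $W^\ltr = W^\rtr$ collapse the family to $\{W, W^\rtr\}$. Within this reduced family, the same single-block analysis should go through — the additional symmetry present in $\pnn$ does not create new single-block pairings mixing the two labels. By contrast, this enlarged symmetry is precisely what would obstruct freeness of the full four-element family in the real setting, which accounts for the restriction of the real statement to the pair $\{W, W^\rtr\}$.
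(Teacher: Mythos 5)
Your plan is essentially the paper's own: start from Theorem~\ref{thm:exp_mixed_mom}, use the asymptotics of $f_\epsilon$ and $f_\eta$ to isolate the surviving permutations, and then read off free cumulants and show the mixed ones vanish. Two remarks, though, on where your outline diverges from what the paper actually does.

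First, the step you flag as the ``main obstacle'' --- that a surviving configuration connecting positions with different labels cannot occur --- is already dispatched by Lemma~\ref{lemma:negativity_of_f}: $f_\epsilon(\sigma)<0$ unless $\epsilon$ is constant on the cycles of $\sigma$, and the same for $\eta$. Once you have that, there is no further bookkeeping; for a single $n$-cycle ``constant on cycles'' is just ``constant'', and more importantly the constraint is imposed \emph{on every block simultaneously}, which is what you actually need. The paper does not then run a M\"obius inversion. Because $\sigma$, $\sigma_\epsilon$, $\sigma_\eta$ all induce the same partition, the surviving sum $\sum_\sigma c^{\#(\sigma)}$ (over non-crossing $\sigma$ with $\epsilon,\eta$ constant on each cycle) is already in the multiplicative form $\sum_{\pi\in NC(n)}\kappa_\pi$ of the free moment--cumulant formula, so the candidate cumulants are forced by uniqueness: $\kappa_V=c$ if the labels on $V$ agree and $\kappa_V=0$ otherwise. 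That kills \emph{all} mixed cumulants at once, not merely the full-block one. Your M\"obius-inversion phrasing, where you ``extract'' $\kappa_n$ from the single-block term, is slightly circular: to know the single-block term \emph{is} $\kappa_n$ you must already have identified the whole sum as being in moment--cumulant form.

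Second, in the real case (Theorems~\ref{thm:24} and~\ref{thm:main_real}) the surviving $\pi\in\pnn$ are constrained so that \emph{both} $\sigma=\pi\delta|_{[n]}$ and $\sigma_\epsilon$ are non-crossing, a stronger requirement than in the complex case since there is no $\eta$-degree of freedom to absorb one of the constraints. Combined with Lemma~\ref{lemma:non-crossing-inverse}, this forces cycles carrying $\epsilon=-1$ to have length at most $2$, which is exactly why $W^\rtr$ is semicircular rather than Marchenko--Pastur. Your suggestion that the enlarged set of pairings ``would obstruct freeness of the full four-element family'' is a misreading: in the real case $W=W^\str$ and $W^\ltr=W^\rtr$ hold identically as matrices, so there is no four-element family to obstruct; the non-freeness shows up only in the asymmetric regimes where one of $d_1,d_2$ is held fixed, and that is a separate computation in the paper.
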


\begin{theorem}
Suppose $d_1$ is fixed and$\ds\lim_{d_2 \rightarrow \infty}
\frac{p}{d_1d_2} = c$, then the family $\{ W, W^\rtr\}$ is
asymptotically free in the complex case.
\end{theorem}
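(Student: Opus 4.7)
The plan is to follow the cumulant-vanishing strategy the paper develops for Theorem~1 above, but specialized to the asymmetric scaling in which only $d_2$ grows. Fix a word $A_{\epsilon_1} \cdots A_{\epsilon_n}$ with each $A_{\epsilon_k} \in \{W, W^\rtr\}$. By Theorem~\ref{thm:exp_mixed_mom}, $\E[\tr(A_{\epsilon_1} \cdots A_{\epsilon_n})]$ expands as a sum indexed by permutations $\sigma \in S_n$, weighted by a monomial $d_1^{a(\sigma,\epsilon)} d_2^{b(\sigma,\epsilon)} p^{c(\sigma,\epsilon)}$ arising from Wick's formula applied to the Gaussian blocks. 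In the fixed-$d_1$ regime the dominant terms are those maximizing $b(\sigma,\epsilon)+c(\sigma,\epsilon)$ alone, with the $d_1$-prefactors surviving as nontrivial multiplicative constants in the limit rather than being suppressed to $1$ as in the double-limit regime. Proposition~\ref{prop:sigma_non_crossing} still isolates the relevant $\sigma$'s, and I would re-read that proposition carefully to confirm which class survives in this asymmetric scaling.

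To conclude freeness of $W$ and $W^\rtr$ it suffices to show that the joint free cumulants $\kappa_n(A_{\epsilon_1},\ldots,A_{\epsilon_n})$ vanish whenever the $\epsilon_k$'s are not all equal. I would M\"obius-invert directly on the asymptotic expansion, grouping the surviving $\sigma$'s by the non-crossing partition $\pi=\pi(\sigma)$ they induce. The key geometric observation is that a block of $\pi$ containing both a $W$-position and a $W^\rtr$-position forces, through the reversal encoded by $\rtr$, a crossing in either the forward or the reversed cyclic order on that block, and this crossing costs a factor $d_2^{-1}$, so it contributes only subleading terms. Consequently the limiting moment factors across $\pi$-blocks that are monochromatic in the $\epsilon$-pattern, which is precisely the vanishing-of-mixed-cumulants characterization of freeness.

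The main obstacle is establishing the ``one mixed index in a block costs $d_2^{-1}$'' claim in this asymmetric scaling. In Theorem~1 the analogous step is clean because both $d_1$ and $d_2$ suppression cooperate; here only the $d_2$-direction carries the decay, and one must rerun the index-counting argument of Section~\ref{sec:asymptotics-of-permutations} tracking the $d_2$-exponent alone and confirming that any $\sigma$ whose induced partition mixes a $W$- and a $W^\rtr$-position indeed loses one power of $d_2$ compared with the refinement in which that block is split along the $W/W^\rtr$ boundary. Once this geometric loss-of-one-power lemma is verified, assembling the M\"obius inversion gives the desired vanishing of mixed cumulants and hence the asymptotic freeness of $\{W, W^\rtr\}$ in the complex case.
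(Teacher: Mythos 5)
Your overall strategy is the paper's own: expand via Theorem \ref{thm:exp_mixed_mom}, keep only the $\sigma$ that survive as $d_2\to\infty$, observe that these are indexed by non-crossing partitions whose blocks are monochromatic, and conclude by M\"obius inversion that mixed free cumulants vanish. But the step you yourself flag as ``the main obstacle'' --- that a block mixing a $W$-position with a $W^\rtr$-position loses a power of $d_2$ --- is exactly the crux, and you neither prove it nor connect it to the result in the paper that does. The missing observation is that $W$ corresponds to $(\epsilon,\eta)=(1,1)$ and $W^\rtr$ to $(1,-1)$, so the two matrices are distinguished precisely by the coordinate $\eta$, whose exponent function $f_\eta$ controls the power of $d_2$. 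Lemma \ref{lemma:negativity_of_f}, applied with $\eta$ in place of $\epsilon$, says that if $\eta$ is not constant on some cycle of $\sigma$ then $f_\eta(\sigma)\le -1$, so the term is $O(d_2^{-1})$; for the survivors Proposition \ref{prop:sigma_non_crossing} forces $\sigma_\eta$ (hence the underlying partition) to be non-crossing, and the fixed factor $d_1^{\,f_\epsilon(\sigma)}$ is just a block-multiplicative constant. This is also why the statement holds for $\{W,W^\rtr\}$ but fails for $\{W,W^\ltr\}$ in this regime: the latter pair differs only in $\epsilon$, and $f_\epsilon$ carries no decay when $d_1$ is fixed.

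Moreover, the heuristic you offer for the loss of a power of $d_2$ --- that a mixed block ``forces a crossing in either the forward or the reversed cyclic order'' --- is not the right mechanism and is false as stated. Take $n=2$ and the full cycle $\gamma=(1,2)$, with position $1$ a $W$ and position $2$ a $W^\rtr$: both $\gamma$ and $\gamma^{-1}$ are non-crossing, yet the paper computes $\E(\tr\otimes\tr(WW^\rtr))\to c^2+c\,d_2^{\,f_\eta(\gamma)}$ with $f_\eta(\gamma)<0$, so $\kappa_2(W,W^\rtr)\to 0$ without any crossing being present. The suppression comes from $\eta$ failing to be constant on the cycle: by Lemma \ref{lemma:epsilon_constant} the set $\eta\delta\sigma\delta\sigma^{-1}\eta[n]$ then meets $[-n]$, and the genus bound (\ref{eq:eulers_formula}) yields $f_\eta(\sigma)=-(g+1)\le -1$. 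The ``non-crossing in both orders'' constraint is a different phenomenon altogether: it governs the limiting distribution of $W^\rtr$ alone in the doubly-growing regime (Lemma \ref{lemma:non-crossing-inverse}), not the vanishing of mixed cumulants here.
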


In $M_{2}(\bC)$ let $\{ E_{ij}\}_{ij=1}^2$ be the standard
matrix units. For convenience of notation we shall write
$E_{ij}$ for $E_{ij} \otimes I_{d_2} \in M_2(\bC) \otimes
M_{d_2}(\bC)$.

\begin{theorem}
Suppose $d_1 = 2$, $p, d_2 \rightarrow \infty$ and
$\ds\lim_{d_2 \rightarrow \infty} \frac{p}{d_1d_2} =
c$. Then $\{ W, W^\ltr, E_{ij} \}_{i, = 1}^2$ has a limit
joint distribution, $\{w, w^\ltr, e_{ij} \}_{i,j=1}^2$, in a
non-commutative $*$-probability space $(\cA,
\phi)$. Relative to the matrix units $\{ e_{ij} \}_{i,j =
  1}^2$ we write
\[
w = \frac{1}{2} \begin{pmatrix}
w_{11} & w_{12} \\ w_{21} & w_{22} \end{pmatrix}
\mbox{\ and\ }
w^\ltr = \frac{1}{2} \begin{pmatrix}
w_{11} & w_{21} \\ w_{12} & w_{22} \end{pmatrix}.
\]
Then $X_1 = \begin{pmatrix} w_{11} & 0 \\ 0 &
  w_{22} \end{pmatrix}$ and $X_2 = \begin{pmatrix} 0 &
  w_{21} \\ w_{12} & 0 \end{pmatrix}$ are free. $X_1$ has a
Marchenko-Pastur distribution with parameter $2c$ and $X_2$
is an even operator with all even cumulants equal to $2c$.
\end{theorem}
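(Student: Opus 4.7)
My plan is to work in the limit noncommutative $*$-probability space $(\cA, \phi) \cong (M_2(\cB), \mathrm{tr}_{M_2} \otimes \tau_\cB)$, where $\cB$ is generated by the limits $w_{ij}$ of the blocks $W(i,j) = \frac{1}{d_2} G_i G_j^*$. The existence of the joint limit of $\{W, W^\ltr, E_{ij}\}$ follows from the mixed-moment expansion (Theorem \ref{thm:exp_mixed_mom}) together with the surviving-permutation analysis used in the fixed-$d_1$ case (the previous theorem above). Since the Gaussian ensembles $G_1, G_2$ are independent, classical asymptotic freeness realizes $w_{ij} = g_i g_j^*$ in $\cB$, where $g_1, g_2$ are $*$-free circular elements with $\tau_\cB(g_i g_i^*) = 2c$ (using $p/d_2 \to 2c$). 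In this realization,
\[
X_1 = \begin{pmatrix} g_1 g_1^* & 0 \\ 0 & g_2 g_2^* \end{pmatrix}, \qquad X_2 = \begin{pmatrix} 0 & g_2 g_1^* \\ g_1 g_2^* & 0 \end{pmatrix}.
\]

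The distribution of $X_1$ follows quickly: $w_{11} = g_1 g_1^*$ and $w_{22} = g_2 g_2^*$ live in disjoint subalgebras of $\cB$ and so are free, each with the Marchenko--Pastur distribution of parameter $2c$. The tracial state on $M_2(\cB)$ then gives $\phi(X_1^n) = \tfrac{1}{2}(\tau_\cB(w_{11}^n) + \tau_\cB(w_{22}^n)) = \tau_\cB(w_{11}^n)$, so $X_1 \sim \mathrm{MP}(2c)$. For $X_2$, an induction shows $X_2^{2n} = e_{11}(g_2 g_1^* g_1 g_2^*)^n + e_{22}(g_1 g_2^* g_2 g_1^*)^n$ while odd powers have vanishing $M_2$-diagonal, so $\phi(X_2^{2n+1}) = 0$ and $X_2$ is even; the even moments satisfy $\phi(X_2^{2n}) = \tau_\cB((g_1 g_2^* g_2 g_1^*)^n)$, and expanding these in non-crossing pair partitions via the $*$-free cumulant relations for $\{g_1, g_2\}$ shows that all even free cumulants of $X_2$ equal $2c$.

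To prove freeness of $X_1$ and $X_2$, I would compute alternating mixed moments $\phi(X_1^{n_1} X_2^{m_1} X_1^{n_2} X_2^{m_2} \cdots)$ by expanding each factor via its matrix-unit decomposition and applying the $M_2$-trace, which forces an alternating cyclic pattern on the $e_{ij}$ indices. This pattern in turn constrains the resulting word in $\{g_1, g_1^*, g_2, g_2^*\}$ so that only certain non-crossing pair partitions can contribute; by direct inspection these match the contributions required by scalar freeness of $X_1$ and $X_2$. Equivalently, every mixed free cumulant $\kappa_n(Y_1, \ldots, Y_n)$ with $Y_i \in \{X_1, X_2\}$ and not all $Y_i$ equal vanishes.

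The main obstacle is this last combinatorial step. The alternating matrix-unit pattern and the $*$-free cumulant structure of $g_1, g_2$ must conspire so that the diagonal-versus-off-diagonal character of $X_1$ versus $X_2$ exactly matches the separation needed at the partition level to kill every mixed cumulant. Carrying out this bookkeeping cleanly, in parallel with the mixed-cumulant vanishing argument used for the previous theorem, is the most delicate part of the proof.
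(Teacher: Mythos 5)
There is a genuine gap, and it sits exactly where the content of the theorem lies. Your plan for the freeness of $X_1$ and $X_2$ is ``expand alternating moments, apply the $M_2$-trace, and check by direct inspection that only the partitions compatible with scalar freeness survive'' --- but you then concede that carrying out this bookkeeping is ``the most delicate part of the proof'' and do not supply it. That bookkeeping \emph{is} the proof: the paper itself points out that $X_1$ is \emph{not} free from $X_2^\ltr = \bigl(\begin{smallmatrix} 0 & w_{12} \\ w_{21} & 0\end{smallmatrix}\bigr)$, which has the same diagonal/off-diagonal shape and the same $*$-distribution of entries as $X_2$. So no argument that only tracks ``diagonal versus off-diagonal'' together with the $*$-distribution of the blocks can succeed; the mechanism must see the specific placement of $w_{12}$ and $w_{21}$. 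The paper's route is to factor $X_2 = Y_2Y_3$ with $Y_2 = \mathrm{diag}(w_{21},w_{12})$ and $Y_3$ the flip matrix, apply the products-as-arguments cumulant formula \cite[Thm.\ 11.12]{ns}, and then run a parity count (each block of a contributing $\pi$ must contain equally many, and evenly many, $Y_2$'s and $Y_3$'s --- Lemma \ref{lemma:balanced-blocks}) which collides with the presence of a $Y_1$ singleton forced by $\pi\vee\rho = 1_q$. Nothing in your sketch identifies this or any substitute mechanism, so the central claim remains unproved.

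A secondary problem is your realization $w_{ij} = g_ig_j^*$ with $g_1,g_2$ $*$-free circular and $\tau_\cB(g_ig_i^*) = 2c$. For a circular $g$ with $\tau(gg^*)=\lambda$ one has $g = \sqrt{\lambda}\,g_0$ with $g_0$ standard circular, so $gg^* = \lambda\, g_0g_0^*$ has free cumulants $\kappa_n = \lambda^n$; this is \emph{not} the Marchenko--Pastur law with parameter $2c$ (all cumulants equal to $2c$) unless $2c=1$. To make a $g_ig_j^*$ picture correct you would need the rectangular ($d_2\times p$, $p/d_2\to 2c$) framework, which you have not set up. The paper avoids this entirely: by unitary invariance $w$ is free from the matrix units (Lemma \ref{lemma:block_structure}), so the family $\{w_{ij}\}$ is $R$-cyclic and the entry cumulants $\kappa_l^{(1)}(w_{i_1j_1},\dots,w_{i_lj_l}) = 2c$ on cyclic index strings follow from \cite[Example 20.4]{ns} (Remark \ref{rem:entry_cumulants}); the distributions of $X_1$ and $X_2$ then drop out. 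Your computations of $\phi(X_1^n)$ and of the evenness of $X_2$ are fine in spirit, but they rest on a realization that, as stated, gives the wrong law.
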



\section{A general formula for mixed moments}
\label{sec:basic-calculations}

Let $A_1, \dots, A_n$ be $N \times N$ matrices, then
\begin{equation}\label{eq:trace_expansion}
\Tr(A_1 \cdots A_n) =
\sum_{i_{\pm 1}, \dots, i_{\pm n}= 1}^N
a^{(1)}_{i_1, i_{-1}} a^{(2)}_{i_2, i_{-2}} \cdots
a^{(n)}_{i_n i_{-n}}
\end{equation}
where the sum runs over all $i: [\pm n] \rightarrow [N]$
such that $i(-1) = i(2)$, $i(-2) = i(3)$, \dots, $i(-n) = i(1)$. 

We wish to use the symmetric group $S_n$ of permutations on
$[n] = \{1, 2, 3, \dots, n\}$ to keep track of the partial
transposes. So we shall introduce the following notation.
Given a permutation $\sigma$ in $S_n$ we extend $\sigma$ to
be a permutation on $[\pm n] = \{1, -1, 2, -2, 3, -3, \dots,
n, -n\}$ be setting $\sigma(-k) = -k$ for $k > 0$. We let
$\delta$ be the permutation of $[\pm n]$ given by $\delta(k)
= -k$ for all $k \in [\pm n]$ and $\gamma \in S_n$ be the
permutation with one cycle: $\gamma = (1, 2, 3, \dots,
n)$. With our conventions we have $\gamma \delta \gamma^{-1}
= (1, -n)(2, -1)(3, -2) \cdots (n, -(n-1))$. The condition
in (\ref{eq:trace_expansion}) now becomes $i = i\circ
\gamma\delta \gamma^{-1}$.

To show that the family $\{ W, W^\ltr, W^\rtr, W^\str\}$ is
asymptotically free we shall have to compute the expectation
of the trace of arbitrary words in $\{ W, W^\ltr, W^\rtr,
W^\str\}$. For this we use the following notation.

Let $(\epsilon, \eta) \in \{-1, 1\}^2 = \bZ_2^2$. 

\medskip
\noindent
We set $W^{(\epsilon, \eta)} = \left\{
\def\arraystretch{1.3}\begin{array}{cl}
W & \mbox{\ if\ } (\epsilon, \eta) = (1,1) \\
W^\ltr & \mbox{\ if\ } (\epsilon, \eta) = (-1,1) \\
W^\rtr & \mbox{\ if\ } (\epsilon, \eta) = (1,-1) \\
W^\str & \mbox{\ if\ } (\epsilon, \eta) = (-1,-1) \\
\end{array}\right.$

\medskip

Let $(\epsilon_1, \eta_1), \dots, (\epsilon_n, \eta_n) \in
\bZ_2^n$, then an arbitrary word in $\{ W, W^\ltr,
W^\rtr,\ab W^\str\}$ is $W^{(\epsilon_1, \eta_1)} \cdots
W^{(\epsilon_n, \eta_n)}$ and we seek to write
$\ds\lim_{d_1, d_2 \rightarrow \infty}
\E(\tr(W^{(\epsilon_1, \eta_1)}\ab \cdots \ab
W^{(\epsilon_n, \eta_n)}))$ as a sum of free cumulants.

To achieve this we need to introduce still more notation. We
shall suppose that $n$, the length of the word, is fixed for
the moment. Now given $(\epsilon_1, \epsilon_2, \dots,
\epsilon_n)$ we denote by $\epsilon$ the permutation of
$[\pm n]$ given by $\epsilon(k) = \epsilon_{|k|} k$; here $k
\in [\pm n]$, but $|k| > 0$, so $\epsilon_{|k|}$ means the
$k^{th}$ element of our vector $(\epsilon_1, \dots,
\epsilon_n)$. Similarly given $(\eta_1, \dots, \eta_n)$ we
get the permutation $\eta$ of $[\pm n]$. Note that $\delta$,
$\epsilon$ and $\eta$ all commute with each other.

We shall think of $W$, $W^\ltr$, $W^\rtr$, and $W^\str$ as
random elements of $M_{d_1}(\bC) \otimes M_{d_2}(\bC)$. On
this algebra we have a trace $\Tr \otimes \Tr$; we also have
the normalized trace $\tr \otimes \tr = \ds\frac{1}{d_1d_2}
\Tr \otimes \Tr$.

\begin{remark}
With the notations above, we have that
\begin{eqnarray}\label{eq:expansion}\lefteqn{%
\E(\Tr \otimes \Tr(W^{(\epsilon_1, \eta_1)} \cdots
W^{(\epsilon_n, \eta_n)}))} \notag \\ & = & (d_1d_2)^{-n}
  \kern-1em \sum_{j_{\pm 1}, \dots, j_{\pm n}} \sum_{s_{\pm
      1}, \dots, s_{\pm n}} \sum_{t_1, \dots, t_n} \E(
  g^{(j_1)}_{s_1t_1} \cdots g^{(j_n)}_{s_nt_n}
  \ \overline{g^{(j_{-1})}_{s_{-1}t_1}} \cdots
  \overline{g^{(j_{-n})}_{s_{-n}t_n}})
\end{eqnarray}
where the summation is subject to the conditions that $j = j
\circ\epsilon \gamma\delta \gamma^{-1} \epsilon$, $s = s
\circ \eta\gamma\delta \gamma^{-1}\eta$.
\end{remark}

\begin{proof}

\begin{eqnarray*}\lefteqn{
\Tr \otimes \Tr\big( W^{(\epsilon_1, \eta_1)} \cdots
W^{(\epsilon_n, \eta_n)}\big)}\\ &=& d_1^{-n}\sum_{i_1,
    \dots, i_n=1}^{d_1} \Tr\Big( \big(W^{(\epsilon_1,
    \eta_1)}\big)_{i_1i_2} \cdots \big(W^{(\epsilon_n,
    \eta_n)})_{i_ni_1}\Big)\\ &=& d_1^{-n}
  \mathop{\sum_{i_{\pm 1}, \dots, i_{\pm n}=1}}_{i = i\circ
    \gamma\delta \gamma^{-1}}^{d_1} \Tr\Big(
  \big(W^{(\epsilon_1, \eta_1)}\big)_{i_1i_{-1}} \cdots
  \big(W^{(\epsilon_n, \eta_n)})_{i_ni_{-n}}\Big) \\ &=&
  d_1^{-n} \mathop{\sum_{j_{\pm 1}, \dots, j_{\pm n}=1}}_ {j
    = j \circ \epsilon \gamma\delta\gamma^{-1} \epsilon}
  ^{d_1} \Tr\Big( W(j_1, j_{-1})^{(\eta_1)} \cdots
  W(j_n,j_{-n}) ^{(\eta_n)}\Big).
\end{eqnarray*} 
To achieve the last step we let $j = i \circ \epsilon$. Also
we have adopted the convention that for $A$ a matrix in
$M_{d_2}(\bC)$ we let $A^{(1)} = A$ and $A^{(-1)} = A^\str$.

Next we must expand $\Tr\Big(
W(j_1, j_{-1})^{(\eta_1)} \cdots 
W(j_n,j_{-n}) ^{(\eta_n)}\Big)$. 

\begin{eqnarray*}\lefteqn{
\Tr\big( W(j_1, j_{-1})^{(\eta_1)} \cdots 
W(j_n, j_{-n})^{(\eta_n)}\big) } \\
& = &
\mathop{\sum_{r_{\pm 1}, \dots , r_{\pm n}=1}}_
{r = r \circ \gamma\delta\gamma^{-1}}^{d_2}
\big(W(j_1, j_{-1})^{(\eta_1)})_{r_1r_{-1}} \cdots 
\big(W(j_n, j_{-n})^{(\eta_n)}\big)_{r_nr_{-n}} \\
&=&
\mathop{\sum_{s_{\pm 1}, \dots , s_{\pm n}=1}}_
{s = s \circ \eta\gamma\delta\gamma^{-1}\eta}^{d_2}
\big(W(j_1, j_{-1})\big)_{s_1s_{-1}} \cdots 
\big(W(j_n, j_{-n})\big)_{s_ns_{-n}} \\
& = & d_2^{-n}
\sum_{s_{\pm 1}, \dots , s_{\pm n}}
\big(G_{j_1}G_{j_{-1}}^*\big)_{s_1s_{-1}} \cdots
\big(G_{j_n}G_{j_{-n}}^*\big)_{s_ns_{-n}} \\
& = & d_2^{-n}
\mathop{\sum_{s_{\pm 1}, \dots , s_{\pm n}}}
\sum_{t_1, \dots, t_n=1}^p
g^{(j_1)}_{s_1t_1} \overline{g^{(j_{-1})}_{s_{-1}t_1}}
\cdots
g^{(j_n)}_{s_nt_n} \overline{g^{(j_{-n})}_{s_{-n}t_n}}
\end{eqnarray*}
hence the conclusion.
\end{proof}

Next we need to compute $\E( g^{(j_1)}_{s_1t_1} \cdots
g^{(j_n)}_{s_nt_n} \ \overline{g^{(j_{-1})}_{s_{-1}t_1}}
\cdots \overline{g^{(j_{-n})}_{s_{-n}t_n}})$. We shall use
the complex form of Wick's rule that says that if $g_1,
\dots, g_m$ are independent Gaussian random variables and
$\alpha_1, \dots, \alpha_n, \beta_1, \dots, \beta_n \in [m]$
then $\E(g_{\alpha(1)} \cdots g_{\alpha(n)}
\overline{g_{\beta(1)}} \cdots \overline{g_{\beta(n)}})$ is
the number of permutations $\sigma \in S_n$ such that for
all $k \in [n]$ we have $\beta(k) = \alpha(\sigma(k))$; see
Janson \cite[page 13]{janson}.

Thus we let $g_{\alpha(k)} = g_{s_kt_k}^{(j_k)}$ and
$g_{\beta(k)} = g_{s_{-k}t_k}^{(j_{-k})}$. So if $\sigma \in
S_n$ and $\beta(k) = \alpha(\sigma(k))$ we have
\begin{equation}\label{eq:conditions}
\begin{cases}
s(-k) = s(\sigma(k)) & \textrm{for } k > 0   \\
j(-k) = j(\sigma(k)) & \textrm{for } k > 0   \\
t = t(\sigma(k))     & \textrm{for } k > 0
\end{cases}
\end{equation}

We wish to write the first two conditions as an equation
involving functions on $[\pm n]$.

\begin{lemma}\label{lemma:composition}
Let $\sigma \in S_n$ and $j : [\pm n] \rightarrow [d]$.  We
have $j(-k) = j(\sigma(k))$ for all $k > 0$ if and only if
$j = j \circ \sigma\delta\sigma^{-1}$.
\end{lemma}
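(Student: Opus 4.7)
The plan is to compute the conjugation $\sigma\delta\sigma^{-1}$ explicitly on $[\pm n]$, exploiting the convention that $\sigma$ fixes the negative half, and then to unpack the functional equation $j = j\circ\sigma\delta\sigma^{-1}$ into a list of pointwise identities that can be matched against the hypothesis.

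First I would split the computation according to the sign of the argument. For $k > 0$, the element $\sigma^{-1}(k)$ lies in $[n]$, so $\delta\sigma^{-1}(k) = -\sigma^{-1}(k)$ is negative; since $\sigma$ acts as the identity on negative indices, this gives $\sigma\delta\sigma^{-1}(k) = -\sigma^{-1}(k)$. For $-k$ with $k > 0$, one has $\sigma^{-1}(-k) = -k$, then $\delta(-k) = k$, and finally $\sigma(k) \in [n]$, yielding $\sigma\delta\sigma^{-1}(-k) = \sigma(k)$. In particular $\sigma\delta\sigma^{-1}$ interchanges the positive and negative halves of $[\pm n]$.

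With these formulas the statement $j = j\circ\sigma\delta\sigma^{-1}$ breaks into two families of equations, one for each sign: $j(k) = j(-\sigma^{-1}(k))$ for all $k > 0$, and $j(-k) = j(\sigma(k))$ for all $k > 0$. The second family is exactly the hypothesis. The first family follows from the second by reindexing: setting $\ell = \sigma^{-1}(k)$ in the hypothesis $j(-\ell) = j(\sigma(\ell))$ gives $j(-\sigma^{-1}(k)) = j(k)$. Since the two substitutions $k \mapsto \sigma(k)$ and $k \mapsto \sigma^{-1}(k)$ are mutually inverse bijections of $[n]$, the two families of equations are equivalent, which proves both directions of the biconditional simultaneously.

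I do not anticipate any obstacle; this is a bookkeeping lemma whose role is to repackage a list of $n$ coordinatewise constraints into a single equation involving a permutation of $[\pm n]$, so that later it can be combined cleanly with the parallel conditions involving $\epsilon$, $\eta$, $\gamma$ and $\delta$ that arise in the index sums of the preceding expansion.
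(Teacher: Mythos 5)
Your proof is correct and takes essentially the same approach as the paper's: both verify the identity $j = j\circ\sigma\delta\sigma^{-1}$ pointwise on the positive and negative halves of $[\pm n]$ and show that the hypothesis $j(-k)=j(\sigma(k))$ supplies both cases (the positive-argument case by the reindexing $k\mapsto\sigma^{-1}(k)$). Your presentation of first computing $\sigma\delta\sigma^{-1}$ explicitly is a harmless reorganization of the same bookkeeping.
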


\begin{proof}
Suppose that for $k > 0$ we have $j(-k) =
j(\sigma(k))$. Then for $k > 0$ we have $j\circ \sigma
\delta \sigma^{-1}(k) = j(-\sigma^{-1}(k)) = j(-l) =
j(\sigma(l)) = j(k)$ where $l = \sigma^{-1}(k) > 0$. Also $j
\circ \sigma \delta \sigma ^{-1}(-k) = j(\sigma(k)) =
j(-k)$. Thus $j = j \circ \sigma \gamma \sigma^{-1}$.

Now suppose that $j = j \circ \sigma \delta
\sigma^{-1}$. For $k > 0$ we have that $j(-k) = j\circ
\sigma\delta \sigma^{-1}(-k) = j(\sigma(k))$ as claimed.
\end{proof}

\begin{lemma}\label{lemma:expectation}
$\E(g^{(j_1)}_{s_1t_1} \cdots g^{(j_n)}_{s_nt_n}
  \ \overline{g^{(j_{-1})}_{s_{-1}t_1}} \cdots
  \overline{g^{(j_{-n})}_{s_{-n}t_n}}) = |\{\sigma \in S_n
  \mid j = j \circ \sigma \delta \sigma^{-1}$, $s = s\circ
  \sigma \delta \sigma^{-1}$, and $t = t \circ \sigma\}|$.
\end{lemma}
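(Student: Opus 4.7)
The plan is to invoke the complex Wick formula as recalled just above the lemma (following Janson), with the $g$'s reindexed by triples of integers. Concretely, I would define $\alpha,\beta\colon[n]\to\mathbb{N}^3$ by $\alpha(k)=(j_k,s_k,t_k)$ and $\beta(k)=(j_{-k},s_{-k},t_k)$, so that $g^{(j_k)}_{s_kt_k}=g_{\alpha(k)}$ and $g^{(j_{-k})}_{s_{-k}t_k}=g_{\beta(k)}$. Since all entries $g^{(i)}_{j\ell}$ for distinct triples $(i,j,\ell)$ are independent centered complex Gaussians of unit variance, the hypotheses of Wick's rule are satisfied, and the expectation in the lemma equals the number of $\sigma\in S_n$ with $\beta(k)=\alpha(\sigma(k))$ for every $k\in[n]$.

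Next I would read off componentwise what $\beta(k)=\alpha(\sigma(k))$ means: it is precisely the three scalar conditions listed in (\ref{eq:conditions}), namely, for each $k>0$,
\[
j(-k)=j(\sigma(k)),\qquad s(-k)=s(\sigma(k)),\qquad t_k=t_{\sigma(k)}.
\]
The first two conditions are asserted only on positive indices, but Lemma \ref{lemma:composition} gives them an equivalent form as equations on $[\pm n]$: $j=j\circ\sigma\delta\sigma^{-1}$ and $s=s\circ\sigma\delta\sigma^{-1}$. The third condition, being already an equation on $[n]$, rewrites directly as $t=t\circ\sigma$. Collecting these three equalities recovers the claimed counting set, which finishes the argument.

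The proof is essentially bookkeeping, so I do not expect a genuine obstacle. The only place to be careful is the reindexing step: one must verify that the triple $(j,s,t)$ really determines the random variable (so that matches of $\alpha$ with $\beta$ correspond bijectively to matches of random variables), and that the Wick sum over $\sigma$ yields $1$ exactly when every factor $\E(g_{\alpha(\sigma(k))}\overline{g_{\beta(k)}})$ equals $1$ and $0$ otherwise. Both follow immediately from the independence hypothesis and from $\E(|g^{(i)}_{j\ell}|^2)=1$, $\E((g^{(i)}_{j\ell})^2)=0$. After that, the translation via Lemma \ref{lemma:composition} is automatic.
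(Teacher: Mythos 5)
Your argument is exactly the paper's: apply the complex Wick formula with the Gaussians indexed by the triples $(j_k,s_k,t_k)$, read off the componentwise matching conditions of Equation (\ref{eq:conditions}), and convert the first two into equations on $[\pm n]$ via Lemma \ref{lemma:composition}. The proposal is correct and matches the paper's proof in both substance and structure.
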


\begin{proof}
By Lemma \ref{lemma:composition} and Equation
(\ref{eq:conditions}) we have to count the number of
permutations $\sigma$ such that $j = j \circ \sigma
\delta \sigma^{-1}$ on the set $[d_1]$, $s = s\circ \sigma
\delta \sigma^{-1}$ on the set $[d_2]$, and $t = t \circ \sigma$
on the set $[p]$.
\end{proof}

\begin{remark}
In the proposition below we use the notation $\#(\sigma)$ to
denote the number of cycles in the cycle decomposition of
$\sigma$. If $\sigma$ and $\pi$ are permutations we let
$\sigma \vee \pi$ be the partition obtained by regarding
$\sigma$ and $\pi$ as partitions where the blocks of the
partition are the cycles of the permutation. Now $\sigma
\vee \pi$ denotes the supremum of the two partitions in the
lattice of partitions. Recall that the function $\sigma
\mapsto \#(\sigma)$ is a central function on $S_n$, so
$\#(\pi^{-1}\sigma \pi) = \#(\sigma)$ for all $\pi$ and
$\sigma$.
\end{remark}

\begin{proposition}\label{prop:partial_exp}
Subject to the conditions $j = j \circ\epsilon \gamma\delta
\gamma^{-1} \epsilon$ and $s = s \circ \eta\gamma\delta
\gamma^{-1}\eta$,
\begin{eqnarray*}\lefteqn{%
\sum_{j_{\pm 1}, \dots, j_{\pm n}} \sum_{s_{\pm 1}, \dots,
  s_{\pm n}} \sum_{t_1, \dots, t_n} \E( g^{(j_1)}_{s_1t_1}
\cdots g^{(j_n)}_{s_nt_n}
\ \overline{g^{(j_{-1})}_{s_{-1}t_1}} \cdots
\overline{g^{(j_{-n})}_{s_{-n}t_n}})}\\ & = & \sum_{\sigma
    \in S_n} d_1^{\#(\epsilon \gamma\delta \gamma^{-1}
    \epsilon \myvee   \sigma \delta \sigma^{-1})} d_2^{\#(\eta
    \gamma\delta \gamma^{-1} \eta \myvee \sigma \delta
    \sigma^{-1})} p^{\#(\sigma)}
\end{eqnarray*}
\end{proposition}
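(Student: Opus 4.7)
The plan is to apply Lemma \ref{lemma:expectation} to replace the expectation inside the triple sum by the integer count it equals, then interchange the order of summation so that $\sigma \in S_n$ becomes the outer variable. After this swap, the inner quantity is simply the number of triples $(j, s, t)$ that simultaneously satisfy all of the invariance conditions, and I would factor this count as a product of independent contributions from $j$, $s$, and $t$.

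For the $j$ factor, combine the outer constraint $j = j \circ \epsilon\gamma\delta\gamma^{-1}\epsilon$ with the constraint $j = j \circ \sigma\delta\sigma^{-1}$ supplied by Lemma \ref{lemma:expectation}. A function $j:[\pm n] \to [d_1]$ is invariant under a pair of permutations $\alpha_1, \alpha_2$ if and only if it is constant on every orbit of the subgroup $\langle \alpha_1, \alpha_2 \rangle$, and these orbits are exactly the blocks of the join $\alpha_1 \myvee \alpha_2$ in the partition lattice on $[\pm n]$ (the join is defined via the cycle partitions). Hence the number of admissible $j$ equals $d_1^{\#(\epsilon\gamma\delta\gamma^{-1}\epsilon\,\myvee\,\sigma\delta\sigma^{-1})}$. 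The identical reasoning applied to the $s$-variables yields the factor $d_2^{\#(\eta\gamma\delta\gamma^{-1}\eta\,\myvee\,\sigma\delta\sigma^{-1})}$, and for $t:[n]\to[p]$ the single condition $t = t\circ \sigma$ forces $t$ to be constant on the cycles of $\sigma$, contributing $p^{\#(\sigma)}$ choices.

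Multiplying the three factors and summing over $\sigma \in S_n$ gives exactly the right-hand side of the proposition. There is no real obstacle here, since the substantive combinatorics has already been isolated in Lemma \ref{lemma:expectation}; the only point that deserves an explicit sentence is the passage from ``simultaneously invariant under two permutations $\alpha_1, \alpha_2$'' to ``constant on the blocks of $\alpha_1 \myvee \alpha_2$'', which follows from the fact that two elements of $[\pm n]$ lie in the same orbit of $\langle \alpha_1, \alpha_2\rangle$ precisely when they lie in a common block of the join of the corresponding cycle partitions. Everything else is bookkeeping.
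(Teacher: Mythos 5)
Your proposal is correct and follows essentially the same route as the paper: apply Lemma \ref{lemma:expectation}, swap the order of summation so $\sigma$ is outermost, and count the admissible $(j,s,t)$ by observing that simultaneous invariance under two permutations is equivalent to being constant on the blocks of their join, giving the three factors $d_1^{\#(\cdot)}$, $d_2^{\#(\cdot)}$, $p^{\#(\sigma)}$.
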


\begin{proof}
According to Lemma \ref{lemma:expectation}, subject to the
conditions $j = j \circ\epsilon \gamma\delta \gamma^{-1}
\epsilon$ and $s = s \circ \eta\gamma\delta \gamma^{-1}\eta$,
\begin{eqnarray*}\lefteqn{%
\sum_{j_{\pm 1}, \dots, j_{\pm n}} \sum_{s_{\pm 1}, \dots,
  s_{\pm n}} \sum_{t_1, \dots, t_n} \E( g^{(j_1)}_{s_1t_1}
\cdots g^{(j_n)}_{s_nt_n}
\ \overline{g^{(j_{-1})}_{s_{-1}t_1}} \cdots
\overline{g^{(j_{-n})}_{s_{-n}t_n}})} \\ 
& = & \kern-1em
\mathop{\mathop{\sum_{j_{\pm 1}, \dots, j_{\pm n}}}_%
    {s_{\pm 1}, \dots, s_{\pm n}}}_%
    {t_1, \dots, t_n}
|\{\sigma \in S_n \mid 
j = j \circ \sigma \delta\sigma^{-1}, 
s = s\circ \sigma \delta \sigma^{-1},
  \mathrm{\ and\ } 
t = t \circ \sigma\}| \\ 
& = &
\sum_{\sigma \in S_n} |\{ (j,s, t) \mid 
j = j \circ \sigma \delta \sigma^{-1}, 
s = s\circ \sigma \delta \sigma^{-1}, 
\mathrm{\ and\ } t = t \circ \sigma\}| \\ 
& = &
  \sum_{\sigma \in S_n} d_1^{\#(\epsilon \gamma\delta
    \gamma^{-1} \epsilon \myvee \sigma\delta\sigma^{-1})}
  d_2^{\#(\eta \gamma\delta \gamma^{-1} \eta \myvee
    \sigma\delta\sigma^{-1})} p^{\#(\sigma)}.
\end{eqnarray*}
To get the last equality we recall that the condition on $j$
is that it must simultaneously satisfy $j = j \circ\epsilon
\gamma\delta \gamma^{-1} \epsilon$ and $j = j \circ
\sigma \delta \sigma^{-1}$. So $j$ must be constant on the
cycles of $\epsilon \gamma\delta \gamma^{-1} \epsilon$ and
of $\sigma \delta \sigma^{-1}$; so $j$ must be constant on
the blocks of $\epsilon \gamma\delta \gamma^{-1} \epsilon
\vee \sigma \delta \sigma^{-1}$. The same argument applies to
$s$. The only condition on $t$ is that $ t = t \circ
\sigma$.
\end{proof}

\begin{theorem}\label{thm:exp_mixed_mom}
\[
\E(\tr \otimes \tr(W^{(\epsilon_1, \eta_1)} \cdots
W^{(\epsilon_n, \eta_n)}))
= 
\sum_{\sigma\in S_n}
\bigg(\frac{p}{d_1d_2}\bigg)^{\#(\sigma)}
d_1^{\ f_\epsilon(\sigma)}
d_2^{\ f_\eta(\sigma)}
\]
where $f_\epsilon(\sigma) = \#( \epsilon \gamma\delta
\gamma^{-1} \epsilon \vee \sigma \delta \sigma^{-1} ) +
\#(\sigma) - (n + 1)$ and $f_\eta(\sigma) = \#( \eta
\gamma\delta \gamma^{-1} \eta \vee \sigma \delta
\sigma^{-1})+ \#(\sigma) - (n + 1)$
\end{theorem}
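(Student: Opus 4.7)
The plan is essentially to read off the identity by combining the two results already in hand: the Wick expansion recorded in Remark~(\ref{eq:expansion}), which expresses $\E(\Tr\otimes\Tr(W^{(\epsilon_1,\eta_1)}\cdots W^{(\epsilon_n,\eta_n)}))$ as $(d_1d_2)^{-n}$ times a large sum of Gaussian moments, and Proposition~\ref{prop:partial_exp}, which evaluates that sum in closed form as $\sum_{\sigma\in S_n}d_1^{\#(\epsilon\gamma\delta\gamma^{-1}\epsilon\vee\sigma\delta\sigma^{-1})}d_2^{\#(\eta\gamma\delta\gamma^{-1}\eta\vee\sigma\delta\sigma^{-1})}p^{\#(\sigma)}$. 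Chaining these two steps already produces $\E(\Tr\otimes\Tr(\cdots))$ up to the prefactor $(d_1d_2)^{-n}$.

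Next I would pass to the normalized trace. Since $\tr\otimes\tr=(d_1d_2)^{-1}\Tr\otimes\Tr$, moving to $\E(\tr\otimes\tr(\cdots))$ introduces an additional factor $(d_1d_2)^{-1}$, so the total external factor becomes $(d_1d_2)^{-(n+1)}$. Distributing this factor over the summands, the exponents of $d_1$ and $d_2$ each pick up a summand of $-(n+1)$.

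The final step is purely cosmetic bookkeeping: to isolate the free-cumulant-like prefactor $(p/(d_1d_2))^{\#(\sigma)}$, I would rewrite $p^{\#(\sigma)}=(p/(d_1d_2))^{\#(\sigma)}(d_1d_2)^{\#(\sigma)}$ and absorb the extra $d_1^{\#(\sigma)}$ and $d_2^{\#(\sigma)}$ into the respective exponents. The $d_1$-exponent then becomes $\#(\epsilon\gamma\delta\gamma^{-1}\epsilon\vee\sigma\delta\sigma^{-1})+\#(\sigma)-(n+1)=f_\epsilon(\sigma)$, and symmetrically for $d_2$, producing exactly the asserted formula.

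There is really no obstacle here: the theorem is a direct consequence of Proposition~\ref{prop:partial_exp} once one tracks the normalization. The only tiny point worth flagging is to verify that $\#(\pi^{-1}\sigma\pi)=\#(\sigma)$ (already noted in the remark) is not needed for this step, and that the $(d_1d_2)^{-n}$ in the expansion combines with the $(d_1d_2)^{-1}$ from normalization to give the expected $-(n+1)$ shift in both $f_\epsilon$ and $f_\eta$, which is what forces the symmetric form of the two exponents.
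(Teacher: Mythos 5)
Your proposal is correct and follows exactly the paper's own proof: combine the expansion in Remark~(\ref{eq:expansion}) with Proposition~\ref{prop:partial_exp} to get the prefactor $(d_1d_2)^{-(n+1)}$, then rewrite $p^{\#(\sigma)}=\big(\frac{p}{d_1d_2}\big)^{\#(\sigma)}(d_1d_2)^{\#(\sigma)}$ and absorb $d_1^{\#(\sigma)}$ and $d_2^{\#(\sigma)}$ into the exponents to obtain $f_\epsilon(\sigma)$ and $f_\eta(\sigma)$. Nothing further is needed.
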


\begin{proof}
According to Equation (\ref{eq:expansion}) and Proposition
\ref{prop:partial_exp} we have
\begin{eqnarray*}\lefteqn{%
\E(\tr \otimes \tr(W^{(\epsilon_1, \eta_1)} \cdots
W^{(\epsilon_n, \eta_n)}))} \\ & = &
  \Big(\frac{1}{d_1d_2}\Big)^{n + 1} \sum_{\sigma \in S_n}
  d_1^{\#(\epsilon \gamma\delta \gamma^{-1} \epsilon\myvee
    \sigma \delta \sigma^{-1})} d_2^{\#(\eta \gamma\delta
    \gamma^{-1} \eta \myvee \sigma\delta\sigma^{-1})}
  p^{\#(\sigma)} \\ & = & \sum_{\pi \in S_n} \Big(
  \frac{p}{d_1d_2}\Big)^{\#(\sigma)} d_1^{\#(\epsilon
    \gamma\delta \gamma^{-1} \epsilon \,\myvee\,
    \sigma\delta\sigma^{-1}) + \#(\sigma) - (n+1)} \\ & &
  \qquad\qquad\qquad\mbox{}\times d_2^{\#(\eta \gamma\delta
    \gamma^{-1} \eta \myvee \sigma\delta\sigma^{-1}) +
    \#(\sigma) - (n+1)} \\ & = & \sum_{\sigma\in S_n}
  \bigg(\frac{p}{d_1d_2}\bigg)^{\#(\sigma)}
  d_1^{\ f_\epsilon(\sigma)} d_2^{\ f_\eta(\sigma)}
\end{eqnarray*}
\end{proof}


\section{Asymptotics of Permutations}
\label{sec:asymptotics-of-permutations}

Theorem \ref{thm:exp_mixed_mom} gave us an expansion of
mixed moments of $\{W, W^\ltr, W^\rtr,\ab W^\str\}$ as a sum
over the symmetric group. We now have to determine which
permutations contribute to the limit. We shall show that for
all $\epsilon$ and all $\sigma$, $f_\epsilon(\sigma) \leq 0$
and determine for which $\sigma$ equality is achieved. Our
first goal is to show that $f_\epsilon(\sigma) < 0$ unless
$\epsilon$ is constant on the cycles of $\sigma$. Since
$\epsilon$ is arbitrary, whatever we show for $\epsilon$
will apply to $\eta$.

There is a fundamental equation that we shall frequently use
in what follows. Given a subgroup, $G$, of the group $S_n$
of permutations of $[n]$, we shall say that the subgroup
acts transitively on $[n]$ if given $k, l\in [n]$ we can
find $\rho \in G$ such that $\rho(k) = l$.

Given two permutations $\pi$ and $\sigma$ of $S_n$ such that
the subgroup generated $\pi$ and $\sigma$ acts transitively
there is a non-negative integer $g$ such that
\begin{equation}\label{eq:eulers_formula}
\#(\pi) + \#(\pi^{-1}\sigma)  + \#(\sigma) = n + 2(1 - g)
\end{equation}

Recall that a pairing of $[n]$ is a partition $\pi$ of $[n]$
with all blocks of size 2; note this implies that $n$ is
even. The set of all pairings of $[n]$ is denoted
$\cP_2(n)$. We shall also regard such a $\pi$ as the
permutation whose cycles are the blocks of $\pi$. In this
case $\pi$ has no fixed points and $\pi^2 = id$. In
\cite[Lemma 2]{mp} we proved the following.

\begin{lemma}\label{lemma:pairingproduct}
Let $\pi, \sigma \in \cP_2(n)$ be pairings and $(i_1, i_2,
\dots, i_k)$ a cycle of $\pi\sigma$. Let $j_r =
\sigma(i_r)$. Then $(j_k, j_{k-1}, \dots , j_1)$ is also a
cycle of $\pi\sigma$, and these two cycles are distinct;
$\{i_, \dots, i_k, j_1,\dots, j_k\}$ is a block of $\pi \vee
\sigma$ and all are of this form; $2\#(\pi \vee \sigma) =
\#(\pi\sigma)$. The cycle decomposition of $\pi\sigma$ can
be written $c_1 c'_1 \cdots c_k c'_k$ where $c'_i = \sigma
c_i^{-1} \sigma$. With this notation the blocks of $\pi \vee
\sigma$ are $c_i \cup c'_i$.
\end{lemma}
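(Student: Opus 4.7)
The plan is to exploit the fact that $\pi$ and $\sigma$ are fixed-point-free involutions, which yields the key identity $(\pi\sigma)^{-1} = \sigma\pi = \sigma(\pi\sigma)\sigma^{-1}$. In other words, conjugation by $\sigma$ sends $\pi\sigma$ to its own inverse, and hence permutes the cycles of $\pi\sigma$ while reversing their orientation. This single observation drives the entire proof.

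Applied to $c = (i_1, \dots, i_k)$, the conjugation formula gives $\sigma c \sigma^{-1} = (\sigma(i_1), \dots, \sigma(i_k)) = (j_1, \dots, j_k)$, which by the identity is a cycle of $(\pi\sigma)^{-1}$. Its reverse $c' := (j_k, j_{k-1}, \dots, j_1)$ is therefore a cycle of $\pi\sigma$, and the formula $c' = \sigma c^{-1} \sigma$ falls out by rearranging the conjugation identity together with $\sigma = \sigma^{-1}$.

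To show that $c \neq c'$ and that $\{i_1, \dots, i_k, j_1, \dots, j_k\}$ is precisely a block of $\pi \vee \sigma$, I would switch to the graph picture. Viewing $\pi$ and $\sigma$ as perfect matchings on $[n]$, their union is a $2$-regular multigraph, so a disjoint union of even cycles, and each such cycle is exactly a block of $\pi \vee \sigma$. Starting at $i_1$ and walking alternately along $\sigma$- and $\pi$-edges produces the sequence $i_1, j_1, i_2, j_2, \dots, i_k, j_k$, because $\pi(j_r) = \pi\sigma(i_r) = i_{r+1}$. The walk returns to $i_1$ after exactly $2k$ steps — this is the period of $\pi\sigma$ starting from $i_1$ — so the $2k$ elements are pairwise distinct and exhaust the ambient block. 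In particular $\{i_1,\dots,i_k\}$ and $\{j_1,\dots,j_k\}$ are disjoint, giving $c \neq c'$, and since every element of $[n]$ lies on some such alternating cycle, every block of $\pi \vee \sigma$ arises this way.

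The remaining assertions are then immediate bookkeeping: grouping the cycles of $\pi\sigma$ according to the block of $\pi \vee \sigma$ that contains them yields the decomposition $c_1 c'_1 \cdots c_k c'_k$ with $c'_i = \sigma c_i^{-1} \sigma$, and since each block contributes exactly two cycles, one obtains $2\#(\pi \vee \sigma) = \#(\pi\sigma)$. The step I expect to be most delicate is the simultaneous verification that the alternating walk visits $2k$ distinct vertices and completely exhausts its block; the alternating-matching-graph viewpoint is the engine that makes both facts transparent at once, and once that picture is in place the rest of the lemma unwinds.
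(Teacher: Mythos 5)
Your proof is correct. Note that the paper itself gives no proof of this lemma (it is quoted from \cite[Lemma 2]{mp}); your argument --- the observation that conjugation by the involution $\sigma$ carries $\pi\sigma$ to $(\pi\sigma)^{-1}$, combined with the alternating $\sigma$/$\pi$-walk in the $2$-regular union multigraph to identify each block of $\pi\vee\sigma$ with the disjoint supports of a cycle and its partner --- is the standard proof of this fact, and you correctly isolate the one delicate point (that the first return of the walk to $i_1$ occurs at an even step, so the $2k$ vertices are distinct and exhaust the component).
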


\begin{lemma}\label{lemma:epsilon_constant}
Let $\sigma \in S_n$ and $\epsilon \in \bZ_2^n$ be given,
then $\epsilon$ is constant on the cycles of $\sigma$ if and
only if $\epsilon \delta \sigma \delta \sigma^{-1} \epsilon
[n] = [n]$.
\end{lemma}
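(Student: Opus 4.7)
The plan is to evaluate $\alpha := \epsilon\delta\sigma\delta\sigma^{-1}\epsilon$ explicitly on an element $k \in [n]$ and then read off when $\alpha(k) \in [n]$. First I would collect three bookkeeping facts from the definitions: the extension convention makes $\sigma^{\pm 1}$ act as the identity on every negative index; $\delta$ toggles the sign of any index; and $\epsilon$ fixes $k$ when $\epsilon_k = +1$ and sends $k$ to $-k$ when $\epsilon_k = -1$. Starting from $k>0$ and applying the six factors $\epsilon,\sigma^{-1},\delta,\sigma,\delta,\epsilon$ in turn, a short case split (tracking after each step whether the current index is positive or negative) yields
\[
\alpha(k) \;=\; \begin{cases} \epsilon_{\sigma^{-1}(k)}\,\sigma^{-1}(k) & \text{if } \epsilon_k = +1, \\ -\epsilon_{\sigma(k)}\,\sigma(k) & \text{if } \epsilon_k = -1. \end{cases}
\]
Consequently $\alpha(k) \in [n]$ is equivalent to $\epsilon_{\sigma^{-1}(k)} = +1$ in the first case and to $\epsilon_{\sigma(k)} = -1$ in the second.

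For the ``only if'' direction, suppose $\alpha[n] = [n]$ and fix any cycle $(a_1,\dots,a_r)$ of $\sigma$, with indices read cyclically. If some $a_i$ satisfies $\epsilon_{a_i} = +1$, the first case forces $\epsilon_{a_{i-1}} = \epsilon_{\sigma^{-1}(a_i)} = +1$, and iterating backwards one step at a time around the cycle gives $\epsilon_{a_j} = +1$ for every $j$. Dually, if some $\epsilon_{a_i} = -1$, the second case forces $\epsilon_{a_{i+1}} = -1$ and iterating forward around the cycle makes $\epsilon$ identically $-1$ there. Since $\epsilon_{a_i}\in\{\pm1\}$, exactly one alternative occurs, and $\epsilon$ is constant on the cycle.

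For the converse, assume $\epsilon$ is constant on each cycle of $\sigma$, so that $\epsilon_{\sigma(k)} = \epsilon_{\sigma^{-1}(k)} = \epsilon_k$ for every $k$. Substituting into the formula above collapses $\alpha(k)$ to $\sigma^{-1}(k)$ when $\epsilon_k = +1$ and to $\sigma(k)$ when $\epsilon_k = -1$; both lie in $[n]$. Hence $\alpha[n] \subseteq [n]$, and since $\alpha$ is a bijection of the finite set $[\pm n]$ the inclusion must be an equality. The only real obstacle is the careful sign bookkeeping through the six-fold composition; once the explicit formula for $\alpha(k)$ is in hand, both directions reduce to a one-line cycle-propagation argument.
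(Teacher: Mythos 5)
Your proof is correct and takes essentially the same approach as the paper: both compute the explicit two-case formula for $\epsilon \delta \sigma \delta \sigma^{-1} \epsilon(k)$ on positive $k$ (using that $\sigma^{\pm1}$ fixes negative indices) and then read off the equivalence from the sign of the result. The only differences are cosmetic: your ``only if''/``if'' labels are swapped relative to the statement of the lemma, and you spell out the cycle-propagation and the bijectivity step that the paper leaves implicit.
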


\begin{proof}
We begin by noting that for $k > 0$
\begin{equation}\label{eq:sigma_epsilon}
\epsilon \delta \sigma \delta \sigma^{-1} \epsilon (k) =
\begin{cases}
\epsilon \delta \sigma \delta \sigma^{-1}(k) & \epsilon_k = 1 \\
\epsilon \delta \sigma(k) & \epsilon_k = -1
\end{cases}
= \begin{cases}
\epsilon_{\sigma^{-1}(k)} \sigma^{-1}(k) & \epsilon_k = 1 \\
-\epsilon_{\sigma(k)} \sigma(k) & \epsilon_k = -1
\end{cases}.
\end{equation}
Suppose $\epsilon$ is constant on the cycles of
$\sigma$. Then 
\[
\epsilon \delta \sigma \delta \sigma^{-1} \epsilon (k) =
\begin{cases}
\sigma^{-1}(k) & \epsilon_k = 1 \\
\sigma(k) & \epsilon_k = -1 \\
\end{cases} \in [n].
\]

Conversely suppose that $\epsilon \delta \sigma \delta
\sigma^{-1} \epsilon [n] = [n]$. Then for $k > 0$ by
Equation (\ref{eq:sigma_epsilon})
\[
\epsilon \delta \sigma \delta \sigma^{-1} \epsilon (k)
= \begin{cases}
\epsilon_{\sigma^{-1}(k)} \sigma^{-1}(k) & \epsilon_k = 1 \\
-\epsilon_{\sigma(k)} \sigma(k) & \epsilon_k = -1
\end{cases}
\]
and thus $\epsilon_{\sigma(k)}$ and $\epsilon_k$ have the same sign. 
\end{proof}

\begin{lemma}\label{lemma:negativity_of_f}
Let $\sigma \in S_n$ and $\epsilon \in \bZ_2^n$ be given
then $f_\epsilon(\sigma) < 0$ unless $\epsilon$ is constant
on the cycles of $\sigma$.
\end{lemma}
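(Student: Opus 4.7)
The plan is to bound $f_\epsilon(\sigma)$ by applying Euler's formula (\ref{eq:eulers_formula}) to the pair of permutations $\pi_1\delta$ and $\pi_2\delta$, where I abbreviate $\pi_1 = \epsilon\gamma\delta\gamma^{-1}\epsilon$ and $\pi_2 = \sigma\delta\sigma^{-1}$. Both $\pi_1$ and $\pi_2$ are pairings of $[\pm n]$, so Lemma \ref{lemma:pairingproduct} gives $\#(\pi_1\pi_2) = 2\#(\pi_1 \vee \pi_2)$, which is exactly the quantity appearing in $f_\epsilon(\sigma)$.

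First I would count the cycles of each factor. Because $\epsilon$ commutes with $\delta$, one has $\pi_1\delta = \epsilon(\gamma\delta\gamma^{-1}\delta)\epsilon$; and a direct computation shows $\gamma\delta\gamma^{-1}\delta$ preserves $[n]$ (where it acts as $\gamma$) and $-[n]$ (where it acts as $\delta\gamma^{-1}\delta$), so $\#(\pi_1\delta) = 2$, with the two orbits being $A := \epsilon([n])$ and $B := -A$. A parallel computation shows that $\pi_2\delta$ preserves $[n]$ (acting as $\sigma$) and $-[n]$ (acting as $\delta\sigma^{-1}\delta$), so $\#(\pi_2\delta) = 2\#(\sigma)$, with orbits the cycles $C_i$ of $\sigma$ on $[n]$ together with their negatives $-C_i$ on $-[n]$. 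Finally, $(\pi_1\delta)^{-1}(\pi_2\delta) = \delta\pi_1\pi_2\delta$ is conjugate to $\pi_1\pi_2$, so has $2\#(\pi_1 \vee \pi_2)$ cycles.

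Next, let $c := \#(\pi_1\delta \vee \pi_2\delta)$ be the number of orbits of the group generated by $\pi_1\delta$ and $\pi_2\delta$ on $[\pm n]$. Applying (\ref{eq:eulers_formula}) orbit-by-orbit and summing yields
\[
\#(\pi_1\delta) + \#(\pi_2\delta) + \#((\pi_1\delta)^{-1}(\pi_2\delta)) \leq 2n + 2c,
\]
which, after substituting the three cycle counts from the previous step, rearranges to $f_\epsilon(\sigma) \leq c - 2$.

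All that remains is to show $c \leq 2$ in general, with $c = 1$ whenever $\epsilon$ fails to be constant on some cycle of $\sigma$. Since $\pi_1\delta$ has only the two orbits $A$ and $B$, every element of $A$ lies in one block of the join and every element of $B$ in another, so $c \leq 2$; the two collapse into a single block exactly when some $\pi_2\delta$-orbit meets both $A$ and $B$. Noting that $A \cap [n] = \{k : \epsilon_k = 1\}$ and $B \cap [n] = \{k : \epsilon_k = -1\}$, a cycle $C_i \subset [n]$ of $\sigma$ straddles $A$ and $B$ precisely when $\epsilon$ is non-constant on $C_i$, so $c = 1$ in that case and hence $f_\epsilon(\sigma) \leq -1 < 0$. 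The main obstacle is recognizing that the right pair to feed into Euler's formula is $\pi_1\delta, \pi_2\delta$ rather than $\pi_1, \pi_2$ themselves; once that choice is made, the rigidity of their orbit structures on $[\pm n]$ makes both the bound $c \leq 2$ and its strict improvement in the desired case essentially automatic.
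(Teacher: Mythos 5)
Your proof is correct, and while it rests on the same engine as the paper's --- rewriting $2\#(\epsilon\gamma\delta\gamma^{-1}\epsilon \vee \sigma\delta\sigma^{-1})$ as a cycle count via Lemma \ref{lemma:pairingproduct} and then invoking the genus bound (\ref{eq:eulers_formula}) --- it routes through it in a genuinely different and somewhat cleaner way. The paper conjugates everything by $\epsilon$ so as to compare $\gamma\delta\gamma^{-1}\delta$ with $\epsilon\delta\sigma^{-1}\delta\sigma\epsilon$, and then needs Lemma \ref{lemma:epsilon_constant} to guarantee transitivity before (\ref{eq:eulers_formula}) can be applied with $g\geq 0$; transitivity only holds when $\epsilon$ is non-constant, so the argument is confined to that case from the outset. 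You instead keep $\pi_1\delta = \epsilon\gamma\delta\gamma^{-1}\delta\epsilon$ and $\pi_2\delta = \sigma\delta\sigma^{-1}\delta$ in their natural positions, apply the genus bound orbit-by-orbit on $c := \#(\pi_1\delta \vee \pi_2\delta)$ blocks, and obtain the uniform inequality $f_\epsilon(\sigma)\leq c-2$. Since $\pi_1\delta$ has exactly the two orbits $A=\epsilon[n]$ and $B=-A$, you get $c\leq 2$ for free (hence $f_\epsilon(\sigma)\leq 0$ always, which the paper only establishes separately in Proposition \ref{prop:sigma_non_crossing}), and $c=1$ exactly when some $\sigma$-cycle straddles $A$ and $B$, i.e.\ exactly when $\epsilon$ is non-constant on that cycle --- which at the same time re-derives the content of Lemma \ref{lemma:epsilon_constant} as an orbit statement rather than an algebraic identity. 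In short: same underlying machinery, but your orbit-counting refinement gives a one-shot bound covering both the constant and non-constant cases and bypasses the need for Lemma \ref{lemma:epsilon_constant} as a separate input.
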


\begin{proof}
Suppose $\epsilon$ is not constant on the cycles of
$\sigma$, then by Lemma \ref{lemma:epsilon_constant} we have
that $\epsilon \delta \sigma \delta \sigma^{-1} \epsilon
[n]$ meets $[-n]$. Both $\epsilon \gamma\delta \gamma^{-1}
\epsilon$ and $\sigma\delta\sigma^{-1}$ are pairings and as
permutations $\delta$ and $\epsilon$ commute. Thus by Lemma
\ref{lemma:pairingproduct}
\begin{eqnarray*}\lefteqn{%
2\#( \epsilon \gamma \delta \gamma^{-1} \epsilon \vee
\sigma \delta \sigma^{-1})} \\
& = &
\#(\epsilon \gamma \delta
\gamma^{-1} \epsilon \sigma \delta \sigma^{-1} ) =
\#(\gamma \delta \gamma^{-1} \delta \ \epsilon \delta
\sigma \delta \sigma^{-1} \epsilon) \\
& = &
\#( (\epsilon\delta \sigma^{-1}\delta \sigma\epsilon)^{-1} \gamma
\delta \gamma^{-1} \delta).
\end{eqnarray*}
Now $\#(\gamma \delta \gamma^{-1}\delta) = 2$ and
$\#(\epsilon \delta \sigma \delta \sigma^{-1} \epsilon) =
\#(\delta \sigma \delta \sigma^{-1})=
\#(\delta\sigma\delta) + \#(\sigma^{-1}) = 2
\#(\sigma)$. Hence by Equation (\ref{eq:eulers_formula})
there is $g \geq 0$ such that
\[
\#(\epsilon \delta \sigma^{-1} \delta \sigma \epsilon) +
\#( ( \epsilon \delta \sigma^{-1} \delta \sigma\epsilon)^{-1} 
\gamma \delta \gamma^{-1} \delta ) + 
\#(\gamma \delta \gamma^{-1}\delta) 
= 2n + 2(1 - g),
\]
and thus
\[ 
f_\epsilon(\sigma) = \#(\epsilon \gamma\delta\gamma^{-1}\epsilon
\vee \sigma\delta\sigma^{-1}) + \#(\sigma) - (n + 1) = -(g +
1) \leq -1.
\]
\end{proof}

\begin{lemma}\label{lemma:sigma_epsilon}
Suppose that $\sigma \in S_n$ and $\epsilon \in \bZ_2^n$ and
$ \epsilon$ is constant on the cycles of $\sigma$. Then
there is a permutation $\sigma_\epsilon \in S_n$ such that
$\epsilon \delta \sigma \delta \sigma^{-1} \epsilon = \delta
\sigma_\epsilon \delta \sigma_\epsilon^{-1}$. Moreover if
$\sigma = c_1 \cdots c_k$ is the cycle decomposition of
$\sigma$ then $\sigma_\epsilon = c_1^{\lambda_1} \cdots
c_k^{\lambda_k}$ where $\lambda_i$ is the (constant) value
of $\epsilon$ on the cycle $c_i$.
\end{lemma}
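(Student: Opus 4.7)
The plan is to prove the identity $\epsilon \delta \sigma \delta \sigma^{-1} \epsilon = \delta \sigma_\epsilon \delta \sigma_\epsilon^{-1}$ by taking $\sigma_\epsilon := c_1^{\lambda_1} \cdots c_k^{\lambda_k}$ as defined in the statement and verifying the equality of these two permutations of $[\pm n]$ directly, splitting into positive and negative indices. The key preliminary observation is that if $j \in [n]$ lies in the cycle $c_i$ of $\sigma$, then $\sigma_\epsilon(j) = c_i^{\lambda_i}(j) = \sigma^{\lambda_i}(j) = \sigma^{\epsilon_j}(j)$, since $\epsilon$ is constant on $c_i$ with value $\lambda_i = \epsilon_j$. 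Informally, $\sigma_\epsilon$ acts pointwise as $\sigma^{\epsilon_j}$, and that single fact is what makes the identity drop out.

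For the positive side I would invoke Equation (\ref{eq:sigma_epsilon}) from the proof of Lemma \ref{lemma:epsilon_constant}: for $j > 0$ the left-hand side equals $\epsilon_{\sigma^{-1}(j)} \sigma^{-1}(j)$ when $\epsilon_j = 1$ and $-\epsilon_{\sigma(j)} \sigma(j)$ when $\epsilon_j = -1$. The constancy of $\epsilon$ on the cycle through $j$ lets me replace $\epsilon_{\sigma^{\pm 1}(j)}$ by $\epsilon_j$, at which point the two subcases fuse into $\sigma^{-\epsilon_j}(j)$, and by the preliminary observation this is exactly $\sigma_\epsilon^{-1}(j)$. On the right-hand side, since $\sigma_\epsilon$ fixes $[-n]$ pointwise, a direct evaluation gives $\delta \sigma_\epsilon \delta \sigma_\epsilon^{-1}(j) = \sigma_\epsilon^{-1}(j)$, so both sides agree on $[n]$.

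For the negative side I would treat $-j$ with $j > 0$ analogously. Unwinding $\delta \sigma_\epsilon \delta \sigma_\epsilon^{-1}(-j)$, using that $\sigma_\epsilon$ fixes negatives, gives $-\sigma_\epsilon(j)$. The parallel case analysis for $\epsilon \delta \sigma \delta \sigma^{-1} \epsilon(-j)$, again collapsing $\epsilon_{\sigma^{\pm 1}(j)}$ to $\epsilon_j$ by cycle constancy, yields $-\sigma^{\epsilon_j}(j) = -\sigma_\epsilon(j)$. The only real obstacle is careful bookkeeping of the signs that $\epsilon$ produces when pushed past the interior $\delta \sigma \delta \sigma^{-1}$; the hypothesis that $\epsilon$ is constant on the cycles of $\sigma$ is precisely what allows the stray signs $\epsilon_{\sigma(j)}$ and $\epsilon_{\sigma^{-1}(j)}$ to be identified with $\epsilon_j$, fusing the two subcases into the single clean formula $\sigma^{\epsilon_j}(j)$.
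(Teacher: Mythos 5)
Your proposal is correct and follows essentially the same route as the paper: both rest on the case formula (Equation (\ref{eq:sigma_epsilon})) from the proof of Lemma \ref{lemma:epsilon_constant}, use the constancy of $\epsilon$ on cycles to collapse the two cases into $\sigma^{\mp\epsilon_j}(j)$, and match this against a direct evaluation of $\delta\sigma_\epsilon\delta\sigma_\epsilon^{-1}$. If anything, your write-up is slightly more complete, since the paper only records the agreement on $[n]$ and leaves the verification on $[-n]$ implicit.
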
 

\begin{proof}
In the proof of Lemma \ref{lemma:epsilon_constant} we showed
that when $\epsilon$ is constant on the cycles of $\sigma$
we have that for $k > 0$
\[
\epsilon \delta \sigma \delta \sigma^{-1} \epsilon (k)
=
\begin{cases}
\sigma^{-1}(k) & \epsilon_k = 1 \\
\sigma(k)      & \epsilon_k = -1
\end{cases}.
\]
Thus on a cycle of $\sigma$ on which $\epsilon = 1$ we have
$\sigma_\epsilon^{-1}$ and $\epsilon \delta \sigma \delta
\sigma^{-1} \epsilon$ agree and on a cycle on which
$\epsilon = -1$ we have $\sigma_\epsilon$ and $\epsilon
\delta \sigma \delta \sigma^{-1} \epsilon$ agree.
\end{proof}

\begin{definition}\label{def:non_crossing}
Let $\pi \in S_n$ be a permutation of $[n]$ and $\gamma =
(1,2,3, \dots,\ab n)$. We say that $\pi$ is a
\textit{non-crossing permutation} if $\#(\pi) +
\#(\pi^{-1}\gamma) = n + 1$. We shall denote by $S_{NC}(n)$
the non-crossing permutations of $[n]$.
\end{definition}

\begin{remark}\label{remark:non-crossing_permutations}
We have already used the idea of taking a permutation of
$[n]$ and regard it as a partition of $[n]$ by using the
decomposition of the permutation into disjoint cycles and
making these the blocks of a partition. Biane \cite{biane}
showed that the permutations that satisfy $\#(\pi) +
\#(\pi^{-1}\gamma) = n + 1$ i.e. $g = 0$ in Equation
(\ref{eq:eulers_formula}), are exactly those whose cycles
form a non-crossing partition of $[n]$.
\end{remark}

\begin{proposition}\label{prop:sigma_non_crossing}
Let $\sigma \in S_n$ and $\epsilon \in \bZ_2^n$. Suppose
that $\epsilon$ is constant on the cycles of $\sigma$. Then
$f_\epsilon(\sigma) \leq 0$ with equality only if
$\sigma_\epsilon$ is a non-crossing permutation.
\end{proposition}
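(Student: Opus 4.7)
The plan is to reduce $f_\epsilon(\sigma)$ to a standard genus expression involving $\sigma_\epsilon$ and $\gamma$, and then invoke Biane's characterization of non-crossing permutations (Remark \ref{remark:non-crossing_permutations}).

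First I would simplify the join $\epsilon\gamma\delta\gamma^{-1}\epsilon \vee \sigma\delta\sigma^{-1}$. Since conjugation preserves the cycle structure of both factors (and hence the number of blocks in their supremum), I would conjugate by $\epsilon$ to obtain
\[
\#(\epsilon\gamma\delta\gamma^{-1}\epsilon \vee \sigma\delta\sigma^{-1})
= \#(\gamma\delta\gamma^{-1} \vee \epsilon\sigma\delta\sigma^{-1}\epsilon).
\]
Because $\delta$ commutes with $\epsilon$ and $\delta^2 = \mathrm{id}$, multiplying the identity of Lemma \ref{lemma:sigma_epsilon} on the left by $\delta$ gives $\epsilon\sigma\delta\sigma^{-1}\epsilon = \sigma_\epsilon\delta\sigma_\epsilon^{-1}$. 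Therefore
\[
f_\epsilon(\sigma) = \#(\gamma\delta\gamma^{-1} \vee \sigma_\epsilon\delta\sigma_\epsilon^{-1}) + \#(\sigma_\epsilon) - (n+1),
\]
using also that $\#(\sigma_\epsilon) = \#(\sigma)$ since $\sigma_\epsilon$ is obtained from $\sigma$ merely by inverting selected cycles.

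Next I would apply Lemma \ref{lemma:pairingproduct} to the two pairings $\pi_1 = \gamma\delta\gamma^{-1}$ and $\pi_2 = \sigma_\epsilon\delta\sigma_\epsilon^{-1}$ of $[\pm n]$, which gives $2\#(\pi_1 \vee \pi_2) = \#(\pi_1\pi_2)$. Each of $\pi_1$ and $\pi_2$ swaps $[n]$ with $[-n]$, so their product preserves each. A direct computation shows that on $[n]$ the product $\pi_1\pi_2$ acts as $k \mapsto \gamma\sigma_\epsilon^{-1}(k)$, and the restriction to $[-n]$ is conjugate to $\gamma^{-1}\sigma_\epsilon$ via $\delta$; in either case the number of cycles is $\#(\sigma_\epsilon^{-1}\gamma)$. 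Hence $\#(\pi_1\pi_2) = 2\#(\sigma_\epsilon^{-1}\gamma)$ and consequently $\#(\gamma\delta\gamma^{-1} \vee \sigma_\epsilon\delta\sigma_\epsilon^{-1}) = \#(\sigma_\epsilon^{-1}\gamma)$.

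Finally I would apply Euler's formula (\ref{eq:eulers_formula}) with the pair $(\sigma_\epsilon, \gamma)$, which generate a transitive subgroup because $\gamma$ alone is an $n$-cycle:
\[
\#(\sigma_\epsilon) + \#(\sigma_\epsilon^{-1}\gamma) + \#(\gamma) = n + 2(1-g),
\]
so $\#(\sigma_\epsilon) + \#(\sigma_\epsilon^{-1}\gamma) = n + 1 - 2g$. Substituting back gives $f_\epsilon(\sigma) = -2g \leq 0$, with equality precisely when $g=0$, i.e. when $\sigma_\epsilon$ is non-crossing by Remark \ref{remark:non-crossing_permutations}.

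The main obstacle will be the explicit computation in the middle paragraph: one must carefully verify that $\pi_1\pi_2$ preserves $[n]$ and identify its restriction with $\gamma\sigma_\epsilon^{-1}$ (and show that the two halves of the product contribute equal cycle counts). Once this bookkeeping is done, the rest is a clean application of Euler's formula and Biane's characterization.
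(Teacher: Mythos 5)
Your proposal is correct and follows essentially the same approach as the paper's proof: reduce the join to a product of pairings via Lemma \ref{lemma:pairingproduct}, use Lemma \ref{lemma:sigma_epsilon} to replace $\sigma$ with $\sigma_\epsilon$, arrive at $f_\epsilon(\sigma) = \#(\sigma_\epsilon) + \#(\sigma_\epsilon^{-1}\gamma) - (n+1)$, and close with Euler's formula and Biane's characterization. The only difference is in bookkeeping: you conjugate the join by $\epsilon$ up front and then compute the product of the two pairings directly on $[n]$ and $[-n]$, whereas the paper works with the product form throughout and conjugates by $\sigma_\epsilon^{-1}$ at the end; both routes land on the same identity.
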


\begin{proof}
Let $\epsilon \delta \sigma \delta \sigma^{-1} \epsilon =
\delta \sigma_\epsilon \delta \sigma_\epsilon^{-1}$ as in
Lemma \ref{lemma:sigma_epsilon}.  As in the proof of Lemma
\ref{lemma:negativity_of_f} we have that
\begin{eqnarray*}
f_\epsilon(\sigma) & = &
{\textstyle\frac{1}{2}}
  \#(\gamma \delta \gamma^{-1} \delta \epsilon \delta
  \sigma\delta \sigma^{-1}\epsilon) + \#(\sigma) -(n+1) \\
& = &
{\textstyle\frac{1}{2}}
  \#(\gamma \delta \gamma^{-1} \delta \delta \sigma_\epsilon
  \delta \sigma^{-1}_\epsilon) + \#( \sigma_\epsilon) -(n + 1)
\\
& = &
{\textstyle\frac{1}{2}}
  \#( \sigma_\epsilon^{-1}\gamma \, \delta \gamma^{-1}
  \sigma_\epsilon \delta ) + \#(\sigma_\epsilon) - (n + 1) \\
& = &
\#(\sigma_\epsilon) + \#(\sigma_\epsilon^{-1}\gamma ) - (n + 1).
\end{eqnarray*}
By Equation (\ref{eq:eulers_formula}) we have
$f_\epsilon(\sigma) \leq 0$; and, according to Definition
\ref{def:non_crossing}, $\sigma_\epsilon$ is a non-crossing
permutation if and only if $f_\epsilon(\sigma) = 0$
\end{proof}

\begin{remark}\label{rem:epsilon_examples}
As an illustration let us consider two examples: $\epsilon
\equiv 1$ and $\epsilon \equiv -1$. First suppose $\epsilon
\equiv 1$, then $ \#(\epsilon
\gamma\delta\gamma^{-1}\epsilon \vee
\sigma\delta\sigma^{-1}) = {\textstyle\frac{1}{2}}
\#(\gamma\delta\gamma^{-1} \sigma\delta\sigma^{-1}) =
\#(\sigma^{-1}\gamma)$, so $\sigma_1 = \sigma$ and
$f_1(\sigma) = \#(\sigma) + \#(\sigma^{-1} \gamma) - (n+1) =
0$ only if $\sigma$ is non-crossing. When $\epsilon \equiv
-1$ we have that $ \#(\epsilon
\gamma\delta\gamma^{-1}\epsilon \vee
\sigma\delta\sigma^{-1}) = {\textstyle\frac{1}{2}}
\#(\delta\gamma\delta\gamma^{-1}\delta
\sigma\delta\sigma^{-1}) = \#(\sigma^{-1}\gamma)$, so
$\sigma_{_1} = \sigma^{-1}$ and $f_{-1}(\sigma) =
\#(\sigma^{-1}) + \#(\sigma \gamma) - (n+1) = 0$ only if
$\sigma^{-1}$ is non-crossing.
\end{remark}


\section{Limit Distributions}\label{sec:limit-distributions}

We assume that $d_1 d_2 \rightarrow \infty$ and that
$\ds\frac{p}{d_1d_2}\rightarrow c$, for some $0 < c < \infty$. Since
$W$ and $W^\str$ are Wishart matrices, their eigenvalue
distributions  converge to the
Marchenko-Pastur law with parameter $c$. Setting  $b = (1 +
\sqrt{c})^2$ and $a = (1 - \sqrt{c})^2$, this is the
distribution on $\bR^+$ that has density $\ds\frac{\sqrt{(b
    - t)(t - a)}}{2 \pi t}$ on $[a, b]$ and an atom of
weight $(1 - c)$ at $0$ if $ c < 1$.

The asymptotic eigenvalue distributions of $W^\ltr$ and
$W^\rtr$ were described by G. Auburn (see \cite{aubrun}) for
the case when $ d_1, d_2 \rightarrow \infty $, respectively
by T. Banica and I. Nechita for the case when $ d_1
\rightarrow \infty $ (see \cite{bn}). The calculations below
give another proof of these results and give some more
insight on the limit distributions.

\begin{lemma}\label{lemma:non-crossing-inverse}
Let $\sigma \in S_n$ and suppose that both $\sigma$ and
$\sigma^{-1}$ are non-crossing in the sense of Definition
\ref{def:non_crossing}. Then $\sigma$ can have only cycles
of size 1 or 2.
\end{lemma}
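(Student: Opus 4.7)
The plan is to use a sharpened form of Biane's theorem, going slightly beyond what is quoted in Remark~\ref{remark:non-crossing_permutations}. Namely, I would rely on the fact that the map from $S_{NC}(n)$ to the set of non-crossing partitions of $[n]$ which sends a non-crossing permutation to its cycle partition is actually a \emph{bijection}: given a non-crossing partition $\pi$ of $[n]$, the unique non-crossing permutation whose cycles are the blocks of $\pi$ is the one which, on each block $\{b_1 < b_2 < \cdots < b_k\}$, acts as the single cycle $(b_1, b_2, \ldots, b_k)$ -- i.e., in the increasing cyclic order inherited from $\gamma = (1, 2, \ldots, n)$.

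Granting this, the lemma will follow in just a few lines. The cycle decomposition of $\sigma^{-1}$ is obtained from that of $\sigma$ by reversing each cycle: a cycle $(a_1, a_2, \ldots, a_k)$ of $\sigma$ becomes $(a_1, a_k, a_{k-1}, \ldots, a_2)$ in $\sigma^{-1}$, so in particular $\sigma$ and $\sigma^{-1}$ induce the same cycle partition $\pi$ of $[n]$. If both $\sigma$ and $\sigma^{-1}$ are non-crossing, the bijection above forces each of them to coincide with the unique non-crossing permutation attached to $\pi$. Hence $\sigma = \sigma^{-1}$, i.e., $\sigma^2 = \mathrm{id}$, which is equivalent to every cycle of $\sigma$ having length $1$ or $2$, as claimed.

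The main obstacle will be supplying the refinement of Biane's theorem, since Remark~\ref{remark:non-crossing_permutations} as stated only asserts that the cycles of a non-crossing permutation form a non-crossing partition, not that the cyclic order within each block is rigidly determined. (Indeed, for $n = 3$ the two $3$-cycles $(1,2,3)$ and $(1,3,2)$ have the same cycle partition but only the first is non-crossing.) I would prove the refinement by induction on $n$, exploiting the fact that every non-crossing partition admits an \emph{interval block} $\{i, i+1, \ldots, i+k-1\}$. Tracking the genus equation $\#(\sigma) + \#(\sigma^{-1}\gamma) = n+1$ across removal of such a block forces $\sigma$ restricted to the interval to be the cycle $(i, i+1, \ldots, i+k-1)$; the remaining permutation is then a non-crossing permutation on $[n] \setminus \{i, \ldots, i+k-1\}$ with respect to the induced long cycle, and the inductive hypothesis closes the argument.
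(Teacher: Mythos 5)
Your argument is correct and takes a genuinely different route from the paper's. You reduce the lemma to the sharpened form of Biane's theorem (the cycle map $S_{NC}(n)\to NC(n)$ is a \emph{bijection}, with inverse assigning to each non-crossing partition the permutation that traverses every block in increasing cyclic order), then observe that $\sigma$ and $\sigma^{-1}$ induce the same cycle partition, so if both are non-crossing the injectivity of the cycle map forces $\sigma=\sigma^{-1}$. The paper instead works in the pairing picture: it uses the bijection $\sigma\mapsto\sigma\delta\sigma^{-1}$ from $S_{NC}(n)$ to $NC_2^\delta(\pm n)$ (its Lemma \ref{lemma:non-crossing_bijection}) and directly exhibits two crossing pairs $(i_1,-i_2)$ and $(i_2,-i_3)$ in $\sigma^{-1}\delta\sigma$ whenever $i_1<i_2<i_3$ satisfy $\sigma(i_1)=i_2$, $\sigma(i_2)=i_3$, concluding $\sigma^{-1}\notin S_{NC}(n)$. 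Worth noting: both proofs ultimately invoke the same extra input you flag. The paper's argument, to conclude ``no such triple exists $\Rightarrow$ all cycles have length $\le 2$,'' also implicitly relies on the increasing-cyclic-order description of non-crossing permutations (otherwise one cannot guarantee a cycle of length $\ge 3$ produces an increasing triple, as e.g.\ $(1,3,2,4)$ shows), a point the paper leaves unstated but you make explicit. Your route is arguably cleaner and more self-contained modulo the Biane refinement (which is classical; your interval-block induction is one standard proof of it), while the paper's route has the virtue of staying entirely within the $\delta$-pairing machinery already developed for the moment computations in the rest of the section.
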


Before proving Lemma \ref{lemma:non-crossing-inverse} we
need to recall some standard facts about permutations and
pairings. We let $[\pm n] = \{1, -1, 2, -2, \dots, n,\ab
-n\}$. If $\sigma \in S_n$ is a permutation of $[n]$ then
$\sigma \delta \sigma^{-1}$ is a pairing of $[\pm n]$;
moreover if $(r, s)$ is a pair in this pairing then $r$ and
$s$ have opposite signs. We let $\cP_2^\delta(\pm n)$ be the
set of pairings of $[ \pm n]$ that only pair a positive
number to a negative number. There is a standard bijection
from $S_n$ to $\cP_2^\delta( \pm n)$ that we shall use. For
$\sigma \in S_n$ we have $\sigma \delta \sigma^{-1} \in
\cP_2^\delta( \pm n)$. If $\pi \in \cP_2^\delta(\pm n)$ then
$\pi \delta$ leaves $[n]$ invariant and so $\pi
\delta|_{[n]} \in S_n$. These two maps are inverses of each
other.

For example consider $\gamma = (1, 2, \dots, n) \in
S_n$. Then $\gamma \delta \gamma^{-1} = (-n, 1) \ab (-1,
2)(-2, 3) \cdots (-(n-1), n) \in \cP_2^\delta( \pm n)$ and
$(\gamma \delta \gamma^{-1}) \delta |_{[n]} = \gamma$.  Also
the permutation $\gamma\delta$ has the one cycle $(1, -1, 2,
-2, \dots, n, -n)$.

Inside $\cP_2^\delta(\pm n)$ we have the non-crossing
pairings of $[ \pm n]$ which only connect a positive number
to a negative number; we shall denote this subset by
$NC_2^\delta(\pm n)$.

\begin{lemma}\label{lemma:non-crossing_bijection}
The map $\sigma \mapsto \sigma \delta \sigma^{-1}$ is a
bijection from $S_{NC}(n)$ to $NC_2^\delta(\pm n)$.
\end{lemma}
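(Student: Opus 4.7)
The preceding paragraph already exhibits $\sigma \mapsto \sigma\delta\sigma^{-1}$ as a bijection between $S_n$ and $\cP_2^\delta(\pm n)$, so the task reduces to verifying that this bijection restricts correctly to the non-crossing subsets, i.e.\ that $\sigma \in S_{NC}(n)$ if and only if $\sigma\delta\sigma^{-1} \in NC_2^\delta(\pm n)$.

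My plan is to translate both conditions into a single cycle-counting identity. On the $S_n$ side, Definition \ref{def:non_crossing} already gives $\sigma \in S_{NC}(n) \iff \#(\sigma) + \#(\sigma^{-1}\gamma) = n+1$. On the pairing side, the cyclic order on $[\pm n]$ is the one induced by the $2n$-cycle $\gamma\delta = (1,-1,2,-2,\dots,n,-n)$. Applying the Euler identity \eqref{eq:eulers_formula} to the pair of permutations $\pi$ and $\gamma\delta$ on $[\pm n]$ (with $\#(\pi)=n$ and $\#(\gamma\delta)=1$) yields $\#(\pi\gamma\delta) = n+1-2g$, so a pairing $\pi \in \cP_2^\delta(\pm n)$ lies in $NC_2^\delta(\pm n)$ if and only if $\#(\pi\gamma\delta) = n+1$.

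The remaining step, and the technical heart of the argument, is to compute $\#(\sigma\delta\sigma^{-1}\gamma\delta)$ in terms of $\sigma$. I would do this by direct inspection, using the convention that $\sigma$ and $\gamma$ each fix $[-n]$ pointwise. For $k>0$ one finds $\gamma\delta(k) = -k$ and then $\sigma\delta\sigma^{-1}(-k) = \sigma\delta(-k) = \sigma(k) \in [n]$, while $\gamma\delta(-k) = \gamma(k)$ and $\sigma\delta\sigma^{-1}(\gamma(k)) = \sigma(-\sigma^{-1}\gamma(k)) = -\sigma^{-1}\gamma(k) \in [-n]$. Hence $\sigma\delta\sigma^{-1}\gamma\delta$ preserves both $[n]$ and $[-n]$, acting as $\sigma$ on the former and, under the identification $-k \leftrightarrow k$, as $\sigma^{-1}\gamma$ on the latter. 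Consequently
\[
\#(\sigma\delta\sigma^{-1}\gamma\delta) \;=\; \#(\sigma) + \#(\sigma^{-1}\gamma).
\]
Combining with the characterization of non-crossingness above gives $\sigma\delta\sigma^{-1} \in NC_2^\delta(\pm n) \iff \#(\sigma)+\#(\sigma^{-1}\gamma)=n+1 \iff \sigma \in S_{NC}(n)$, and the restriction is a bijection.

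I expect the only real obstacle to be the cycle-count computation in the third paragraph; the sign bookkeeping and the careful use of the convention $\sigma|_{[-n]} = \mathrm{id}$ are easy to get wrong, but once the invariant decomposition $[\pm n] = [n] \sqcup [-n]$ is observed the formula drops out. A more pictorial alternative would be to arrange the points $1,-1,2,-2,\dots,n,-n$ on a circle and verify by hand that the chords $\{k,-\sigma(k)\}$ cross in this cyclic order exactly when the cycles of $\sigma$ cross as a partition of $[n]$ (invoking Remark \ref{remark:non-crossing_permutations}); but the cycle-counting route above is cleaner and reuses only \eqref{eq:eulers_formula}.
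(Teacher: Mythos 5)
Your proposal is correct and follows essentially the same route as the paper: both characterize non-crossingness on each side via the genus-zero identity (\ref{eq:eulers_formula})/Biane's remark, and both obtain $\#(\sigma\delta\sigma^{-1}\gamma\delta)=\#(\sigma)+\#(\sigma^{-1}\gamma)$ by observing that this permutation preserves $[n]$ and $[-n]$ separately. The only cosmetic difference is that you track the two restricted actions explicitly (as $\sigma$ and $\sigma^{-1}\gamma$), whereas the paper records the same decomposition more tersely as $\#(\delta\sigma\delta)+\#(\sigma^{-1}\gamma)$.
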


\begin{proof}
We have to check that $\sigma \in S_{NC}(n)$ if and only if
$\sigma \delta \sigma^{-1} \in NC_2^\delta(\pm n)$. Note
that $\sigma \delta \sigma^{-1}$ is a pairing so that
$\#(\sigma \delta \sigma^{-1}) = n$. Also $\#( (\sigma
\delta \sigma^{-1})^{-1} \gamma \delta) = \#(\delta \sigma
\delta \sigma^{-1} \gamma) = \#(\delta \sigma \delta) +
\#(\sigma^{-1} \gamma) = \#(\sigma) + \#(\sigma^{-1}
\gamma)$ because $\delta \sigma \delta$ acts trivially on
$[n]$ and $\sigma^{-1}\gamma$ acts trivially on $[-n]$. Thus
$\#(\sigma \delta \sigma^{-1}) + \#( (\sigma \delta
\sigma^{-1})^{-1} \gamma \delta) = n + \#(\sigma) +
\#(\sigma^{-1} \gamma)$. By Remark
\ref{remark:non-crossing_permutations} we have that $\sigma$
is non-crossing if and only if $\sigma \delta \sigma^{-1}$
is non-crossing.
\end{proof}

\medskip\noindent
\textit{Proof of Lemma \ref{lemma:non-crossing-inverse}.} 
Suppose that $\sigma \in S_{NC}(n)$ and $i_1 < i_2 < i_3$
are distinct with $\sigma(i_1) = i_2$ and $\sigma(i_2) =
i_3$. Then $\gamma\delta$ visits $\{i_1, -i_1, i_2, -i_2,
i_3,\ab -i_3\}$ and $(i_1, -i_2)$ and $(i_2, -i_3)$ are
pairs of $\sigma^{-1} \delta \sigma$. Thus $\sigma^{-1}
\delta \sigma$ is not in $NC_2^\delta(\pm n)$ and hence by
Lemma \ref{lemma:non-crossing_bijection} $\sigma^{-1}
\not\in S_{NC}(n)$. Thus the only permutations $\sigma \in
S_{NC}(n)$ for which $\sigma^{-1} \in S_{NC}(n)$ are those
where $\sigma = \sigma^{-1}$, i.e. all cycles are singletons
or pairs.  
\hfill\qed

\begin{theorem}\label{thm:distrib}\cite[Thm. 1]{aubrun}
If $ d_1, d_2 \rightarrow \infty $, then the limit
distributions of $W^\ltr$ and $W^\rtr$ are semi-circular
with mean $c $ and variance $ c $.
\end{theorem}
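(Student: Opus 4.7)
The plan is to apply Theorem \ref{thm:exp_mixed_mom} with the specific choices of sign vectors that select $W^\ltr$ and $W^\rtr$, and then let $d_1,d_2\to\infty$. For $W^\ltr$ we take $(\epsilon_1,\dots,\epsilon_n)\equiv -1$ and $(\eta_1,\dots,\eta_n)\equiv 1$, while for $W^\rtr$ the two sign vectors are swapped. In either case the formula gives
\[
\E(\tr\otimes\tr((W^\ltr)^n)) = \sum_{\sigma\in S_n} \bigg(\frac{p}{d_1d_2}\bigg)^{\#(\sigma)} d_1^{\,f_{-1}(\sigma)} d_2^{\,f_{1}(\sigma)},
\]
and analogously for $W^\rtr$ with $f_{1}$ and $f_{-1}$ interchanged. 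Since $p/(d_1d_2)\to c$, the limit is governed by which $\sigma$ keep the $d_1$- and $d_2$-exponents equal to zero.

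Next I would invoke Proposition \ref{prop:sigma_non_crossing}, together with the explicit computation in Remark \ref{rem:epsilon_examples}, to conclude that $f_{1}(\sigma)\le 0$ with equality iff $\sigma$ is non-crossing, and $f_{-1}(\sigma)\le 0$ with equality iff $\sigma^{-1}$ is non-crossing. Hence the only $\sigma$ that survive in the limit are those for which both $\sigma$ and $\sigma^{-1}$ lie in $S_{NC}(n)$. By Lemma \ref{lemma:non-crossing-inverse} such a $\sigma$ must be an involution, i.e. consist only of fixed points and transpositions. Note that the analysis for $W^\rtr$ is identical, since the conditions on $\epsilon$ and $\eta$ are symmetric in this symmetric situation.

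Therefore the surviving sum is
\[
\lim_{d_1,d_2\to\infty}\E(\tr\otimes\tr((W^\ltr)^n)) = \sum_{\sigma\in S_{NC}(n),\ \sigma^2=\mathrm{id}} c^{\#(\sigma)},
\]
and the right-hand side is indexed precisely by the non-crossing partitions of $[n]$ with blocks of size $1$ or $2$, each weighted by $c$ to the number of blocks. By the moment-cumulant formula this is exactly the $n$-th moment of an operator whose only non-vanishing free cumulants are $\kappa_1=\kappa_2=c$, i.e. a semi-circular element with mean $c$ and variance $c$. The same computation applies to $W^\rtr$.

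The main obstacle is really conceptual rather than technical: one has to recognise that the asymptotic regime forces the double non-crossing constraint ``$\sigma$ and $\sigma^{-1}$ both non-crossing,'' and that Lemma \ref{lemma:non-crossing-inverse} collapses this to involutions; once this is in hand, the identification with the semi-circular moments via the moment-cumulant formula is immediate. No genus-counting beyond what Proposition \ref{prop:sigma_non_crossing} already provides is needed.
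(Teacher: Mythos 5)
Your proof is correct and takes essentially the same route as the paper: setting $\epsilon\equiv -1$, $\eta\equiv 1$ (resp.\ swapped) in Theorem \ref{thm:exp_mixed_mom}, using Remark \ref{rem:epsilon_examples} to reduce the surviving $\sigma$ to those with both $\sigma$ and $\sigma^{-1}$ non-crossing, and then applying Lemma \ref{lemma:non-crossing-inverse} to collapse the sum to non-crossing involutions, matching the moments of a shifted semi-circular with $\kappa_1=\kappa_2=c$. The paper's own proof is just a one-line pointer to this chain of lemmas, which you have unpacked faithfully.
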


\begin{proof}
We have just shown that the only non-vanishing cumulants of
the limiting distribution are $\kappa_1 = \kappa_2 =
c$. Thus the limiting distribution is semi-circular.
\end{proof}

\begin{remark}
The measure on $\bR$ whose free cumulants are $\kappa_1 =
\kappa_2 = c$ and $\kappa_n = 0$ for $n \geq 3$ is the
shifted semi-circle law. It has density $\frac{1}{2 \pi c}
\sqrt{ 4 c^2 - (t - c)^2}$ on the interval $[c - 2 \sqrt{c},
  c + 2 \sqrt{c}]$. We have used a different normalization
for $W$ than Aubrun, (we used $\frac{1}{d_1d_2}$ and he used
$\frac{1}{p}$), the advantage of ours is that the free
cumulants are very simple with this normalization.
\end{remark}

Next, we shall discuss the case when only one of the
parameters $ d_1 $, $ d_2$ approaches infinity and the other
one is fixed.

The following remarkable result is due to T.~Banica and
I.~Nechita \cite[Lemma 1.1]{bn}.
  
\begin{lemma}\label{lemma:bn}
Suppose that $ \sigma $ is a non-crossing permutation and
that $ \tau $ is a cycle of length $ n $ in $ S_n $. Then
\[
\# ( \sigma \tau ) = 1 + e ( \sigma)
\]
where $ e(\sigma ) $ is the number of cycles of $ \sigma $
of even length.
\end{lemma}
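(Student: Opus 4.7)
I would prove the identity by induction on $n$, exploiting the recursive structure of non-crossing partitions. Since the quantities $\#(\sigma\tau)$, $\#(\sigma)$, and $e(\sigma)$ are all invariant under simultaneous conjugation of $\sigma$ and $\tau$ by any $\alpha \in S_n$, and the non-crossing condition (read relative to $\tau$) is preserved by such conjugation, I would first reduce to the case $\tau = \gamma = (1, 2, \ldots, n)$, so the task becomes showing $\#(\sigma\gamma) = 1 + e(\sigma)$ for every $\sigma \in S_{NC}(n)$.

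By Remark \ref{remark:non-crossing_permutations}, the cycles of a non-crossing permutation form a non-crossing partition of $[n]$, and every such partition has an interval block. So $\sigma$ has an interval cycle $c = (i, i+1, \ldots, j)$ of some length $k = j - i + 1 \geq 1$. If $k = 1$, then $i$ is a fixed point of $\sigma$, and removing $i$ yields a non-crossing permutation $\tilde\sigma$ on $[n-1]$ with $e(\tilde\sigma) = e(\sigma)$. A direct check shows that in $\sigma\gamma$ the fixed point satisfies $i - 1 \mapsto i \mapsto \sigma(i+1)$, whereas $\tilde\sigma\tilde\gamma$ sends $i - 1 \mapsto \sigma(i+1)$ directly; the cycle simply ``short-circuits'' through $i$, so $\#(\sigma\gamma) = \#(\tilde\sigma\tilde\gamma)$, and the inductive hypothesis closes the case.

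If $k \geq 2$, write $\sigma = c\sigma'$ where $\sigma' = \sigma c^{-1}$ fixes every element of $\{i, \ldots, j\}$; since splitting an interval block into singletons preserves non-crossing, $\sigma' \in S_{NC}(n)$ and $e(\sigma') = e(\sigma) - [k \text{ even}]$. Applying the fixed-point case $k$ times gives $\#(\sigma'\gamma) = 1 + e(\sigma')$, and it suffices to prove the key identity $\#(c\sigma'\gamma) = \#(\sigma'\gamma) + [k \text{ even}]$. Writing out $c\sigma'\gamma$ explicitly on $\{i - 1, i, \ldots, j\}$ gives $c\sigma'\gamma(i-1) = i+1$, $c\sigma'\gamma(m) = m+2$ for $m \in \{i, \ldots, j-2\}$, $c\sigma'\gamma(j-1) = i$, and $c\sigma'\gamma(j) = \sigma'(j+1)$. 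Tracing the resulting ``step-by-2'' orbits shows that for $k$ odd all elements of $\{i-1, i, \ldots, j\}$ lie in a single cycle of $c\sigma'\gamma$, yielding $\#(c\sigma'\gamma) = \#(\sigma'\gamma)$; whereas for $k$ even the orbit of $i-1$ visits only the odd offsets $\{i-1, i+1, i+3, \ldots, j\}$ before exiting to $\sigma'(j+1)$, producing a separate small cycle $(i, i+2, \ldots, j-1)$ of length $k/2$ and hence $\#(c\sigma'\gamma) = \#(\sigma'\gamma) + 1$. Combining everything gives $\#(\sigma\gamma) = 1 + e(\sigma)$.

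\textbf{Main obstacle.} The technical heart of the argument is the parity analysis of orbits inside the interval: one must verify that the ``increment by 2'' pattern either sweeps out all of $\{i-1, i, \ldots, j\}$ in a single orbit (for odd $k$) or peels off a disjoint cycle of length $k/2$ (for even $k$). Once this parity fact is established and matched with $e(\sigma) - e(\sigma') = [k \text{ even}]$, the induction closes cleanly; everything else (the bijection between singleton blocks and fixed points, invariance of $e$ under fixed-point removal, and preservation of the non-crossing property of $\sigma'$) is routine bookkeeping.
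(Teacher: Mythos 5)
The paper does not actually prove Lemma \ref{lemma:bn}: it is cited verbatim from Banica--Nechita \cite[Lemma 1.1]{bn}, so there is no in-paper proof against which to compare. Taken on its own terms, your argument is correct and supplies a self-contained proof. A few remarks on the details.

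Your opening reduction is doing more work than it may appear. As written, Definition \ref{def:non_crossing} makes ``non-crossing'' a condition relative to the fixed $\gamma = (1,\dots,n)$, and with that literal reading the lemma would be \emph{false} for a general $n$-cycle $\tau$ (e.g.\ $n=3$, $\sigma=\gamma$, $\tau=\gamma^{-1}$ gives $\#(\sigma\tau)=3$ but $1+e(\sigma)=1$). The correct reading, which you adopt, is that $\sigma$ is non-crossing relative to $\tau$; your conjugation argument then legitimately reduces to $\tau=\gamma$, and it is worth stating this explicitly since the paper's phrasing is ambiguous.

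The induction itself is sound. The $k=1$ case (``short-circuiting'' a fixed point of $\sigma$ out of the $\gamma$-cycle) is standard. In the $k\ge 2$ case, the key identity $\#(c\sigma'\gamma)=\#(\sigma'\gamma)+[k\ \text{even}]$ is correct, and your parity trace checks out: for $k$ odd the orbit of $i-1$ under $c\sigma'\gamma$ sweeps through $i-1,i+1,\dots,j-1$, then wraps via $j-1\mapsto i$ through $i,i+2,\dots,j$ before exiting at $\sigma'(j+1)$, while for $k$ even the orbit of $i-1$ exits after $i-1,i+1,\dots,j-2,j$ and leaves behind the disjoint $(k/2)$-cycle $(i,i+2,\dots,j-1)$. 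Combined with $e(\sigma)=e(\sigma')+[k\ \text{even}]$ and the inductive $\#(\sigma'\gamma)=1+e(\sigma')$ (which really is an application of the inductive hypothesis at size $n-k$, after deleting the $k$ new fixed points), this closes the argument.

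Two small points to tighten in a write-up: (a) the arithmetic $i-1$ and $j+1$ must be read modulo $n$ (so $i=1$ gives $i-1=n$, and $j=n$ gives $j+1=1$), and the degenerate case $i=1$, $j=n$ (i.e.\ $\sigma=\gamma$, $\sigma'=\operatorname{id}$) should be spot-checked since then $i-1=j$; it does work ($\#(\gamma^2)=1+[n\ \text{even}]$). (b) ``Applying the fixed-point case $k$ times'' is really an invocation of the inductive hypothesis at a smaller $n$, and phrasing it that way avoids any appearance of circularity in the induction.
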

 
Let us recall the main result of \cite[Theorem 3.1]{bn},
which computes the free cumulants of the limit distribution
of $d_1 W^\ltr$ as $p/(d_1d_2) \rightarrow c$ but keeping
$d_1$ fixed.

\begin{theorem}\label{thm:distrib:2}
Suppose that $d_1$ is a fixed positive integer, and
$\frac{p}{d_1d_2} \longrightarrow c$ with $0 < c < \infty$.
The free cumulants of the limit distribution of $d_1 W^\ltr$ are
$\kappa_n = c d_1^{\,2}$ for $n$ even and $\kappa_n = c d_1$
for $n$ odd. This limit distribution is the free difference
of two Marchenko-Pastur laws one with parameter $cd_1
\frac{d_1 + 1}{2}$ and the other $c d_1 \frac{d_1-1}{2}$.
\end{theorem}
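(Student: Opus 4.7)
The plan is to specialize Theorem \ref{thm:exp_mixed_mom} to the one-letter word $(W^\ltr)^n$, which means $\epsilon\equiv -1$ and $\eta\equiv 1$, and then multiply by $d_1^{\,n}$ to account for the normalization $d_1W^\ltr$. With $d_1$ fixed and $d_2\to\infty$, $p/(d_1d_2)\to c$, the prefactor $(p/(d_1d_2))^{\#(\sigma)}$ tends to $c^{\#(\sigma)}$ and the factor $d_2^{\,f_1(\sigma)}$ suppresses every term with $f_1(\sigma)<0$. Since $\eta\equiv 1$ gives $\sigma_\eta=\sigma$ by Remark \ref{rem:epsilon_examples}, Proposition \ref{prop:sigma_non_crossing} tells us that the permutations surviving in the limit are exactly $\sigma\in S_{NC}(n)$, on which $f_1(\sigma)=0$.

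For each such $\sigma$ the remaining $d_1$-exponent is $n+f_{-1}(\sigma)=\#(\sigma)+\#(\sigma\gamma)-1$, again from Remark \ref{rem:epsilon_examples}. The crucial combinatorial input is Lemma \ref{lemma:bn}: for a non-crossing $\sigma$ and the long cycle $\gamma$ one has $\#(\sigma\gamma)=1+e(\sigma)$, where $e(\sigma)$ counts the even-length cycles of $\sigma$. Substituting this collapses the exponent to $\#(\sigma)+e(\sigma)$, so that
\begin{equation*}
\lim_{d_2\to\infty}\E\bigl(\tr\otimes\tr\bigl((d_1W^\ltr)^n\bigr)\bigr)
=\sum_{\sigma\in S_{NC}(n)} c^{\#(\sigma)}\,d_1^{\,\#(\sigma)+e(\sigma)}
=\sum_{\sigma\in S_{NC}(n)}\,\prod_{C\in\sigma} c\,d_1^{\,1+[\,|C|\text{ even}\,]}.
\end{equation*}

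Under the bijection between $S_{NC}(n)$ and the non-crossing partitions $NC(n)$ (Remark \ref{remark:non-crossing_permutations}), the right-hand side is precisely the moment-cumulant formula applied to a variable whose free cumulants are $\kappa_k=cd_1^{\,1+[k\text{ even}]}$, that is $cd_1$ for $k$ odd and $cd_1^{\,2}$ for $k$ even, which is the first assertion. For the free-difference description, set $\alpha=cd_1(d_1+1)/2$ and $\beta=cd_1(d_1-1)/2$, so $\alpha+\beta=cd_1^{\,2}$ and $\alpha-\beta=cd_1$. If $X$ and $Y$ are free with Marchenko-Pastur distributions of parameters $\alpha$ and $\beta$ respectively, then $\kappa_k(X)=\alpha$ and $\kappa_k(Y)=\beta$ for every $k\ge 1$; homogeneity of free cumulants gives $\kappa_k(-Y)=(-1)^k\beta$, and additivity under free addition then yields $\kappa_k(X-Y)=\alpha+(-1)^k\beta$, matching the cumulants above. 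Hence the limit law is the free difference $\mathrm{MP}(\alpha)\boxminus\mathrm{MP}(\beta)$.

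The only delicate step is the invocation of Lemma \ref{lemma:bn}; without the identity $\#(\sigma\gamma)=1+e(\sigma)$ on non-crossing $\sigma$, the precise $d_1$-dependence of the cumulants could not be read off from this framework. Everything else is an unpacking of Theorem \ref{thm:exp_mixed_mom}, an application of the moment-cumulant formula via the Biane bijection, and a three-line verification of the free-difference identity.
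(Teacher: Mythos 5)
Your proof is correct and follows the same route as the paper: specialize Theorem \ref{thm:exp_mixed_mom}, use $d_2\to\infty$ to restrict to non-crossing $\sigma$, invoke Lemma \ref{lemma:bn} to evaluate the residual $d_1$-exponent as $\#(\sigma)+e(\sigma)$, and read off the cumulants and the free-difference decomposition. If anything, your choice $\epsilon\equiv -1$, $\eta\equiv 1$ is the one consistent with the paper's convention that $W^\ltr=W^{(-1,1)}$; the paper's proof writes $\epsilon\equiv 1$, $\eta\equiv -1$ (i.e.\ $W^\rtr$) yet then uses $d_1^{f_{-1}(\sigma)}$, which only makes sense under your assignment, so the published proof has the roles of $\epsilon$ and $\eta$ silently swapped.
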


\begin{proof}
Let $\epsilon \equiv 1$ and $ \eta \equiv -1 $ in  Theorem \ref{thm:exp_mixed_mom}.
 By Remark
\ref{rem:epsilon_examples},
 $f_\epsilon(\sigma) < 0$ unless
$\sigma \in NC(n)$. For $\sigma \in NC(n)$ and $\eta
\equiv -1$ we have by Remark \ref{rem:epsilon_examples} and
Lemma \ref{lemma:bn}, $f_\eta(\sigma) - \#(\sigma) + n =
e(\sigma)$. Hence Theorem \ref{thm:exp_mixed_mom} gives
\[
\lim_{d_2 \rightarrow \infty}
\E(\tr \otimes \tr( (d_1 W^\ltr)^n ))
= \kern-0.75em
\sum_{\sigma \in NC(n)}
c^{\#(\sigma)} d_1^{f_{-1}(\sigma) + n}
= \kern-0.75em
\sum_{\sigma \in NC(n)}
(d_1 c)^{\#(\sigma)} d_1^{\,e(\sigma)}.
\]
Note that if we set $\kappa_n = d_1^2 c$ for $n$ even and
$\kappa_n = d_ 1 c$ for $n$ odd then $\kappa_\sigma = (d_1
c)^{\#(\sigma)} d_1^{e(\sigma)}$. This shows that the limit
distribution of $d_1 W^\ltr$ has the claimed
cumulants. Since $\kappa_n = (d_1c) \frac{d_1+1}{2} + (-1)^n
(d_1 c) \frac{d_1 - 1}{2}$, we have the claim
about the distribution being a free difference of
Marchenko-Pastur laws.
\end{proof}

\begin{remark}\label{remark:24}
If in Theorem \ref{thm:exp_mixed_mom} we let $ \epsilon \equiv -1 $ and $ \eta \equiv 1 $, the coefficients $d_1$ and $d_2$ switch roles, hence the argument above also gives an
analogous statement for holding $d_2$ fixed. 
More precisely, if $d_2$ is
fixed and $p/(d_1d_2) \rightarrow c$, then the free
cumulants of the limit distribution of $d_2 W^\rtr$ are
given by $\kappa_n = d_2^2 c$ for $n$ even and $\kappa_n =
d_2 c$ for $n$ odd. This distribution is also the free
difference of two Marchenko-Pastur distributions one of
parameter $d_2 c \frac{d_2+1}{2}$ and one of $d_2 c
\frac{d_2 - 1}{2}$.
\end{remark}

\begin{remark}
Since taking transposes preserves eigenvalue distributions
Theorem \ref{thm:distrib:2} and Remark \ref{remark:24} also gives us the free cumulants
of the limit distribution of $d_1 W^\rtr$ and $d_2W^\ltr$.
\end{remark}


\setbox1=\hbox{$d_1 W^\ltr$}

\section{A natural free decomposition of \box1{} when $d_1 = 2$}

\label{sec:natural-decomosition}

In \cite{bn} it was shown that the limit distribution of
$d_1W^\ltr$ can be written as the free difference of two
Marchenko-Pastur laws. The operators so obtained are not
related to the operator $d_1 W^\ltr$ though. In this section
we shall show that there is a natural decomposition of $d_1
W^\ltr$ when $d_1 = 2$, namely the diagonal and off diagonal
blocks, into free summands. More precisely, we let $w$ be
the limit distribution of $W$, which we can write $w$ as a
$2 \times 2$ matrix
\[
w = \frac{1}{2}
\begin{pmatrix} w_{11} & w_{12} \\ w_{21} & w_{22} \\ \end{pmatrix}.
\]
Relative to this block decomposition $2 W^\ltr$ converges to 
\[
2 w ^\ltr
=
\begin{pmatrix} w_{11} & w_{21} \\ w_{12} & w_{22} \\ \end{pmatrix}.
\]
We consider the two operators
\[
X_1 = \begin{pmatrix} w_{11} & 0 \\ 0 & w_{22} \end{pmatrix}
\mbox{\ and\ }
X_2 = \begin{pmatrix} 0 & w_{21} \\ w_{12} & 0 \end{pmatrix}
\]

The diagonal summand $X_1$ has the Marchenko-Pastur
distribution and the off diagonal summand $X_2$ is even and
has the same even cumulants as the diagonal summand. Our
main result in this section is that $X_1$ and $X_2$ are
free.

\begin{notation}
Let $d_1 = 2$, and suppose $p/(d_1d_2)$ converges to $c$
with $0 < c < \infty$.  Let $\{E_{11}, E_{12}, E_{21},
E_{22}\}$ be the standard matrix units in $M_2(\bC)$, but
viewed as elements of $M_2(\bC) \otimes M_{d_2}(\bC)$.
\end{notation}

\begin{lemma}\label{lemma:block_structure}
There is a $*$-non-commutative probability space $(\cA,
\phi)$ with elements $w, e_{11}, e_{12}, e_{21}, e_{22} \in
\cA$ such that $w$ has the Marchenko-Pastur distribution
with parameter $c$ and $\{e_{11}, e_{12}, e_{21}, e_{22}\}$
are matrix units in $\cA$ free from $w$. Moreover the joint
distribution of $\{W, E_{11}, E_{12},\ab E_{21}, E_{22}\}$
converges to that of $\{w, e_{11}, e_{12}, e_{21}, e_{22}\}$
\end{lemma}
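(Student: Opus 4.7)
The plan is to deduce the lemma from Voiculescu's asymptotic freeness theorem for unitarily invariant random matrices.

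First, I would observe that the complex Wishart matrix $W = (d_1d_2)^{-1} GG^{*}$ is unitarily invariant on $\bC^{d_1d_2}$: for any $V \in \mathcal{U}(d_1d_2)$ the random matrix $VG$ has the same distribution as $G$ since the entries of $G$ are i.i.d.\ complex Gaussian, and hence $VWV^{*}$ has the same distribution as $W$. The key point is that this invariance holds for the full unitary group of the ambient space $\bC^{d_1d_2}$, not merely for the block-respecting subgroup $\mathcal{U}(d_1) \otimes \mathcal{U}(d_2)$, which is precisely what will make Voiculescu's theorem applicable.

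Next, I would verify that the deterministic family $E_{ij} = E_{ij} \otimes I_{d_2}$ has a limit joint $*$-distribution as $d_2 \to \infty$. A direct computation using $(\tr \otimes \tr)(A \otimes I_{d_2}) = d_1^{-1}\tr(A)$ shows that for any word, $(\tr \otimes \tr)(E_{i_1j_1} \cdots E_{i_kj_k}) = \tfrac{1}{2}\tr(e_{i_1j_1} \cdots e_{i_kj_k})$, where $\{e_{ij}\}$ are the standard matrix units of $M_2(\bC)$. This quantity is independent of $d_2$, so the $E_{ij}$ converge in joint $*$-distribution to the matrix unit system $\{e_{ij}\}$ inside $(M_2(\bC), \tfrac{1}{2}\Tr)$. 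On the other hand, the marginal limit of $W$ is the free Marchenko-Pastur element $w$ of parameter $c$, either from the standard Wishart limit theorem or by specializing Theorem \ref{thm:exp_mixed_mom} with $\epsilon \equiv \eta \equiv 1$.

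Finally, I would invoke Voiculescu's asymptotic freeness theorem: a single unitarily invariant random matrix converging in distribution, taken jointly with a deterministic family of matrices converging in $*$-distribution, is asymptotically $*$-free from that family. Applied in our setting, this yields the joint convergence of $(W, E_{11}, E_{12}, E_{21}, E_{22})$ to $(w, e_{11}, e_{12}, e_{21}, e_{22})$, with $w$ free from the matrix units $\{e_{ij}\}$. Taking $(\cA, \phi)$ to be the reduced free product of the distribution of $w$ with $(M_2(\bC), \tfrac{1}{2}\Tr)$ (or any GNS representation thereof) furnishes the required non-commutative $*$-probability space, and the joint convergence is exactly the asymptotic freeness statement. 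The only point requiring care is the verification of Voiculescu's hypotheses; the mild twist is that the deterministic family's limit algebra is only the finite-dimensional $M_2(\bC)$, while $W$ acts on the full $2d_2$-dimensional space, but this causes no difficulty for the theorem.
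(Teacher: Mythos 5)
Your proposal is correct and takes essentially the same route as the paper: the paper's proof is a one-line invocation of the unitary invariance of $W$ together with the asymptotic freeness theorem for unitarily invariant ensembles against deterministic matrices (citing Theorem 4.9 of Mingo--Speicher), which is the Voiculescu-type result you use. You have simply spelled out the verification of the hypotheses in more detail.
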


\begin{proof}
As $W$, our Wishart
matrix, is unitarily invariant, it is asymptotically free
from our matrix units (see \cite[Theorem 4.9]{ms}). 
This is exactly the claim of the lemma.
\end{proof}

\begin{notation}
Thus we may 
write the matrix of $w$ with respect to the
matrix units $\{e_{11}, e_{12}, e_{21}, e_{22}\}$ as
\[
w = \frac{1}{2} 
\begin{pmatrix}
w_{11} & w_{12} \\
w_{21} & w_{22} \\
\end{pmatrix}.
\]
\end{notation}

We will let $\phi_1$ be the state on $e_{11}\cA e_{11}$
given by $\phi_1(x) = 2 \phi(x)$. The elements $\{w_{11},
w_{12}, w_{21}, w_{22}\}$ are in $e_{11}\cA e_{11}$ so their
cumulants must be computed relative to the state
$\phi_1$. When necessary we will denote these relative
cumulants by $\kappa_n^{(1)}$.

\begin{lemma}\label{lemma:w_11}
Each of $w_{11}$ and $w_{22}$
have the Marchenko-Pastur distribution with parameter
$d_1 c$.
\end{lemma}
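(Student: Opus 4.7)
The plan is to identify $w_{11}$, viewed inside the corner algebra $(e_{11}\cA e_{11}, \phi_1)$, with the limit of the classical $(1,1)$ sub-matrix $W(1,1) = \tfrac{1}{d_2}G_1 G_1^*$ of the Wishart matrix, so that the statement is reduced to the classical Marchenko-Pastur theorem applied to $W(1,1)$ itself.

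First I would rewrite $w_{11}$ inside the corner. From the notation introduced just before the lemma, $w = \tfrac{1}{2}(w_{ij})_{ij}$ with respect to the matrix units $\{e_{ij}\}$, so the $(1,1)$-block of $w$ is $\tfrac{1}{2}w_{11}$; hence $e_{11} w e_{11} = \tfrac{1}{2} w_{11}$ and therefore $w_{11} = 2\, e_{11} w e_{11}$ as an element of $e_{11}\cA e_{11}$.

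Next I would pull this identification back to the finite-dimensional level. Writing $W = \tfrac{1}{2}\sum_{i,j} E_{ij}\otimes W(i,j)$ on $M_2(\bC)\otimes M_{d_2}(\bC)$, I get $2\, E_{11} W E_{11} = E_{11}\otimes W(1,1)$, and under the identification of the corner $E_{11}\bigl(M_2(\bC)\otimes M_{d_2}(\bC)\bigr)E_{11}$ with $M_{d_2}(\bC)$ this element is precisely $W(1,1) = \tfrac{1}{d_2}G_1 G_1^*$. Since $\phi(e_{11}) = \tfrac{1}{d_1} = \tfrac{1}{2}$, the rescaled corner state $\phi_1 = 2\,\phi|_{e_{11}\cA e_{11}}$ corresponds to the normalized trace $\tfrac{1}{d_2}\Tr$ on $M_{d_2}(\bC)$, so combining this with Lemma \ref{lemma:block_structure} I obtain
\[
\phi_1(w_{11}^n) \;=\; \lim_{d_2\to\infty}\E\!\left(\tfrac{1}{d_2}\Tr\!\bigl(W(1,1)^n\bigr)\right).
\]

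Finally, I would apply the classical Marchenko-Pastur theorem to $W(1,1) = \tfrac{1}{d_2}G_1 G_1^*$: this is a standard $d_2 \times d_2$ complex Wishart matrix built from the $d_2\times p$ Gaussian $G_1$, with aspect ratio $p/d_2 = d_1\cdot p/(d_1 d_2) \longrightarrow d_1 c$. Hence the right-hand side above equals the $n$-th moment of the Marchenko-Pastur law with parameter $d_1 c$, so $w_{11}$ has that distribution. The same argument, with $G_2$ in place of $G_1$, gives the claim for $w_{22}$. No substantive probabilistic difficulty arises; the only thing to be careful about is the book-keeping that makes the factor $d_1$ appear in the three places (the $\tfrac{1}{d_1}$ in $W = \tfrac{1}{d_1}(W(i,j))$, the trace $\phi(e_{11}) = 1/d_1$ of the projection, and the rescaling $\phi_1 = d_1\,\phi$), and watching them cancel so that the parameter is exactly $d_1 c$.
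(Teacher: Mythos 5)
Your argument is correct, and it takes a genuinely different route from the paper's. The paper works entirely in the abstract limit space: using $w_{11} = 2\,e_{11}we_{11}$, the freeness of $w$ from the matrix units (Lemma \ref{lemma:block_structure}), and the compression formula for free cumulants \cite[Theorem 14.18]{ns}, it computes directly that $\kappa_n^{(1)}(w_{11}) = 2^n\cdot(1/2)^{n-1}\kappa_n(w) = 2c$ for every $n$, which characterizes the Marchenko-Pastur law with parameter $2c$. You instead descend to the matrix level: from $2E_{11}WE_{11} = E_{11}\otimes W(1,1)$ with $W(1,1)=\tfrac{1}{d_2}G_1G_1^*$, and the fact that the rescaled corner state $\phi_1$ corresponds to the normalized trace $\tfrac{1}{d_2}\Tr$ on $M_{d_2}(\bC)$, you get $\phi_1(w_{11}^n)=\lim_{d_2\to\infty}\E\bigl(\tfrac{1}{d_2}\Tr(W(1,1)^n)\bigr)$, and then invoke the classical Marchenko-Pastur theorem for the $d_2\times d_2$ Wishart matrix $W(1,1)$ with aspect ratio $p/d_2 \to d_1 c$. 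Your route is more elementary in that it only needs the convergence of joint moments from Lemma \ref{lemma:block_structure}, not the freeness of $w$ from the $e_{ij}$, and it makes it transparent where the factor $d_1$ comes from (the aspect ratio of the $(1,1)$-block). The paper's route buys the cumulants $\kappa_n^{(1)}$ directly, which is the currency used in the remainder of Section \ref{sec:natural-decomosition} (e.g.\ Remark \ref{rem:entry_cumulants} and Lemma \ref{lemma:X-cumulants} use the same $R$-cyclic/compression machinery for the off-diagonal entries $w_{12}, w_{21}$, where no analogue of the concrete Wishart identification exists), so it integrates more cleanly with what follows.
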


\begin{proof}
By construction $w_{11} = e_{11}2we_{11}$. By \cite[Theorem 14.18]{ns}  
\begin{eqnarray*}
\kappa_n^{(1)}(w_{11}, \dots w_{11}) &=&
2^n \kappa_n(e_{11}we_{11}, \dots, e_{11}we_{11}) \\
&=&
2^{1} \kappa_n(w, \dots, w) = 2 c.
\end{eqnarray*}
\end{proof}

\begin{remark}\label{rem:entry_cumulants}
Elements $\{ a_{ij} \}_{i,j = 1}^n$ in a non-commutative
probability space $(\cA, \phi)$, they are called $R$-cyclic
if whenever $i_1, j_1, \dots, i_l, j_l \in [n]$ we have
$\kappa_l( a_{i_1j_1}, \dots, a_{i_lj_l}) = 0$ unless $j_1 =
i_2, \dots, j_{n-1} = i_n$ and $j_n = i_1$. By \cite[Example
  20.4]{ns} the elements $\{w_{11}, w_{12}, w_{21}, w_{22}
\}$ are $R$-cyclic. Moreover $\kappa_l(2w, \dots, 2w) = 2^l
\kappa_l(w, \dots, w) = 2^{l} c$.  So by \cite[Example
  20.4]{ns}, we have $\kappa_l^{(1)}(w_{i_1j_1}, w_{i_2j_2},
\dots, w_{i_lj_l}) = 2^{-l+1} \kappa_l(2w, \dots, 2w) = 2c$,
when $j_1 = i_2, \dots, j_{n-1} = i_n$ and $j_n = i_1$.
\end{remark}

Let $X_1 = \begin{pmatrix} w_{11} & 0 \\ 0 &
  w_{22}\\ \end{pmatrix}$ and $X_2 = \begin{pmatrix} 0 &
  w_{21} \\ w_{12} & 0\\ \end{pmatrix}$. Then 
$ 2 w^\ltr = X_1 + X_2$.   
 
\begin{lemma}\label{lemma:X-cumulants}
$X_1$ and $X_2$ are self-adjoint. The cumulants of $X_1$ are
  all equal to $2c$, i.e. $X_1$ is a Marchenko-Pastur
  operator with parameter $2c$. $X_2$ is an even operator in
  that it is self-adjoint and all of its odd moments are
  $0$. The even cumulants of $X_2$ are all equal to $2c$.
\end{lemma}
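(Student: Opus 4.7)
The plan is to pull $X_1$ and $X_2$ back to the corner algebra $e_{11}\cA e_{11}$, where the $R$-cyclic entries $\{w_{ij}\}$ live, compute their moments there, and then read off the free cumulants of $X_1$ and $X_2$ in $(\cA,\phi)$ by moment--cumulant inversion. Self-adjointness is immediate: since $w=w^*$, comparing the block form of $w$ with that of $w^*$ forces $w_{11}^*=w_{11}$, $w_{22}^*=w_{22}$, and $w_{12}^*=w_{21}$, from which $X_1^*=X_1$ and $X_2^*=X_2$ follow by inspection.

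For $X_1$, block-diagonality makes $X_1^n$ block-diagonal with entries $w_{11}^n$ and $w_{22}^n$, so
\[
\phi(X_1^n) = \tfrac{1}{2}\bigl(\phi_1(w_{11}^n) + \phi_1(w_{22}^n)\bigr) = \phi_1(w_{11}^n),
\]
which by Lemma \ref{lemma:w_11} is the $n$-th moment of a Marchenko--Pastur element of parameter $2c$. By moment--cumulant inversion every free cumulant $\kappa_n(X_1,\dots,X_1)$ in $(\cA,\phi)$ equals $2c$.

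For $X_2$ I would multiply out the block form: $X_2^{2m+1}$ is off-diagonal (hence $\phi(X_2^{2m+1})=0$), while $X_2^{2m}$ is block-diagonal with $(1,1)$-entry $(w_{21}w_{12})^m$ and $(2,2)$-entry $(w_{12}w_{21})^m$, giving $\phi(X_2^{2m})=\phi_1\bigl((w_{21}w_{12})^m\bigr)$ by traciality. Expanding this scalar moment via the moment--cumulant formula in $(e_{11}\cA e_{11},\phi_1)$ and invoking the $R$-cyclicity of $\{w_{ij}\}$ from Remark \ref{rem:entry_cumulants}, only those $\pi\in NC(2m)$ survive whose blocks alternate between odd positions (where $w_{21}$ sits) and even positions (where $w_{12}$ sits) in cyclic order, each surviving block contributing a factor $2c$. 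The main obstacle I anticipate is a small combinatorial observation: within $NC(2m)$, this alternation requirement is equivalent to \emph{every block having even size}. Indeed, between two consecutive elements $i<j$ of a block, the non-crossing property forces the $j-i-1$ interior indices to decompose into whole blocks of $\pi$, so the total is even, and hence $i$ and $j$ have opposite parities; the same argument applied to the complement of the block handles the cyclic wrap-around. With this identification,
\[
\phi(X_2^{2m}) = \sum_{\substack{\pi\in NC(2m)\\ \text{all blocks even}}}(2c)^{|\pi|},
\]
which is precisely the $2m$-th moment produced by the cumulant sequence $\kappa_n(X_2,\dots,X_2)=2c$ for $n$ even and $0$ for $n$ odd. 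Moment--cumulant inversion finishes the proof.
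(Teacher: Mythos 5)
Your proof is correct, and for $X_1$ it follows the paper's argument exactly: pass to the corner via $\phi(X_1^n)=\phi_1(w_{11}^n)$, then invoke Lemma~\ref{lemma:w_11}. For $X_2$, however, you take a genuinely different route. The paper treats $X_2$ by appealing directly to \cite[Proposition 15.12]{ns}, which identifies the free cumulants of a self-adjoint off-diagonal $2\times 2$ element with the alternating $*$-cumulants of its corner entry; combined with Remark~\ref{rem:entry_cumulants} this gives the conclusion in one line. You instead re-derive that special case by hand: you compute $\phi(X_2^{2m})=\phi_1\bigl((w_{21}w_{12})^m\bigr)$, expand it via the moment--cumulant formula in $(e_{11}\cA e_{11},\phi_1)$, and use $R$-cyclicity to identify the surviving $\pi\in NC(2m)$. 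Your key combinatorial observation --- that a non-crossing partition has all blocks alternating in parity if and only if all blocks have even size --- is correct (your interval argument gives one direction; the other direction is the standard parity count around a cycle forced by the cyclic closure in the $R$-cyclicity condition, which you should state explicitly since you assert an equivalence but argue only one implication). The resulting sum $\sum_{\pi\ \mathrm{even\ blocks}}(2c)^{|\pi|}$ then matches the moments of an even variable with constant even cumulants $2c$, and moment--cumulant inversion closes the argument. The paper's route is shorter because it cites a ready-made result; yours is more elementary and self-contained, at the cost of having to establish the small non-crossing lemma.
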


\begin{proof}
We have $\phi(X_1^l) = \phi^{(1)}(w_{11}^l)$ so $X_1$ and
$w_{11}$ have the same cumulants, which by Remark
\ref{rem:entry_cumulants} are all $2c$. Because $X_2$ is off
diagonal and self-adjoint, it is an even operator. By
\cite[Proposition 15.12]{ns} the cumulants of $X_2$ are the
$*$-cumulants of $w_{21}$. In Remark
\ref{rem:entry_cumulants} we observed that these are all
$2c$.
\end{proof}

Our next goal is to show that $X_1$ and $X_2$ are free in
$(\cA, \phi)$. This is somewhat surprising in that $X_1$ and
$X_2^\ltr$ are not free.  By $X_2^\ltr$ we mean the matrix
$\begin{pmatrix} 0 & w_{12} \\ w_{21} & 0 \end{pmatrix}$.
To see this note that $\phi(X_1X_2^\ltr X_2^\ltr X_1) = 2c +
3 (2c)^2 + (2c)^3$ whereas if $X_1$ and $X_2$ were free we would
have $\phi(X_1 X_2^\ltr X_2^\ltr X_1) = (2c)^2 + (2c)^3$.
This gives another unexpected instance where a partial
transpose produces freeness, but this time at the level of
operators.

Now let us turn to the freeness of $X_1$ and $X_2$. Let 
\[
Y_1 = X_1, \ 
Y_2 = 
\begin{pmatrix}
w_{21}, &   0    \\
0       &  w_{12}
\end{pmatrix} 
\mbox{\ and\ }
Y_3 = 
\begin{pmatrix}
0   &   1    \\
1       &  0
\end{pmatrix}.
\]
Then $X_2 = Y_2 Y_3$. Let $i_1, \dots, i_n$ be such that
$\ker(i) < 1_n$. Let $r$ be the number of times $i = 2$, and
$q = n + r$. Then there are $j_1, j_2, \dots, j_q \in \{1,
2, 3\}$ such that
\[
X_{i_1} \dots X_{i_n} 
=
Y_{j_1} \cdots Y_{j_q}.
\]
We now apply the formula for cumulants with products as
entries \cite[Theorem 11.12]{ns}.  Then
\[
\kappa_n(X_{i_1}, \dots, X_{i_n})
=
\sum_{\pi \in NC(q)}
\kappa_\pi (Y_{j_1}, \dots, Y_{j_q})
\]
where the sum runs over all non-crossing partitions in
$NC(q)$ such that $\pi \vee \rho = 1_q$ and $\rho$ is the
non-crossing partition whose blocks are have either 1 or 2
elements, and the singletons are where $j_l = 1$ appears in
the string $Y_{j_1} \cdots Y_{j_q}$, and the pairs are $(l,
l+1)$ where $j_l = 2$ and $j_{l+1} = 3$. Since $\ker(i) <
1_n$ there must both singletons and pairs. Let us consider a
$\pi \in NC(q)$ which is such that $\pi \vee \rho = 1_q$ and
we shall show that by the $R$-cyclicity of $w$ we have
$\kappa_\pi (Y_{j_1}, \dots, Y_{j_q}) = 0$. Summing over all
such $\pi$ we get that $\kappa_n(X_{i_1}, \dots, X_{i_n}) =
0$. This shows that all mixed cumulants vanish and hence
that $X_1$ and $X_2$ are free as claimed.

\begin{lemma}\label{lemma:even-Y-3}
Given $p_1, \dots , p_s
\in[3]$ we have $\phi(Y_{p_1} \cdots Y_{p_s})= 0$ unless
$Y_3$ appears an even number of times.
\end{lemma}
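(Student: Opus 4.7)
The plan is to exploit the $M_2(\bC)$-block structure underlying $\cA$. By the matrix units from Lemma \ref{lemma:block_structure}, one has an identification $\cA \cong M_2(\bC) \otimes e_{11}\cA e_{11}$, and the state $\phi$ factors as the normalized trace on $M_2(\bC)$ tensored with $\phi_1$ on the corner $e_{11}\cA e_{11}$. Under this identification, one reads off directly that $Y_1 = E_{11}\otimes w_{11} + E_{22}\otimes w_{22}$ and $Y_2 = E_{11}\otimes w_{21} + E_{22}\otimes w_{12}$ both lie in the diagonal subalgebra of the $M_2$-component, whereas $Y_3 = E_{12} + E_{21}$ is purely off-diagonal.

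First I would expand the product $Y_{p_1}\cdots Y_{p_s}$ as a sum of elementary tensors. Each non-vanishing summand has an $M_2$-part of the form $E_{i_1 j_1}E_{i_2 j_2}\cdots E_{i_s j_s}$, which collapses to $E_{i_1 j_s}$ under the chaining condition $j_k = i_{k+1}$ for $1 \le k < s$. Each $Y_1$ or $Y_2$ factor contributes $E_{ii}$ for some $i\in\{1,2\}$ and therefore preserves the current index, while each $Y_3$ factor contributes $E_{12}$ or $E_{21}$ and therefore flips the index between $1$ and $2$.

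Consequently, if $m$ denotes the number of occurrences of $Y_3$ among $Y_{p_1}, \dots, Y_{p_s}$, then tracking the index sequence $i_1 \mapsto j_1 = i_2 \mapsto j_2 = \cdots \mapsto j_s$ shows that $j_s = i_1$ precisely when $m$ is even. Since the normalized trace on $M_2(\bC)$ vanishes on $E_{ij}$ whenever $i \ne j$, applying $\phi$ to $Y_{p_1}\cdots Y_{p_s}$ yields zero whenever $m$ is odd, which is exactly the claim. I anticipate no real obstacle here; the only care needed is to keep the $\phi$-versus-$\phi_1$ normalization consistent with the setup in Lemma \ref{lemma:block_structure}, which is handled cleanly by the tensor factorization of $\phi$.
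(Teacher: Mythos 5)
Your proof is correct and follows essentially the same approach as the paper's: the paper simply observes that $Y_1$ and $Y_2$ are diagonal in the $M_2$-block structure while $Y_3$ is off-diagonal, so an odd number of $Y_3$ factors produces an off-diagonal product whose trace vanishes. You have merely spelled out the index-chaining bookkeeping that the paper leaves implicit.
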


\begin{proof}
$Y_1$ and $Y_2$ are diagonal so $Y_{p_1} \cdots Y_{p_s}$
will be 0 on the diagonal unless $Y_3$ appears an even
number of times.  
\end{proof}

\begin{lemma}\label{lemma:even-Y-2-cumulant}
Given $p_1, \dots , p_s \in[3]$ we have $\kappa_s(Y_{p_1},
\dots, Y_{p_s})= 0$ unless $Y_3$ appears an even number of
times.
\end{lemma}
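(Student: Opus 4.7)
The plan is to deduce the cumulant statement from the moment statement of Lemma~\ref{lemma:even-Y-3} via the moment–cumulant formula. Write
\[
\kappa_s(Y_{p_1},\dots,Y_{p_s})
=\sum_{\pi\in NC(s)}\mu(\pi,1_s)\,\phi_\pi(Y_{p_1},\dots,Y_{p_s}),
\]
so it suffices to show that $\phi_\pi(Y_{p_1},\dots,Y_{p_s})=0$ for every $\pi\in NC(s)$ whenever the index $3$ appears an odd number of times in $(p_1,\dots,p_s)$.

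Suppose that $|\{l:p_l=3\}|$ is odd. For any partition $\pi=\{B_1,\dots,B_r\}$ of $[s]$, the counts $n_i=|\{l\in B_i:p_l=3\}|$ sum to the total number of occurrences of $3$. Since that total is odd, at least one $n_i$ is odd. On the block $B_i=\{l_1<\dots<l_k\}$, the factor contributing to $\phi_\pi$ is $\phi(Y_{p_{l_1}}\cdots Y_{p_{l_k}})$, and by Lemma~\ref{lemma:even-Y-3} this factor vanishes because $Y_3$ occurs $n_i$ times (odd) in the product. Hence the whole product $\phi_\pi(Y_{p_1},\dots,Y_{p_s})$ is $0$, and the sum defining $\kappa_s$ collapses to $0$.

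There is really no obstacle here beyond assembling these two ingredients; the statement is a direct parity consequence of Lemma~\ref{lemma:even-Y-3} together with the fact that the moment–cumulant formula expresses each cumulant as a $\pm$-combination of products of moments indexed by the blocks of non-crossing partitions. The only thing to notice is the elementary parity argument: an odd total cannot split into a sum of only even summands, so at least one block must carry an odd count of $Y_3$ and thereby force its moment factor to vanish.
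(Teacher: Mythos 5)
Your argument is correct and is essentially the paper's own proof: both invoke the moment–cumulant formula and then apply Lemma~\ref{lemma:even-Y-3} blockwise, with the observation that block counts of $Y_3$ sum to the total. You phrase the parity step in the contrapositive (odd total forces some block to have odd count), while the paper phrases it directly (each block even forces the total even), but these are the same argument.
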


\begin{proof}
We write
\[
\kappa_s(Y_{p_1}, \dots, Y_{p_s})= 
\sum_{\pi \in NC(s)}
\mu(\pi, 1_s) \phi_\pi(Y_{p_1}, \dots, Y_{p_s}).
\]
Given $\pi$, we have by Lemma \ref{lemma:even-Y-3}, that
each block of $\pi$ must contain an even number of $Y_3$'s,
or else $\phi_\pi(Y_{p_1}, \dots, Y_{p_s}) = 0$. Summing
over all blocks of $\pi$ we get that the number of $Y_3$'s
is even.
\end{proof}

\begin{definition}
Let $i_1, \dots, i_s \in [3]$ we say that the $s$-tuple has
the property (\textit{nvc}) if each non-zero entry of
$Y_{i_1} \cdots Y_{i_s}$ is of the form
\[
w_{u_1v_1} \cdots w_{u_kv_k}
\]
where $v_1 = u_2, v_2 = u_3, \dots, v_{k-1} = v_k$. Note
that we do not require $v_k = u_1$ as in $R$-cyclicity. We
say that the string has property (\textit{vc}) if it does
not have property (\textit{ncv}).
\end{definition}

\begin{remark}
We now describe the generic sequences with property
(\textit{nvc}). First we have any power of $Y_1$. The
product of two or more $Y_2$'s does not have property
(\textit{nvc}). No power of $Y_3$ has property
(\textit{nvc}), because all its entries are either 0 or 1.

Now suppose we start with a $Y_2$. We can only follow with a
$Y_1$ or a $Y_3$. So our basic reduced sequence is (with the
possibility that $k =0$)
\[
Y_2 \underbrace{Y_1 \cdots Y_1}_k Y_3 Y_2. 
\]
We can enhance this by putting an even power of $Y_3$
between any two letters above. Note that there cannot be an
odd number of $Y_3$'s between two $Y_1$'s as
\[
Y_1 Y_3 Y_1 =
\begin{pmatrix}
     0       &  w_{11}w_{22} \\
w_{22}w_{11} &      0        \\
\end{pmatrix}
\]
So the most general string starting and ending with a $Y_2$ is
\[
Y_2 Y_3^{l_1} Y_1^{k_1} Y_3^{l_2} Y_1^{k_2} \cdots
Y_1^{k_r} Y_3^{l_{r+1}} Y_2
\]
with $l_1, \dots, l_r$ even and $l_{r+1}$ odd.
\end{remark}

\begin{lemma}
Let $i_1, \dots, i_k$ be a string with property
(\textit{nvc}) which starts and ends with $Y_2$ and has no
other $Y_2$'s. Then the number of $Y_3$'s is odd.
\end{lemma}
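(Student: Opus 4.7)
The plan is to trace the matrix-entry chaining forced by (\textit{nvc}) through the product. Because $Y_1$ and $Y_2$ are diagonal while $Y_3$ is strictly off-diagonal, the $(u,v)$ entry of $Y_{i_1} \cdots Y_{i_k}$ is determined by a single path $c_0, c_1, \dots, c_k$ of indices in $\{1,2\}$: one starts at $c_0 = u$, keeps the index constant across each $Y_1$ or $Y_2$, and flips it across each $Y_3$. A $Y_1$ sitting at path-index $c$ contributes the scalar $w_{c,c}$, whereas a $Y_2$ sitting at path-index $c$ contributes $w_{\bar c, c}$, where $\bar c$ denotes the opposite index in $\{1,2\}$.

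Now let $p_1 < p_2 < \cdots < p_q$ enumerate the positions in the string occupied by a $Y_1$ or a $Y_2$, and write $c_j$ for the common path-index in a neighborhood of $p_j$. Reading off the $w$-indices gives $v_j = c_j$ in every case, while $u_{j+1} = c_{j+1}$ or $u_{j+1} = \bar c_{j+1}$ according as $p_{j+1}$ carries a $Y_1$ or a $Y_2$. The (\textit{nvc}) chaining condition $v_j = u_{j+1}$ therefore says $c_j = c_{j+1}$ when $p_{j+1}$ is a $Y_1$, and $c_j = \bar c_{j+1}$ when $p_{j+1}$ is a $Y_2$. Since $c_{j+1}$ is obtained from $c_j$ by one toggle per $Y_3$ between positions $p_j$ and $p_{j+1}$, this forces the number of $Y_3$'s in that subinterval to be even in the first case and odd in the second.

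In our situation, $p_1 = 1$ and $p_q = k$ are the only $Y_2$'s, so there are no $Y_3$'s outside $[p_1, p_q]$, and every intermediate $p_2, \dots, p_{q-1}$ is a $Y_1$. Applying the parity rule, the subinterval $(p_j, p_{j+1})$ contains an even number of $Y_3$'s for each $j = 1, \dots, q-2$ (each leads into a $Y_1$), while the final subinterval $(p_{q-1}, p_q)$ contains an odd number (it leads into the closing $Y_2$). Summing over $j$ gives the total count of $Y_3$'s in the string, which is therefore odd. This is consistent with the explicit normal form displayed in the preceding remark, where the $Y_3$-block exponents $l_1, \dots, l_r$ are even and only $l_{r+1}$ is odd.

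One point worth double-checking is that (\textit{nvc}) imposes the same parity constraints regardless of which nonzero entry one uses to read off the chain: flipping $c_0$ simultaneously flips every $c_j$ but preserves both equalities $c_j = c_{j+1}$ and $c_j = \bar c_{j+1}$, so the two entries give identical parity data. Beyond this the argument is a mechanical parity count, and I do not anticipate a genuine obstacle.
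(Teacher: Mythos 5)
Your proof is correct and rests on the same underlying observation as the paper's: the (\textit{nvc}) chaining condition forces an even number of $Y_3$'s before each interior $Y_1$ and an odd number before the closing $Y_2$, so the total is odd. The paper simply cites the normal form $Y_2 Y_3^{l_1} Y_1^{k_1} \cdots Y_1^{k_r} Y_3^{l_{r+1}} Y_2$ asserted (somewhat informally) in the preceding remark, whereas your path-index/toggle bookkeeping derives those parity constraints directly from the definition of (\textit{nvc}), so your argument is in fact a self-contained proof of both the remark and the lemma.
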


\begin{proof}
We just observed that the number of $Y_3$'s is $l_1 + \cdots
+ l_r + l_{r+1}$ which is odd.
\end{proof}

\begin{lemma}\label{lemma:balanced-blocks}
If $\pi \in NC(q)$ and $\pi \vee \rho = 1_q$ and
$\kappa_\pi(Y_{i_1}, \dots, Y_{i_q}) \not = 0$ then each
block of $\pi$ must contain the same number of $Y_2$'s as
$Y_3$, and both numbers are even.
\end{lemma}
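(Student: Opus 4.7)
The plan is to reduce the lemma's conclusion to block-by-block statements via the factorization $\kappa_\pi = \prod_{B \in \pi} \kappa_{|B|}\bigl((Y_{i_l})_{l \in B}\bigr)$, so that the hypothesis $\kappa_\pi \neq 0$ forces each $\kappa_{|B|}\bigl((Y_{i_l})_{l \in B}\bigr) \neq 0$. Three block-wise necessary conditions must then be derived.

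The evenness of $\#Y_3$ in each block is immediate from Lemma \ref{lemma:even-Y-2-cumulant} applied to each block cumulant. The evenness of $\#Y_2$ follows by a symmetry argument. Using the tensor decomposition $\cA \cong M_2(\bC) \otimes \cB$, where $\cB$ is generated by the $w_{ij}$'s, I would introduce the $\phi$-preserving automorphism $\beta = \mathrm{Ad}_U \otimes \beta_\cB$ with $U = \mathrm{diag}(1,-1)$ and $\beta_\cB(w_{ij}) = (-1)^{i+j}w_{ij}$. The map $\beta_\cB$ is well-defined on $\cB$ and $\phi_1$-preserving because the only nonzero $R$-cyclic cumulants $\kappa_l^{(1)}$ occur on cyclic index sequences where $\sum_l(i_l+j_l) = 2\sum_l i_l$ is even, so the induced scaling is trivial. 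A direct calculation on the defining formula of each $Y_p$ gives $\beta(Y_1) = Y_1$, $\beta(Y_2) = -Y_2$ and $\beta(Y_3) = -Y_3$, so $\beta$-invariance of the block cumulant yields $\kappa_{|B|} = (-1)^{\#Y_2 + \#Y_3}\kappa_{|B|}$, forcing $\#Y_2 + \#Y_3$ (and hence, together with the first step, $\#Y_2$ itself) to be even in each block.

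The remaining conclusion $\#Y_2 = \#Y_3$ is the technical heart of the lemma. I plan to expand $\kappa_{|B|}\bigl((Y_{i_l})_{l \in B}\bigr)$ via the moment-cumulant formula, with each moment $\phi(\prod_{l \in B''} Y_{i_l})$ (for $B'' \in \sigma \in NC(|B|)$) computed explicitly using the $2 \times 2$ matrix form of $Y_p$. The matrix indices along each sub-block chain must close up cyclically under the flips induced by $Y_3$'s, and the resulting $w$-product is evaluated by $\phi_\cB$ via the $R$-cyclicity of $\{w_{ij}\}$. Tracking the cyclic closure of matrix indices against the $R$-cyclic matching of $w$-indices---which requires the gap-count of $Y_3$'s between two consecutive non-$Y_3$ positions to be even or odd according to whether the next position carries a $Y_1$ or a $Y_2$---yields the parity equivalence $\#Y_2 \equiv \#Y_3 \pmod 2$ in each block. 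Upgrading this parity to the strict equality uses the $\pi \vee \rho = 1_q$ hypothesis combined with the non-crossing structure of $\pi$: any imbalance $\#Y_2 > \#Y_3$ in one block propagates through the $\rho$-pair graph to a companion block whose cumulant factor either forces a singleton $Y_2$- or $Y_3$-factor (for which $\kappa_1(Y_2) = \kappa_1(Y_3) = 0$) or produces a non-cyclic $w$-pattern with $\phi_\cB$ vanishing, contradicting $\kappa_\pi \neq 0$.

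The main obstacle is precisely this last upgrade from parity to equality. The parity condition follows cleanly from the matrix-chain and $R$-cyclic analyses, but the strict equality requires a delicate combinatorial argument intertwining the non-crossing structure of $\pi$ with the adjacent-pair structure of $\rho$. It is here that the $\pi \vee \rho = 1_q$ hypothesis plays its essential role, since without global connectivity one cannot rule out ``block-local'' balanced but ``pair-wise'' unbalanced configurations.
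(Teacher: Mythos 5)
Your automorphism argument for the evenness of $\#Y_2$ is a genuinely nice observation that is not in the paper: the grading $\beta = \mathrm{Ad}_{\mathrm{diag}(1,-1)} \otimes \beta_\cB$ with $\beta_\cB(w_{ij}) = (-1)^{i+j}w_{ij}$ is $\phi$-preserving (the $R$-cyclic cumulant indices always satisfy $\sum_l(i_l + j_l)$ even, so $\beta_\cB$ fixes the joint $*$-distribution of the $w_{ij}$), and $\beta(Y_1) = Y_1$, $\beta(Y_2) = -Y_2$, $\beta(Y_3) = -Y_3$ force each block cumulant to vanish unless $\#Y_2 + \#Y_3$ is even. That is correct and self-contained. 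However it is also redundant once one has the lemma's core claim $\#Y_2 = \#Y_3$ (since $\#Y_3$ even is already known), and it is precisely that core claim where your proposal has a real gap.

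Your route to $\#Y_2 = \#Y_3$ --- extract a ``parity equivalence'' from a matrix-chain / $R$-cyclicity expansion and then ``upgrade parity to equality by propagation through the $\rho$-pair graph'' --- is not a complete argument, and you say as much. The parity equivalence you aim for is already automatic from your two evenness facts, so it carries no extra information; and the propagation step is only a sketch, with the dichotomy (``forces a singleton factor, or produces a non-cyclic $w$-pattern'') asserted rather than derived. What is missing is an inequality, not a parity: the lemma immediately preceding this one in the paper already shows that in a nonvanishing block the cyclic arc between two consecutive $Y_2$'s contains an \emph{odd} --- in particular at least one --- number of $Y_3$'s. Summing over the arcs gives $\#Y_3 \geq \#Y_2$ (indeed $\#Y_3 - \#Y_2$ even and nonnegative) in \emph{every} block. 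Since the global string has $\#Y_2 = \#Y_3$, no block can have excess, so every block is balanced; the evenness of $\#Y_2$ then follows directly from that of $\#Y_3$ without any automorphism. Your proposal never isolates this local inequality and instead reaches for a more elaborate propagation scheme that you have not worked out, so as written the proof does not go through.
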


\medskip\noindent
\begin{proof}
We have just observed that the number of $Y_3$'s between
$Y_2$'s is odd. Thus to go all the way round a cycle the
number of $Y_3$'s is equal to the number of $Y_2$ plus an
even number which might be 0. However in our whole string
the number of $Y_2$'s and $Y_3$'s is the same. If one cycle
had an excess of $Y_3$'s then another cycle would have a
deficit. Thus all cycles must be balanced. Since we already
know that each cycle has an even number of $Y_3$'s it also
has an equal even number of $Y_2$'s.
\end{proof}

\begin{lemma}
Let $i_1\, \dots, i_q \in [3]$ be such that $\ker(i) < 1_q$
and $\pi \in NC(q)$ be such that $\pi \vee \rho = 1_q$. Then
$\kappa_\pi(Y_{i_1}, \dots, Y_{i_k}) = 0$.
\end{lemma}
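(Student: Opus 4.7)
The plan is to expand every $Y_i$ matricially and invoke the $R$-cyclicity of $\{w_{ab}\}$ (Remark \ref{rem:entry_cumulants}) together with freeness from the matrix units (Lemma \ref{lemma:block_structure}).

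First, by Lemma \ref{lemma:balanced-blocks} I may assume each block of $\pi$ contains an equal and even number of $Y_2$'s and $Y_3$'s, for otherwise $\kappa_\pi=0$ trivially. Then I expand
\[
Y_1 = w_{11}e_{11}+w_{22}e_{22},\quad Y_2 = w_{21}e_{11}+w_{12}e_{22},\quad Y_3 = e_{12}+e_{21},
\]
so that multilinearity of $\kappa_\pi$ writes the cumulant as a sum over choices $a_k\in\{1,2\}$, one per position $k\in[q]$. Every summand has the form $\prod_{V\in\pi}\kappa_{|V|}(u_{V(1)}m_{V(1)},\dots,u_{V(|V|)}m_{V(|V|)})$, with $u_k\in\{w_{ab},1\}$ lying in the subalgebra generated by $w$ and $m_k$ a single matrix unit.

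By the freeness of Lemma \ref{lemma:block_structure} together with the standard formula for cumulants of products of elements from two free subalgebras, each block-cumulant further decomposes as
\[
\sum_{\sigma\in NC(|V|)}\kappa_\sigma(u_{V(1)},\dots,u_{V(|V|)})\,\kappa_{\text{Kr}(\sigma)}(m_{V(1)},\dots,m_{V(|V|)}),
\]
where $\text{Kr}$ denotes the Kreweras complement. By $R$-cyclicity, $\kappa_\sigma(u_{V(1)},\dots,u_{V(|V|)})$ vanishes unless on every block of $\sigma$ the chosen $w$-indices match cyclically — the second index of each $w$ equals the first of the next, wrapping around. Since $Y_1$ supplies $w_{aa}$ and $Y_2$ supplies $w_{\bar a a}$, this is a rigid parity condition on the $a_k$'s inside $V$; the companion $\kappa_{\text{Kr}(\sigma)}$ of matrix units imposes a complementary condition on the $m_k$'s.

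The crucial last step is to show these local constraints, combined with the global matrix-unit product around the cycle $\gamma=(1,2,\dots,q)$ underlying $\phi$, cannot all be satisfied when $\pi\vee\rho=1_q$ and $\ker(i)<1_q$. The hypothesis $\pi\vee\rho=1_q$ means the blocks of $\pi$ are linked through the adjacent $Y_2Y_3$-pairs of $\rho$; propagating the per-block $R$-cyclic parities along these links across $\gamma$ forces a rigid global matrix-unit pattern that is inconsistent with the presence of more than one $Y$-type guaranteed by $\ker(i)<1_q$. Hence every summand vanishes and $\kappa_\pi(Y_{i_1},\dots,Y_{i_q})=0$. The main obstacle is precisely this propagation step: carefully composing local $R$-cyclic conditions with the global $\gamma$-trace compatibility, using both the non-crossing structure of $\pi$ and $\pi\vee\rho=1_q$. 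Non-crossedness is what makes the propagation inductive --- stack-like --- so the argument likely proceeds by peeling off an innermost block of $\pi$, forcing its neighboring $\rho$-pairs to trivialize, and reducing to a shorter string.
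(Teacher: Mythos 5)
Your proposal is not a proof: the step you yourself flag as ``the main obstacle'' --- showing that the local $R$-cyclic constraints on the per-block matrix-unit choices cannot be globally reconciled when $\pi \vee \rho = 1_q$ and $\ker(i) < 1_q$ --- is precisely where the content lies, and you only conjecture that a ``stack-like'' peeling argument works, without carrying it out. As written the argument is incomplete, and it is also considerably more machinery than the situation requires.

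The paper's proof is much shorter and never decomposes the $Y_i$ into the $w_{ab}e_{ab}$ basis. Fix a block $V$ of $\pi$ containing a position $l$ with $j_l = 1$. A $Y_1$-position is a singleton of $\rho$, so $\pi \vee \rho = 1_q$ forces $V$ to contain a position carrying $Y_2$ or $Y_3$, and Lemma \ref{lemma:balanced-blocks} then forces $V$ to contain a $Y_3$. For the contribution of $V$ to be nonzero (property (\textit{nvc})), one finds consecutive elements $l, l'$ of $V$ in cyclic order with $j_l = 1$ and $j_{l'} = 3$. Since $\pi$ is non-crossing, it restricts to a partition of the interval strictly between $l$ and $l'$, and Lemma \ref{lemma:balanced-blocks} forces each block there to carry equal numbers of $Y_2$'s and $Y_3$'s. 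But in the original string every $Y_2$ is immediately followed by its partner $Y_3$, and the $Y_3$ at $l'$ has been excised, so that interval has one more $Y_2$ than $Y_3$ --- a contradiction, hence $\kappa_\pi = 0$. This elementary counting argument sidesteps entirely the matrix-unit bookkeeping and global propagation that make your route difficult to close.
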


\begin{proof}
Let $V$ be a block of $\pi$ that contains a $l$ such that
$j_l = 1$. Then $(l)$ is a block of $\rho$. Since we are
assuming that $\pi \vee \rho = 1_q$ there must be $l_0 \in
V$ with $j_{l_0} \in \{2, 3\}$.  If the contribution of this
block to $\kappa_\pi(Y_{i_1}, \dots, Y_{i_k})$ is not 0 then
there must be a $Y_1$ followed by a $Y_3$. So we may assume
that we have a $l$ and $l'$ such that $j_l =1$, $j_{l'} = 3$
and $l'$ follows $l$ in $V$. We have that $\pi$ restricts to
a non-crossing partition of $[l+1, l'-1]$. Each block in
this restriction contains the same number of $Y_2$'s as
$Y_3$'s by Lemma \ref{lemma:balanced-blocks}. However this
impossible because in the original string $Y_{i_1}, \dots,
Y_{i_k}$ a $Y_2$ is always followed by a $Y_3$ and we have
removed one $Y_3$. Thus $\kappa_\pi(Y_{i_1}, \dots, Y_{i_k})
= 0$.
\end{proof}

\begin{theorem}\label{thm:freeness-X-1-and-X-2}
$X_1$ and $X_2$ are free in $(\cA, \phi)$. 
\end{theorem}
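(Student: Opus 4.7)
The plan is to prove freeness of $X_1$ and $X_2$ by showing that every mixed free cumulant vanishes, which is the standard characterization of freeness (see \cite[Theorem 11.20]{ns}). All the structural work has been carried out in the preceding lemmas; the remaining step is to assemble them.

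Fix $n \geq 1$ and $i_1, \dots, i_n \in \{1,2\}$ with $\ker(i) < 1_n$, so that both $X_1$ and $X_2$ appear in the tuple. Using the factorization $X_2 = Y_2 Y_3$ (and $X_1 = Y_1$), rewrite
\[
X_{i_1} \cdots X_{i_n} = Y_{j_1} \cdots Y_{j_q},
\]
where $q = n + r$ and $r = |\{k : i_k = 2\}|$. Let $\rho \in NC(q)$ be the non-crossing partition whose blocks are the singletons $\{l\}$ at positions where $j_l = 1$, together with the adjacent pairs $\{l, l+1\}$ arising from each $Y_2 Y_3$. Since $\ker(i) < 1_n$, the partition $\rho$ contains both singletons and pairs.

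Apply the formula for cumulants with products as entries \cite[Theorem 11.12]{ns}:
\[
\kappa_n(X_{i_1}, \dots, X_{i_n}) \;=\; \sum_{\substack{\pi \in NC(q) \\ \pi \vee \rho = 1_q}} \kappa_\pi(Y_{j_1}, \dots, Y_{j_q}).
\]
The preceding lemma says that under exactly these hypotheses on $\pi$ and on the $Y_{j_l}$-string, every summand on the right-hand side vanishes. Hence $\kappa_n(X_{i_1}, \dots, X_{i_n}) = 0$ for every mixed tuple, and $X_1$ and $X_2$ are free in $(\cA, \phi)$.

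The main obstacle is not in this assembly step, which is essentially bookkeeping, but rather was already addressed in the chain of lemmas culminating in the vanishing of $\kappa_\pi(Y_{j_1},\dots,Y_{j_q})$: namely, the combined use of $R$-cyclicity of $\{w_{ij}\}$, the parity restriction on the number of $Y_3$'s in each block (Lemma \ref{lemma:even-Y-2-cumulant}), and the balance between $Y_2$ and $Y_3$ counts in each block of $\pi$ (Lemma \ref{lemma:balanced-blocks}). Once those are in hand, the present theorem follows immediately by the argument above.
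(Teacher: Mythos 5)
Your proof is correct and follows essentially the same route as the paper: factor $X_2 = Y_2 Y_3$, invoke the products-as-entries cumulant formula from \cite[Theorem 11.12]{ns}, and use the preceding chain of lemmas ($R$-cyclicity, the $Y_3$-parity constraint, and the $Y_2$--$Y_3$ balance within each block of $\pi$) to kill every summand $\kappa_\pi(Y_{j_1},\dots,Y_{j_q})$. This is precisely the assembly the paper carries out in the paragraph preceding the theorem and then cites in its one-line proof.
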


\begin{proof}
We have just shown that by the formula for cumulants with
products for entries we have that mixed cumulants
vanish. Thus $X_1$ and $X_2$ are free.
\end{proof}

\begin{remark}
The distribution of $w^\ltr$ in $(\cA, \phi)$ is the limit
distribution of $W^\ltr$ which is the same as $W^\rtr$. Thus
the distribution of $d_1w^\rtr$ is the same as that of
$d_1w^\ltr$.
\end{remark} 
 
\begin{theorem}\label{thm:free-additive-version}
For $d_1 = 2$ and $p/(d_1d_2) \rightarrow c$ the
limit distribution of $2W^\ltr$ is the free additive
convolution of a Marchenko-Pastur law with parameter $2c$ and
an even operator with all even cumulants equal to $2c$.
\end{theorem}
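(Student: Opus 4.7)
The plan is to simply assemble the ingredients already prepared in the section. By Lemma \ref{lemma:block_structure} the joint $*$-distribution of $\{W, E_{11}, E_{12}, E_{21}, E_{22}\}$ converges to that of $\{w, e_{11}, e_{12}, e_{21}, e_{22}\}$, so in particular $2W^\ltr$ converges in $*$-distribution to $2w^\ltr$. Thus it suffices to identify the distribution of $2w^\ltr$ inside $(\cA, \phi)$.

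The next step is to use the block decomposition of $2w^\ltr$ with respect to the matrix units $\{e_{ij}\}$. Writing $2w^\ltr$ as the sum of its diagonal and off-diagonal parts gives immediately the decomposition
\[
2w^\ltr \;=\; X_1 + X_2,
\]
where $X_1$ and $X_2$ are the operators introduced just before Lemma \ref{lemma:X-cumulants}. By that lemma, $X_1$ has the Marchenko-Pastur distribution with parameter $2c$, and $X_2$ is an even self-adjoint operator whose even free cumulants are all equal to $2c$.

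Finally, Theorem \ref{thm:freeness-X-1-and-X-2} asserts that $X_1$ and $X_2$ are free in $(\cA, \phi)$. Combining the three facts, the distribution of $2w^\ltr = X_1 + X_2$ is, by definition of the free additive convolution, the free convolution of the Marchenko-Pastur law with parameter $2c$ and the even law with all even cumulants equal to $2c$, which is exactly the claim.

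There is essentially no obstacle left at this stage: the genuine work, namely the freeness of $X_1$ and $X_2$ (which was surprising since $X_1$ and $X_2^\ltr$ are \emph{not} free), was already carried out via the $R$-cyclicity of $\{w_{ij}\}$ and the cumulant-with-products calculation using $X_2 = Y_2 Y_3$. Once Theorem \ref{thm:freeness-X-1-and-X-2} and Lemma \ref{lemma:X-cumulants} are in hand, the present theorem is an immediate corollary, so the proof will be only a few lines long.
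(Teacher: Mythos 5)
Your proof is correct and matches the paper's (implicit) approach exactly: the paper states Theorem~\ref{thm:free-additive-version} without a separate proof precisely because it is an immediate assembly of Lemma~\ref{lemma:block_structure}, the block decomposition $2w^\ltr = X_1 + X_2$, Lemma~\ref{lemma:X-cumulants}, and Theorem~\ref{thm:freeness-X-1-and-X-2}. You have correctly identified and chained together these ingredients, and the observation that the real work lies in Theorem~\ref{thm:freeness-X-1-and-X-2} is accurate.
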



\section{Asymptotic Freeness}\label{sec:asymptotic-freeness}

Since $ W $ is unitarily invariant, a consequence of the
results from \cite{mp2} is that $ W $ and $ W^T $ are
asymptotically free if $ d_1d_2 \rightarrow \infty $. In
this section we will present the main results of the paper,
which, using the relation form Theorem
\ref{thm:exp_mixed_mom}, gives an improvement of the result
mentioned above.
 
 \begin{theorem}\label{thm:main}
 If $ d_1\rightarrow \infty $ and  
   $ d_2 \rightarrow \infty  ,$
    then the family 
    $\{ W, W^T, W^\Gamma,\ab W^\ltr \} $ is asymptotically free.
 \end{theorem}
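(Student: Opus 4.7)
The plan is to apply Theorem \ref{thm:exp_mixed_mom} to an arbitrary mixed moment of the four matrices, use Proposition \ref{prop:sigma_non_crossing} to isolate the permutations that survive in the limit, and recognize the remaining sum as the joint moment of a free family with the correct marginal distributions.

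First I would fix a word $W^{(\epsilon_1,\eta_1)}\cdots W^{(\epsilon_n,\eta_n)}$ and let $i\colon[n]\to\{1,2,3,4\}$ encode which of $W,W^\ltr,W^\rtr,W^\str$ sits in each position. By Theorem \ref{thm:exp_mixed_mom},
\[
\E(\tr\otimes\tr(W^{(\epsilon_1,\eta_1)}\cdots W^{(\epsilon_n,\eta_n)})) = \sum_{\sigma\in S_n}\Big(\frac{p}{d_1d_2}\Big)^{\#(\sigma)} d_1^{f_\epsilon(\sigma)}\, d_2^{f_\eta(\sigma)},
\]
and Proposition \ref{prop:sigma_non_crossing} (applied to both $\epsilon$ and $\eta$) says that as $d_1,d_2\to\infty$ only those $\sigma$ with $f_\epsilon(\sigma)=f_\eta(\sigma)=0$ survive. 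These are precisely the $\sigma$ for which both $\epsilon$ and $\eta$ are constant on every cycle of $\sigma$ (equivalently $\sigma\le\ker(i)$) and both $\sigma_\epsilon$ and $\sigma_\eta$ lie in $S_{NC}(n)$.

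Next I would read the two non-crossing conditions cycle by cycle. By Lemma \ref{lemma:sigma_epsilon}, on a cycle $V$ of $\sigma$ the restriction $\sigma_\epsilon|_V$ equals $\sigma|_V$ when the common value of $\epsilon$ on $V$ is $+1$ and $\sigma^{-1}|_V$ otherwise, and analogously for $\sigma_\eta$. For blocks colored $W$ this forces $\sigma|_V$ to be the (unique) non-crossing orientation of $V$, and for blocks colored $W^\str$ it forces $\sigma^{-1}|_V$ to be non-crossing. For blocks colored $W^\ltr$ or $W^\rtr$ the signs of $\epsilon$ and $\eta$ disagree, so one of $\sigma_\epsilon|_V,\sigma_\eta|_V$ equals $\sigma|_V$ and the other $\sigma^{-1}|_V$; both being non-crossing forces $\sigma|_V=\sigma^{-1}|_V$, i.e.\ $|V|\le 2$ (the local avatar of Lemma \ref{lemma:non-crossing-inverse}). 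The surviving $\sigma$'s are thus in bijection with the $\pi\in NC(n)$ satisfying $\pi\le\ker(i)$ every one of whose blocks colored $W^\ltr$ or $W^\rtr$ has size at most $2$; each such $\pi$ determines a unique $\sigma$ by installing the prescribed orientation on each cycle.

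Consequently the limit moment equals $\sum_\pi c^{\#(\pi)}$, summed over these admissible $\pi$. By the moment-cumulant formula this is precisely the joint moment of a free family $\{w,w^\ltr,w^\rtr,w^\str\}$ in which $w,w^\str$ are Marchenko-Pastur (free cumulants all equal to $c$) and $w^\ltr,w^\rtr$ are shifted semi-circular ($\kappa_1=\kappa_2=c$, $\kappa_m=0$ for $m\ge 3$), since for a free family only non-crossing partitions with $\pi\le\ker(i)$ contribute and the shifted semi-circular constraint kills every block of size $\ge 3$ colored $W^\ltr$ or $W^\rtr$. Matching the two expressions yields the asymptotic freeness. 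The main obstacle is the middle step: converting the global conditions $\sigma_\epsilon,\sigma_\eta\in S_{NC}(n)$ into the clean cycle-by-cycle description requires Biane's theorem (Remark \ref{remark:non-crossing_permutations}) together with Lemma \ref{lemma:non-crossing-inverse}, and one must verify that the correspondence with admissible $\pi$ is exactly one-to-one so the two counts coincide.
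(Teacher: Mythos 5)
Your proposal is correct and follows essentially the same route as the paper: expand the mixed moment via Theorem \ref{thm:exp_mixed_mom}, use Lemma \ref{lemma:negativity_of_f} and Proposition \ref{prop:sigma_non_crossing} to reduce to those $\sigma$ on whose cycles $\epsilon$ and $\eta$ are constant with $\sigma_\epsilon,\sigma_\eta$ non-crossing, and then read the surviving sum as a free-cumulant expansion in which only monochromatic blocks contribute. The only difference is cosmetic: you additionally carry out the cycle-by-cycle analysis (forcing $|V|\le 2$ on $W^\ltr$- and $W^\rtr$-colored blocks, via Lemma \ref{lemma:non-crossing-inverse}) so as to match the limit against an explicit free family with the correct marginals, whereas the paper concludes freeness directly from the monochromaticity of the blocks and identifies the marginals separately in Theorem \ref{thm:distrib}.
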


\begin{proof}

By Theorem \ref{thm:exp_mixed_mom} we have that
\[
\E(\tr \otimes \tr(W^{(\epsilon_1, \eta_1)} \cdots
W^{(\epsilon_n, \eta_n)}))
= 
\sum_{\sigma\in S_n}
\bigg(\frac{p}{d_1d_2}\bigg)^{\#(\sigma)}
d_1^{\ f_\epsilon(\sigma)}
d_2^{\ f_\eta(\sigma)}
\]
and by Lemma \ref{lemma:negativity_of_f} and Proposition
\ref{prop:sigma_non_crossing} we have that
\begin{itemize}

\item
$f_\epsilon(\sigma), f_\eta(\sigma) \leq 0 $ for all
  $\sigma, \epsilon$, and $\eta$;

\item
$f_\epsilon(\sigma), f_\eta(\sigma) <0 $ unless $\epsilon$
  and $\eta$ are constant on the cycles of $\sigma$;

\item
$f_\epsilon(\sigma) <0 $ unless $\sigma_\epsilon$ is
  non-crossing.
\end{itemize}
Thus when $d_1, d_2 \rightarrow \infty$ and $ \displaystyle
 \frac{p}{d_1 d_2} \rightarrow c$ 
 we need only consider $\sigma$'s for
which
\begin{enumerate}

\item
$\epsilon$ and $\eta$ are constant on the cycles of $\sigma$;

\item
both $\sigma_\epsilon$ and $\sigma_\eta$ are non-crossing.

\end{enumerate}

Note that as
partitions $\sigma$, $\sigma_\epsilon$, and $\sigma_\eta$
are the same, as the only possible difference between them
is whether we reverse the order of elements in a cycle of
$\sigma$. Thus we have shown that the limit when $d_1, d_2 \rightarrow \infty $ of an arbitrary mixed moment
can be written as a sum over non-crossing partitions; that
means that the terms that appear are the free cumulants of
the mixed moment we are considering. However, by
(\textit{i}), the blocks of $\sigma$ only connect
$W^{(\epsilon_i, \eta_i)}$ to $W^{(\epsilon_j,\eta_j)}$ if
$(\epsilon_i, \eta_i) = (\epsilon_j, \eta_j)$. This means we
have shown that mixed cumulants vanish and this implies
the conclusion.

\end{proof}

\begin{theorem}
\emph{(\textit{i})}
If $d_1 \rightarrow \infty $ and $ d_2 $ is fixed, then the
family $\{ W, W^\Gamma \} $ is asymptotically free from the
family $\{ W^T, W^\ltr\} $, but $ W $ is \emph{not}
asymptotically free from $ W^\Gamma $, nor is $ W^T $ from
$W^\ltr $.

\smallskip\noindent (\textit{ii}) If $ d_1 $ is fixed and
$d_2 \rightarrow \infty $, then the family $ \{ W, W^\ltr \}
$ is asymptotically free from the family $\{ W^T, W^\Gamma
\} $ but $ W $ is \emph{not} asymptotically free from $
W^\Gamma $, nor is $ W^T $ from $W^\Gamma $.
\end{theorem}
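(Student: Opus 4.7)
The plan is to apply Theorem \ref{thm:exp_mixed_mom} in each of the two regimes and to identify the permutations $\sigma \in S_n$ that survive in the limit. For part (\textit{i}), since $d_1 \to \infty$ while $d_2$ stays fixed, the factor $d_1^{f_\epsilon(\sigma)}$ kills every $\sigma$ with $f_\epsilon(\sigma) < 0$. By Lemma \ref{lemma:negativity_of_f} and Proposition \ref{prop:sigma_non_crossing}, the surviving $\sigma$ are precisely those for which $\epsilon$ is constant on each cycle of $\sigma$ and $\sigma_\epsilon \in S_{NC}(n)$. Since $W$ and $W^\Gamma$ both correspond to $\epsilon = +1$ and $W^T, W^\ltr$ both correspond to $\epsilon = -1$, this constancy condition is exactly the statement that no cycle of $\sigma$ connects an index of $\{W, W^\Gamma\}$ with an index of $\{W^T, W^\ltr\}$. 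Using Biane's correspondence (Remark \ref{remark:non-crossing_permutations}), the limit mixed moment becomes a non-crossing expansion whose blocks each lie entirely inside one family; by the standard $NC(n)$ moment-cumulant inversion (as in the proof of Theorem \ref{thm:main}), this says precisely that every mixed cumulant spanning the two families vanishes, establishing asymptotic freeness of $\{W, W^\Gamma\}$ from $\{W^T, W^\ltr\}$.

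For the non-freeness of two elements inside the same family, it is enough to exhibit a single second mixed moment that disagrees with freeness. Taking $n = 2$, $\epsilon = (+1, +1)$, and $\eta = (+1, -1)$ in Theorem \ref{thm:exp_mixed_mom}, the only two permutations $\mathrm{id}$ and $(1\,2)$ contribute $(p/(d_1d_2))^2$ and $(p/(d_1d_2))/d_2$ respectively, giving
\[
\E(\tr \otimes \tr(W W^\Gamma)) \longrightarrow c^2 + \frac{c}{d_2}
\]
as $d_1 \to \infty$. Since both limit elements have mean $c$, this strictly exceeds the value $\phi(W)\phi(W^\Gamma) = c^2$ required by freeness, so $W$ and $W^\Gamma$ are not asymptotically free. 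An identical computation with $\epsilon = (-1, -1)$ and $\eta = (-1, +1)$ gives $\E(\tr \otimes \tr(W^T W^\ltr)) \to c^2 + c/d_2 \ne c^2$, so $W^T$ and $W^\ltr$ are not asymptotically free either.

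Part (\textit{ii}) is strictly symmetric to part (\textit{i}): the roles of $(d_1, \epsilon)$ and $(d_2, \eta)$ are swapped. Now only the $\sigma$ with $f_\eta(\sigma) = 0$ survive, which forces $\eta$ to be constant on the cycles of $\sigma$ and hence prevents any cycle from mixing an index of $\{W, W^\ltr\}$ (both with $\eta = +1$) with an index of $\{W^T, W^\Gamma\}$ (both with $\eta = -1$); the same cumulant argument yields the asymptotic freeness of these two families. The same second-moment technique produces $\E(\tr \otimes \tr(W W^\ltr)) \to c^2 + c/d_1$ and $\E(\tr \otimes \tr(W^T W^\Gamma)) \to c^2 + c/d_1$, both strictly larger than $c^2$, so $W$ is not asymptotically free from $W^\ltr$ and $W^T$ is not asymptotically free from $W^\Gamma$.

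The main technical difficulty is the passage from the surviving moment sum to the cumulant side of the freeness statement: one must verify that the remaining (possibly non-trivial) powers of $d_2$ in (\textit{i}) or $d_1$ in (\textit{ii}) are absorbed into cumulants whose arguments all lie in a single family, rather than leaking into genuinely mixed cumulants. This is the same bookkeeping already performed in the proof of Theorem \ref{thm:main}, only with one of the two constancy conditions relaxed.
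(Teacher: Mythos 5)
Your proposal is correct and follows essentially the same route as the paper: apply Theorem~\ref{thm:exp_mixed_mom}, use Lemma~\ref{lemma:negativity_of_f} and Proposition~\ref{prop:sigma_non_crossing} to isolate the $\sigma$'s surviving the $d_1\to\infty$ limit, observe that the $\epsilon$-constancy condition prevents any cycle from connecting the two families (so mixed cumulants across the families vanish), and then exhibit a nonvanishing mixed second moment to show non-freeness within each family. Your $n=2$ computation of $\E(\tr\otimes\tr(WW^\Gamma)) = (p/(d_1d_2))^2 + (p/(d_1d_2))d_2^{-1}$ matches the paper exactly, as does the symmetry argument for part (\textit{ii}). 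One small point worth flagging: the stated theorem in part (\textit{ii}) reads ``$W$ is not asymptotically free from $W^\Gamma$, nor is $W^T$ from $W^\Gamma$,'' but $W$ and $W^\Gamma$ lie in different families there; by symmetry with part (\textit{i}) the intended claim is almost certainly ``$W$ is not free from $W^\ltr$, nor $W^T$ from $W^\Gamma$,'' which is what you correctly prove --- you have silently repaired what looks like a typo in the statement.
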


\begin{proof}
Suppose first that $d_1 \rightarrow \infty $ and $ d_2 $ is
fixed. Hence in the summation from Theorem
\ref{thm:exp_mixed_mom} only the terms where
$f_{\varepsilon} (\sigma) = 0 $ will contribute to the
limit. As in the proof of Theorem \ref{thm:main}, the last
condition is equivalent to $ \sigma_{\varepsilon} $ is
non-crossing and $ \varepsilon $ is constant on the cycles
of $\sigma $. Since the partitions $\sigma $ and
$\sigma_{\varepsilon} $ are the same, it follows that the
limit as $ d_1 \rightarrow \infty $ of an arbitrary mixed
moment is written as a sum over non-crossing partitions, so
the terms in the right-hand side are in fact free
cumulants. The condition that $ \varepsilon $ is constant on
the cycles of $ \sigma $ gives that only free cumulants in
elements from only one of the families from part
(\textit{i}) do not vanish, hence the asymptotic freeness is
proved.
 
For the second part of (\textit{i}), we will use the
expansion of $ E \circ \tr \otimes \tr (W \cdot W^\Gamma ) $
from Theorem \ref{thm:exp_mixed_mom} in the case $
\varepsilon=(1, 1) $ and $ \eta = (1, -1)$.  Note that $ S_2
$ contains only two permutations, $ \gamma = (1, 2) $ and $
\sigma = (1), (2) $, both non-crossing. Also, since $
\varepsilon $ is constant, it is constant on the cycles of $
\sigma$ and $ \gamma$.  It follows that $
f_{\varepsilon}(\sigma) = f_{\varepsilon}(\gamma) =0.$
Moreover, $ \eta $ is constant on the cycles of $ \sigma $
and $ \sigma_{ \eta} = \sigma $ is non-crossing, hence $
f_\eta (\sigma) = 0 $. Therefore, Theorem 8 gives that
\[ 
E \circ \tr \otimes \tr \big( W \cdot W^\Gamma \big) = c^2 +
c\cdot d_2^{ f_{\eta}(\gamma)}
\]
As $ d_1 \rightarrow \infty $, the first moment of $ W $
approaches $ c $, and from Theorem \ref{thm:distrib:2}, so
does the first moment of $W^{ \Gamma } $, hence
   \[
   \lim_{ d_1 \rightarrow \infty}
   \kappa_2( W, W^{ \Gamma }) = 
   c\cdot d_2^{ f_{\eta}(\gamma)} \neq 0.
   \]
The same argument also shows that $ W^T $ and $ W^\ltr $ are
not asymptotically free, since $ W^\ltr = \big( W^{ \Gamma
}\big)^T $.
   
Finally, part (\textit{ii}) also follows from the argument
for part (\textit{i}), since the relation from Theorem
\ref{thm:exp_mixed_mom} is symmetric in $ ( d_1,
\varepsilon) $ and $ ( d_2, \eta) $.

\end{proof}


\section{The Case of Real Wishart Matrices}\label{sec:real-case}

In this section we examine the case of real Wishart
matrices. More precisely, $ W $ will denote now the
symmetric $ d_1 d_2 \times d_1 d_2 $ random matrix
\[
W = \frac{1}{ d_1 d_2 } \big( G_i G_j^\ast \big)_{ i, j
  =1}^{ d_1 }.
\]
where $ \{ G_i : 1 \leq i \leq d_1 \} $ is a family of $ d_2
\times p $ random matrices whose entries are independent
Gaussian random variables of mean 0 and variance 1.

Since $W= W^\str$ and $W^\ltr = W^\rtr$ we shall
only work with $W$ and $W^\rtr$. For this reason we shall
use slightly different notation that in the previous
sections. For $\epsilon \in \bZ_2 = \{-1, 1\}$ we let
\[
W^{(\epsilon)} =
\begin{cases} W & \epsilon = 1\\ W^\rtr & \epsilon = -1
\end{cases}.
\]
Thus our goal will be to consider, $W^{(\epsilon_1)} \cdots
W^{(\epsilon_n)}$, an arbitrary word in $W$ and $W^\rtr$ and
find its limiting expectation.

\begin{theorem}\label{thm:24}
With the notations from above, we have that
\[
\E(\tr \otimes \tr (W^{(\epsilon_1)} \cdots W^{(\epsilon_n)})) 
=
\sum_{\pi \in \cP_2(\pm n)} 
\Big( \frac{p}{d_1 d_2}\Big)^{\#(\pi\delta)/2}
d_1^{g(\pi)} d_2^{g(\epsilon \pi \epsilon)}
\]
where 
\[
g(\pi) = \#(\gamma \delta \gamma^{-1} \vee \pi) +
\#(\pi\delta)/2 - (n + 1)
\] 
and $ \epsilon \in S( \pm n ) $ is, as before, given by 
\[
\epsilon (k) = \left\{ 
\begin{array}{cl}
k, & \text{if} \  \epsilon_{ | k | } = 1 \\
- k & \text{if} \  \epsilon_{ | k | } = - 1.
\end{array}
\right.
\] 
\end{theorem}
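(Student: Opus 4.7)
My plan is to retrace the derivation of Theorem \ref{thm:exp_mixed_mom}, substituting the real form of Wick's rule at the decisive step. First I would expand $\Tr\otimes\Tr(W^{(\epsilon_1)}\cdots W^{(\epsilon_n)})$ in coordinates exactly as in Section \ref{sec:basic-calculations}. Writing $W = (d_1d_2)^{-1}(G_iG_j^*)_{ij}$ and unpacking the block trace, I obtain an indexed sum of products of $2n$ entries of the $G_i$'s, namely one factor $g^{(i_a)}_{s_a,t_{|a|}}$ for each $a\in[\pm n]$. Because $W^{\rtr}$ transposes the inner matrix indices of a block while leaving the outer block indices alone, the single $\epsilon \in \bZ_2^n$ of the real case plays the role that $\eta$ did in the complex case: after setting $s_a = r_{\epsilon_{|a|} a}$, the inner indices $s$ are subject to $s = s\circ\epsilon\gamma\delta\gamma^{-1}\epsilon$, while the block indices $i$ remain subject to $i = i\circ\gamma\delta\gamma^{-1}$.

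The key new ingredient is real Wick's rule. Whereas in the complex case each $g$ is paired with a $\bar g$ via some $\sigma \in S_n$, for real Gaussians the expectation of a product of $2n$ factors is the sum over \emph{all} pairings of those factors, i.e.\ over $\pi \in \cP_2(\pm n)$, and each pair $(a,b)\in\pi$ contributes $\E(g_a g_b) = \delta_{i_a,i_b}\,\delta_{s_a,s_b}\,\delta_{t_{|a|},t_{|b|}}$. Interchanging summation, for each fixed $\pi$ I count admissible triples $(i,s,t)$: the block index $i$ must be constant on the blocks of $\gamma\delta\gamma^{-1}\myvee\pi$, giving $d_1^{\#(\gamma\delta\gamma^{-1}\myvee\pi)}$ choices; the inner index $s$ must be constant on $\epsilon\gamma\delta\gamma^{-1}\epsilon\myvee\pi$, giving $d_2^{\#(\epsilon\gamma\delta\gamma^{-1}\epsilon\myvee\pi)}$; and the inner-product index $t$, extended from $[n]$ to $[\pm n]$ by $t(-k)=t_k$, must be constant on $\delta\myvee\pi$, giving $p^{\#(\delta\myvee\pi)}$.

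To reach the stated form I invoke Lemma \ref{lemma:pairingproduct} applied to the pair of pairings $(\delta,\pi)$ to replace $\#(\delta\myvee\pi)$ by $\#(\pi\delta)/2$, and use the invariance of cycle counts under conjugation by $\epsilon$ (which commutes with $\delta$ since both act on $[\pm n]$ through signs) to obtain $\#(\epsilon\gamma\delta\gamma^{-1}\epsilon\myvee\pi) = \#(\gamma\delta\gamma^{-1}\myvee\epsilon\pi\epsilon)$ and $\#(\epsilon\pi\epsilon\,\delta)/2 = \#(\pi\delta)/2$. Together these let me rewrite the $d_2$-exponent as $g(\epsilon\pi\epsilon)$ and the $d_1$-exponent as $g(\pi)$. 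Combining with the overall prefactor $(d_1d_2)^{-(n+1)}$ (one $(d_1d_2)^{-1}$ per $W$ plus one from $\tr\otimes\tr$) and regrouping the powers of $p$, $d_1$ and $d_2$ then gives exactly the claimed formula.

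The main obstacle is that the index of summation has grown from $S_n$ to the strictly larger $\cP_2(\pm n)$: real Wick now permits pairings with both endpoints positive or both negative, and such pairings have no complex analogue, so they break the natural identification $\pi = \sigma\delta\sigma^{-1}$ that underlay Section \ref{sec:basic-calculations}. Consequently the exponent $\#(\sigma)$ appearing in Theorem \ref{thm:exp_mixed_mom} must be replaced by $\#(\pi\delta)/2$, and the slightly delicate step is shifting $\epsilon$ from acting on $\gamma\delta\gamma^{-1}$ to acting on $\pi$ via the conjugation identity, which is what produces the clean symmetric appearance of $g(\pi)$ and $g(\epsilon\pi\epsilon)$ in the final statement.
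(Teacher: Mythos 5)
Your proposal is correct and follows essentially the same route as the paper: expand the block trace, substitute inner indices via $\epsilon$ so that the constraint becomes $s = s\circ\epsilon\gamma\delta\gamma^{-1}\epsilon$ while block indices satisfy $i = i\circ\gamma\delta\gamma^{-1}$, apply real Wick's formula to sum over $\cP_2(\pm n)$, count degrees of freedom via joins with $\pi$, and use conjugation invariance to express the $d_2$-exponent as $g(\epsilon\pi\epsilon)$. The only cosmetic difference is that you obtain the $p$-exponent directly as $\#(\delta\vee\pi)$ and convert via Lemma~\ref{lemma:pairingproduct}, whereas the paper phrases the same count through $k=k\circ\pi\delta$ and the pairing-up of cycles of $\pi\delta$.
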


\begin{proof}
\begin{eqnarray*}\lefteqn{%
\Tr \otimes \Tr (W^{(\epsilon_1)} \cdots W^{(\epsilon_n)}) }\\
& = &
\mathop{\sum_{i_{\pm 1}, \dots, i_{\pm n} = 1}^{d_1}}_
{i = i \circ \gamma \delta \gamma^{-1}}
\Tr ( W(i_1, i_{-1})^{(\epsilon_1)} \cdots W(i_n
i_{-n})^{(\epsilon_n)}) \\
& = & 
\sum_{i_{\pm 1}, \dots, i_{\pm n}}
\mathop{\sum_{k_{\pm 1}, \dots, k_{\pm n} = 1}^{d_2}}_
{k = k \circ \gamma \delta \gamma^{-1}}
( W(i_1, i_{-1})^{(\epsilon_1)})_{k_1 k_{-1}} \cdots (W(i_n
i_{-n})^{(\epsilon_n)})_{k_n k_{-n}} \\
& = & 
\sum_{i_{\pm 1}, \dots, i_{\pm n}}
\mathop{\sum_{j_{\pm 1}, \dots, j_{\pm n} = 1}^{d_2}}_
{j = j \circ \epsilon \gamma \delta \gamma^{-1} \epsilon}
W(i_1, i_{-1})_{j_1 j_{-1}} \cdots W(i_n i_{-n})_{j_n j_{-n}} \\
& = &
(d_1d_2)^{-n}
\mathop{\sum_{i_{\pm 1}, \dots, i_{\pm n}}}_%
{j_{\pm 1}, \dots, j_{\pm n}} 
\sum_{k_1, \dots, k_n=1}^p
g^{(i_1)}_{j_1k_1} g^{(i_{-1})}_{j_{-1}k_1} \cdots
g^{(i_n)}_{j_nk_n} g^{(i_{-n})}_{j_{-n}k_n}. \\
\end{eqnarray*}
In line 3 we momentarily break with our previous convention
about $W^{(\epsilon)}$ indicating whether or not we take a
partial transpose; in this case $W(i_u,i_{-u})^{(-1)}$ means
take the transpose of the $d_2 \times d_2$ matrix
$W(i_u,i_{-u})$. In passing from line 3 to line 4 above we
let $j = k \circ \epsilon$.

Now 
\[
\E(g^{(i_1)}_{j_1k_1} g^{(i_{-1})}_{j_{-1}k_1} \cdots
g^{(i_n)}_{j_nk_n} g^{(i_{-n})}_{j_{-n}k_n})
=
\sum_{\pi \in \cP_2(\pm n)}
\prod_{(r, s) \in \pi} \E(g^{(i_r)}_{j_rk_r} g^{(i_s)}_{j_sk_s}).
\]
On the right hand side in the expression above we are
extending $k$ as a function from $[n]$ to $[p]$ to a
function on $[\pm n]$ by requiring $k_r = k_{-r}$. This
means $k = k \circ \delta$. Now $\E(g^{(i_r)}_{j_rk_r}
g^{(i_s)}_{j_sk_s}) = 0$ unless $i_r = i_s$, $j_r = j_s$,
and $k_r = k_s$, in which case it is 1. Thus
\begin{eqnarray*}\lefteqn{%
\E(\Tr \otimes \Tr (W^{(\epsilon_1)} \cdots W^{(\epsilon_n)})) }\\
& = &
\sum_{\pi \in \cP_2(\pm n)}
d_1^{\#(\gamma \delta \gamma^{-1} \vee \pi) -n}
d_2^{\#(\epsilon \gamma \delta \gamma^{-1}\epsilon \vee \pi) -n}
p^{\#(\pi\delta)/2}.
\end{eqnarray*}
Since we require $k = k \circ \pi$ and $k = k \circ \delta$
we must have $k = k \circ \pi\delta$. Now as noted in Lemma
\ref{lemma:pairingproduct} the cycles of $\pi\delta$ appear
in pairs where one part of a pair is the conjugate by
$\delta$ of the other. Since $k$ is a function on $[n]$,
$\#(\pi\delta)$ double counts the degrees of freedom. Hence
the exponent of $p$ is $\#(\pi\delta)/2$. Thus
\begin{eqnarray*}\lefteqn{%
\E(\tr \otimes \tr (W^{(\epsilon_1)} \cdots
W^{(\epsilon_n)}))}\\ & = & \kern-1.0em \sum_{\pi \in
    \cP_2(\pm n)} \kern-0.5em \Big( \frac{p}{d_1
    d_2}\Big)^{\#(\pi\delta)/2} d_1^{\#(\gamma \delta
    \gamma^{-1} \vee \pi) + \#(\pi\delta)/2 - (n + 1)} \\ &
  & \qquad\qquad\qquad\quad\mbox{}\times d_2^{\#(\epsilon
    \gamma \delta \gamma^{-1}\epsilon \vee \pi) +
    \#(\pi\delta)/2 - (n + 1)}.
\end{eqnarray*}
Finally, note that 
\begin{eqnarray*}\lefteqn{
\#(\epsilon
\gamma \delta \gamma^{-1}\epsilon \vee \pi) +
\#(\pi\delta)/2 - (n + 1) } \\
& = & \frac{1}{2}\#(\gamma \delta
\gamma^{-1} \epsilon \pi \epsilon) + \#(\epsilon \pi \delta
\epsilon)/2 - (n + 1) = g(\epsilon \pi \epsilon),
\end{eqnarray*}
hence the conclusion.
\end{proof}

Next we shall show that $g(\pi) \leq 0$ and
$g( \epsilon \pi \epsilon ) \leq 0$ for all $\epsilon$ and $\pi$, and
to find for which pairings $\pi$ we have equality.

\begin{lemma}
Let $\pi \in \cP_2(\pm n)$ be a pairing such that there is
$(r, s) \in \pi$ with the same sign. Then $g(\pi) < 0$.
\end{lemma}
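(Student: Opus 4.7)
The plan is to re-express $g(\pi)$ entirely in terms of cycle counts of products of pairings, and then apply the Euler formula (\ref{eq:eulers_formula}) in its version for (possibly) non-transitive actions. Writing $\tau=\gamma\delta\gamma^{-1}$ for brevity, Lemma~\ref{lemma:pairingproduct} applied to the two pairings $\{\tau,\pi\}$ and $\{\delta,\pi\}$ of $[\pm n]$ gives $2\#(\tau\vee\pi)=\#(\pi\tau)$ and $2\#(\pi\vee\delta)=\#(\pi\delta)$. Hence
\[
g(\pi)=\frac{\#(\pi\tau)+\#(\pi\delta)}{2}-(n+1).
\]

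Next I would apply Euler's formula orbit-by-orbit to the pair of permutations $\alpha=\pi\tau$, $\beta=\pi\delta$ of the $2n$-element set $[\pm n]$. Since $\pi$ is an involution, $\alpha^{-1}\beta=\tau\pi\cdot\pi\delta=\tau\delta$. A direct computation shows $\tau\delta(k)=\gamma(k)$ for $k>0$ and $\tau\delta(-k)=-\gamma^{-1}(k)$ for $k>0$, so $\tau\delta$ leaves both $[n]$ and $[-n]$ invariant and acts as an $n$-cycle on each; in particular $\#(\tau\delta)=2$. Summing Euler's formula over the $c$ orbits of $\langle\pi\tau,\pi\delta\rangle$ gives
\[
\#(\pi\tau)+\#(\tau\delta)+\#(\pi\delta)\le 2n+2c,
\]
i.e.\ $\#(\pi\tau)+\#(\pi\delta)\le 2n+2c-2$, and therefore $g(\pi)\le c-2$.

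The main obstacle, and the only place where the same-sign-pair hypothesis enters, is to show that $c=1$. The plan here is a parity-and-connectivity argument. Because $\pi$ pairs up the $n$ positive and $n$ negative elements of $[\pm n]$, the number of $(+,+)$-pairs of $\pi$ equals the number of $(-,-)$-pairs; so the existence of a pair $(a,b)\in\pi$ with $a,b>0$ forces the existence of a pair $(-c,-d)\in\pi$ with $c,d>0$. Choose $x\in[n]$ with $\tau(x)=-c$ (possible since $\tau\colon[n]\to[-n]$ is a bijection); then
\[
\pi\tau(x)=\pi(-c)=-d\in[-n],
\]
so $\pi\tau$ carries some element of $[n]$ into $[-n]$. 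Since $\tau\delta=(\pi\tau)^{-1}(\pi\delta)$ lies in $\langle\pi\tau,\pi\delta\rangle$ and has orbits exactly $[n]$ and $[-n]$, the additional element $\pi\tau$ merges these two orbits. Hence $\langle\pi\tau,\pi\delta\rangle$ acts transitively on $[\pm n]$, $c=1$, and $g(\pi)\le-1<0$, as required.
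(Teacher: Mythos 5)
Your proof is correct and is essentially the paper's argument: both reduce $g(\pi)$ to the three cycle counts $\#(\pi\delta)$, $\#(\pi\gamma\delta\gamma^{-1})$, $\#(\gamma\delta\gamma^{-1}\delta)=2$, apply the Euler genus formula, and use the same-sign pair only to merge the two orbits $[n]$ and $[-n]$ of $\gamma\delta\gamma^{-1}\delta$ (the paper notes directly that $\pi\delta$ then sends a negative element to a positive one, which shortcuts your detour through the matching $(-,-)$ pair and $\pi\tau$). The only cosmetic difference is that the paper establishes transitivity first and invokes the transitive form of (\ref{eq:eulers_formula}), whereas you use the orbit-summed version and then show $c=1$.
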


\begin{proof}
Since $\pi$ connects two elements with the same sign,
$\pi\delta$ connects two elements with opposite signs. Then
the subgroup generated by $\gamma \delta \gamma^{-1} \delta$
and $\pi\delta$ acts transitively on $[\pm n]$.  Thus
\[
\#(\pi\delta) +
\#((\pi\delta)^{-1}\gamma \delta \gamma^{-1} \delta)
+ \#(\gamma \delta \gamma^{-1} \delta) \leq 2(n+1).
\]
We have 
\[
2\#(\gamma \delta \gamma^{-1} \vee \pi ) = \#(\gamma \delta
\gamma^{-1} \pi ) = \#(\gamma \delta \gamma^{-1} \delta
\delta \pi) = \#( (\pi \delta)^{-1} \gamma \delta
\gamma^{-1} \delta).
\]
Thus $g(\pi) = \#(\gamma \delta \gamma^{-1} \vee \pi ) +
\#(\pi\delta)/2 - (n + 1 ) \leq -1$.
\end{proof}

\begin{lemma}\label{lemma:pi_delta_invariance}
Suppose $\pi \in \cP_2(\pm n)$ and $\pi$ only connects
elements of opposite sign. Then $\pi \delta$ leaves $[n]$
invariant and $g(\pi) \leq 0$ with equality only if $\pi
\delta|_{[n]}$ is a non-crossing permutation.
\end{lemma}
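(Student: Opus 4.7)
The first assertion is immediate from the sign restriction on $\pi$: for $k > 0$ we have $\delta(k) = -k < 0$, and since $\pi$ pairs only elements of opposite sign, $\pi(-k) > 0$. So $\pi\delta$ sends $[n]$ into $[n]$ and restricts to a permutation $\sigma := \pi\delta|_{[n]} \in S_n$.

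For the inequality, my plan is to use the bijection between $S_n$ and $\cP_2^\delta(\pm n)$ described just before Lemma \ref{lemma:non-crossing_bijection} to write $\pi = \sigma\delta\sigma^{-1}$, and then rephrase both ingredients of $g(\pi)$ purely in terms of $\sigma$. A short sign-by-sign check (of the kind already performed in the proof of Lemma \ref{lemma:non-crossing_bijection}) shows that $\sigma\delta\sigma^{-1}\delta$ restricts to $\sigma$ on $[n]$ and to a copy of $\sigma^{-1}$ on $[-n]$, so $\#(\pi\delta) = 2\#(\sigma)$ and hence $\#(\pi\delta)/2 = \#(\sigma)$. For the join term, Lemma \ref{lemma:pairingproduct} yields $\#(\gamma\delta\gamma^{-1}\vee\pi) = \tfrac12 \#(\gamma\delta\gamma^{-1}\sigma\delta\sigma^{-1})$. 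The substitution $\tau = \sigma^{-1}\gamma$ (so $\gamma = \sigma\tau$) rewrites the product as $\sigma(\tau\delta\tau^{-1}\delta)\sigma^{-1}$, and the same sign-by-sign analysis gives $\#(\tau\delta\tau^{-1}\delta) = 2\#(\tau) = 2\#(\sigma^{-1}\gamma)$. Hence $\#(\gamma\delta\gamma^{-1}\vee\pi) = \#(\sigma^{-1}\gamma)$.

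Putting these together, $g(\pi) = \#(\sigma) + \#(\sigma^{-1}\gamma) - (n+1)$. Since $\gamma$ is an $n$-cycle, the subgroup generated by $\sigma$ and $\gamma$ acts transitively on $[n]$, so Euler's formula (\ref{eq:eulers_formula}) gives $\#(\sigma) + \#(\sigma^{-1}\gamma) + \#(\gamma) = n + 2(1 - g_0)$ for some genus $g_0 \geq 0$. Because $\#(\gamma) = 1$, this collapses to $g(\pi) = -2g_0 \leq 0$, with equality iff $g_0 = 0$, which by Definition \ref{def:non_crossing} is exactly the statement that $\sigma = \pi\delta|_{[n]}$ is non-crossing. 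The only step that requires a moment's thought is the conjugation identity $\gamma\delta\gamma^{-1}\sigma\delta\sigma^{-1} = \sigma(\tau\delta\tau^{-1}\delta)\sigma^{-1}$ that reduces the join calculation to a counting identity of the same shape as the one used for $\#(\pi\delta)$; once this is in place, everything else follows from the bijection and Euler's formula already set up in the paper.
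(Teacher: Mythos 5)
Your proposal is correct and follows essentially the same route as the paper's proof: both reduce $g(\pi)$ to $\#(\sigma) + \#(\sigma^{-1}\gamma) - (n+1)$ with $\sigma = \pi\delta|_{[n]}$ by using Lemma~\ref{lemma:pairingproduct} and sign-preservation, then conclude from Euler's formula (\ref{eq:eulers_formula}) together with Definition~\ref{def:non_crossing}. The only cosmetic difference is the intermediate step for the join term: you introduce $\tau = \sigma^{-1}\gamma$ and use the conjugation identity $\gamma\delta\gamma^{-1}\sigma\delta\sigma^{-1} = \sigma(\tau\delta\tau^{-1}\delta)\sigma^{-1}$, whereas the paper directly recognizes $(\pi\delta)^{-1}\gamma\delta\gamma^{-1}\delta$ as a product of two sign-preserving permutations that restricts to $\sigma^{-1}\gamma$ on $[n]$; these amount to the same counting identity.
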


\begin{proof}
Since both $\pi$ and $\delta$ switch signs, $\pi \delta$
preserves signs. Thus $\pi \delta$ leaves $[n]$
invariant. By Lemma \ref{lemma:pairingproduct} we have
$\#(\pi \delta) = 2 \#(\pi \delta|_{[n]})$. Also $2\#(\gamma
\delta \gamma^{-1} \vee \pi) = \#(\gamma \delta \gamma^{-1}
\delta \delta \pi) = \#( (\pi \delta)^{-1} \gamma \delta
\gamma^{-1} \delta) = 2 \#((\pi\delta|_{[n]})^{-1}
\gamma)$. Hence
\[
g(\pi) = \#(\gamma \delta \gamma^{-1} \vee \pi) + \#(\pi
\delta)/2 - (n + 1) \leq 0
\]
with equality only if $\pi\delta|_{[n]}$ is a non-crossing
permutation.
\end{proof}

\begin{lemma}\label{lemma:27}
Let $\epsilon \in \bZ_2^n$ and $\pi \in \cP_2(\pm n)$. Then
$g(\epsilon\pi\epsilon) < 0$ unless $\epsilon \pi \delta \epsilon$
leaves $[n]$ invariant. If $\epsilon \pi \delta \epsilon$
leaves $[n]$ invariant then $g(\epsilon\pi \epsilon) \leq 0$ with
equality only if $\epsilon \pi \delta \epsilon|_{[n]}$ is a
non-crossing permutation.
\end{lemma}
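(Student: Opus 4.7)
The plan is to reduce Lemma \ref{lemma:27} to the two preceding lemmas about $g(\pi)$ by conjugating the pairing $\pi$ by the sign-changing involution $\epsilon$. First I would observe that, just like $\delta$, the map $\epsilon$ is an involution of $[\pm n]$, and moreover $\epsilon\delta=\delta\epsilon$, since both operators act on each pair $\{k,-k\}$ by an independent sign change. Consequently $\pi' := \epsilon\pi\epsilon$ is again an element of $\cP_2(\pm n)$, and $\epsilon\pi\delta\epsilon = \pi'\delta$.

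Next I would translate the two hypotheses in Lemma \ref{lemma:27} into statements about $\pi'$. The condition that $\epsilon\pi\delta\epsilon = \pi'\delta$ leaves $[n]$ invariant means that for every $k>0$ the element $\pi'\delta(k)=\pi'(-k)$ lies in $[n]$; equivalently, $\pi'$ only pairs elements of opposite sign. Thus the dichotomy in Lemma \ref{lemma:27} exactly matches the dichotomy between the two preceding lemmas, applied to the pairing $\pi'$: either $\pi'$ contains a pair of equal sign (in which case the earlier same-sign lemma forces $g(\pi')<0$), or $\pi'$ only connects opposite signs (in which case Lemma \ref{lemma:pi_delta_invariance} gives $g(\pi')\leq 0$ with equality iff $\pi'\delta|_{[n]}$ is non-crossing).

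Finally I would unwind the substitution: $g(\epsilon\pi\epsilon)=g(\pi')$ by definition, and $\pi'\delta|_{[n]}=\epsilon\pi\delta\epsilon|_{[n]}$, so the non-crossing condition in the conclusion is precisely the one stated. No new combinatorics is required beyond checking the commutation relation $\epsilon\delta=\delta\epsilon$ and the sign-versus-invariance translation; the hard work has already been done in the previous two lemmas. The only potential pitfall is verifying that everything in the expression $g(\pi)=\#(\gamma\delta\gamma^{-1}\vee\pi)+\#(\pi\delta)/2-(n+1)$ behaves correctly under replacing $\pi$ by $\epsilon\pi\epsilon$, but since $g$ is defined pointwise on $\cP_2(\pm n)$ this is automatic once one knows $\epsilon\pi\epsilon\in\cP_2(\pm n)$.
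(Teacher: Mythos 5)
Your proposal is correct and follows essentially the same route as the paper: conjugate by $\epsilon$, use $\epsilon\delta=\delta\epsilon$ to identify $\epsilon\pi\epsilon\delta$ with $\epsilon\pi\delta\epsilon$, and then invoke the two preceding lemmas for the pairing $\epsilon\pi\epsilon$. If anything, you are slightly more explicit than the paper in spelling out the equivalence between ``$\epsilon\pi\delta\epsilon$ leaves $[n]$ invariant'' and ``$\epsilon\pi\epsilon$ pairs only elements of opposite sign,'' which is the right bookkeeping to record.
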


\begin{proof}
By
Lemma \ref{lemma:pi_delta_invariance} we have
$g(\epsilon\pi \epsilon) < 0$ unless $\epsilon \pi \epsilon \delta =
\epsilon \pi \delta \epsilon$ leaves $[n]$ invariant. If
$\epsilon \pi \delta \epsilon$ leaves $[n]$ invariant then again
by Lemma \ref{lemma:pi_delta_invariance} we have
$g(\epsilon\pi \epsilon ) \leq 0$ with equality only if $\epsilon\pi
\delta \epsilon|_{[n]}$ is a non-crossing permutation.
\end{proof}

\begin{lemma}\label{lemma:28}
Let $\epsilon \in \bZ_2^n$ and $\pi \in \cP_2(\pm
n)$. Suppose $\pi\delta$ leaves $[n]$ invariant. Then
$\epsilon\pi \delta \epsilon$ leaves $[n]$ invariant if and
only if $\epsilon$ is constant on the cycles of $\pi
\delta$.
\end{lemma}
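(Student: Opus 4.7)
The plan is to analyze, for each $k>0$, what it takes for $\epsilon\pi\delta\epsilon(k)$ to land in $[n]$, and to show that this places exactly the right sign constraint on $\epsilon$ along the cycles of $\pi\delta$. The starting point is the observation that $\pi\delta$ leaving $[n]$ invariant means $\pi$ pairs every negative element with a positive one (and hence every positive one with a negative), so for $k>0$ both $\pi\delta(k)\in[n]$ and $\pi(k)\in[-n]$. A short computation shows that the unique $m>0$ with $\pi(k)=-m$ is $m=(\pi\delta)^{-1}(k)$, since $\pi\delta(m)=\pi(-m)=k$.

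Next, I would split into the two cases determined by $\epsilon_k$. If $\epsilon_k=1$, then $\epsilon(k)=k$ and
\[
\epsilon\pi\delta\epsilon(k) \;=\; \epsilon(\pi\delta(k)) \;=\; \epsilon_{\pi\delta(k)}\,\pi\delta(k),
\]
which lies in $[n]$ iff $\epsilon_{\pi\delta(k)}=1$. If $\epsilon_k=-1$, then $\epsilon(k)=-k$ and
\[
\epsilon\pi\delta\epsilon(k) \;=\; \epsilon\pi(k) \;=\; \epsilon(-m) \;=\; -\epsilon_{m}\,m,
\]
with $m=(\pi\delta)^{-1}(k)$, so this lies in $[n]$ iff $\epsilon_{(\pi\delta)^{-1}(k)}=-1$. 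Thus $\epsilon\pi\delta\epsilon$ leaves $[n]$ invariant if and only if, for every $k>0$, whenever $\epsilon_k=+1$ one has $\epsilon_{\pi\delta(k)}=+1$, and whenever $\epsilon_k=-1$ one has $\epsilon_{(\pi\delta)^{-1}(k)}=-1$.

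From this bi-implication the lemma follows by a straightforward orbit argument. If $\epsilon$ is constant on every cycle of $\pi\delta$, both conditions are automatic, giving the ``if'' direction. Conversely, suppose $\epsilon\pi\delta\epsilon$ leaves $[n]$ invariant, and let $C$ be a cycle of $\pi\delta$. Pick any $k\in C$: if $\epsilon_k=+1$, the first condition propagates $\epsilon=+1$ forward along $C$ under $\pi\delta$, and since $C$ is finite this forces $\epsilon\equiv+1$ on $C$; if $\epsilon_k=-1$, the second condition propagates $\epsilon=-1$ backward along $C$ and similarly forces $\epsilon\equiv-1$ on $C$. Either way $\epsilon$ is constant on $C$, which completes the proof.

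The only mildly delicate step is the identification $|\pi(k)|=(\pi\delta)^{-1}(k)$ in the case $\epsilon_k=-1$; once that is in hand, everything is bookkeeping, so I do not expect a real obstacle here.
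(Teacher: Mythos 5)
Your proof is correct and amounts to the same idea as the paper's, just unpacked more explicitly. The paper observes that $\epsilon\pi\delta\epsilon$ is the conjugate of $\pi\delta$ by the involution $\epsilon$, so its cycles are the $\epsilon$-images $(\epsilon(i_1),\dots,\epsilon(i_k))$ of cycles $(i_1,\dots,i_k)$ of $\pi\delta$, and such an image lies wholly in $[n]$ or wholly in $[-n]$ precisely when $\epsilon$ is constant on $\{i_1,\dots,i_k\}$. Your element-by-element case split on $\epsilon_k=\pm1$ together with the identification $|\pi(k)|=(\pi\delta)^{-1}(k)$ is exactly the computation that underlies this conjugation statement (the $\epsilon_k=-1$ case is simply tracing the mirrored cycle of $\pi\delta$ on $[-n]$), and the finite-orbit propagation you use for the converse is the same constancy-on-cycles conclusion.
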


\begin{proof}
Suppose $(i_1, \dots, i_k)$ is a cycle of $\pi \delta$. All
these elements must have the same sign. The the
corresponding cycle of $\epsilon \pi \delta \epsilon$ is
$(\epsilon(i_1), \dots, \ab\epsilon(i_k))$. The elements of
$\epsilon \pi \delta \epsilon$ is $(\epsilon(i_1), \dots,
\ab\epsilon(i_k))$ have the same sign if and only if
$\epsilon$ is constant on $\epsilon \pi \delta \epsilon$ is
$(\epsilon(i_1), \dots, \ab\epsilon(i_k))$.
\end{proof}

The following theorem is the main result of this
section. Recall from Lemma \ref{lemma:sigma_epsilon} that if
$\epsilon$ is constant on the cycles of $\sigma$, then we
obtain $\sigma_\epsilon$ from $\sigma$ by reversing the
cycles on which $\epsilon = -1$.

\begin{theorem}\label{thm:main_real}
\[
\lim_{d_1, d_2 \rightarrow \infty}
\E(\tr \otimes \tr (W^{(\epsilon_1)} \cdots W^{(\epsilon_n)}))
=
\sum_{\sigma \in S_{NC}(n)} c^{\#(\sigma)}
\]
where the sum runs over all non-crossing permutations
$\sigma$ such that $\epsilon$ is constant on the cycles of
$\sigma$ and $\sigma_\epsilon$ is also non-crossing.
\end{theorem}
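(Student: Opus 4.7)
The plan is to pass to the limit in the exact formula of Theorem \ref{thm:24} and read off which pairings $\pi \in \cP_2(\pm n)$ actually contribute. Since $\frac{p}{d_1 d_2} \to c$ and the remaining factors are $d_1^{g(\pi)}\, d_2^{g(\epsilon\pi\epsilon)}$, only pairings with $g(\pi)=0$ \emph{and} $g(\epsilon\pi\epsilon)=0$ survive; everything else is $O(d_1^{-1})$ or $O(d_2^{-1})$. So the task is to identify exactly which $\pi$ satisfy both constraints and to rewrite the sum in terms of the permutation data used in the statement.

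First I would use the lemma preceding Lemma \ref{lemma:pi_delta_invariance} to throw out every $\pi$ that pairs two elements of the same sign, reducing to $\pi \in \cP_2^\delta(\pm n)$. Then by Lemma \ref{lemma:pi_delta_invariance}, for such $\pi$, $\pi\delta$ leaves $[n]$ invariant; set $\sigma := \pi\delta|_{[n]}$ so that $\pi = \sigma\delta\sigma^{-1}$ under the standard bijection recalled before Lemma \ref{lemma:non-crossing_bijection}. The same lemma says that $g(\pi)=0$ is equivalent to $\sigma \in S_{NC}(n)$, and also delivers the accounting identity $\#(\pi\delta)/2 = \#(\sigma)$, which will become the exponent of $c$ in the final sum.

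Next I would apply Lemma \ref{lemma:27} to the condition $g(\epsilon\pi\epsilon)=0$. It says two things: first that $\epsilon\pi\delta\epsilon$ must leave $[n]$ invariant, and second that $(\epsilon\pi\delta\epsilon)|_{[n]}$ must be non-crossing. By Lemma \ref{lemma:28} the first requirement is precisely that $\epsilon$ is constant on the cycles of $\pi\delta$, which (since the cycles of $\pi\delta$ in $[n]$ and $[-n]$ are conjugates of each other under $\delta$) is the same as $\epsilon$ being constant on the cycles of $\sigma$. The key remaining identification is $(\epsilon\pi\delta\epsilon)|_{[n]} = \sigma_\epsilon$, where $\sigma_\epsilon$ is the permutation from Lemma \ref{lemma:sigma_epsilon} obtained by reversing the cycles of $\sigma$ on which $\epsilon \equiv -1$. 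This is the only calculation that needs to be carried out by hand: substitute $\pi = \sigma\delta\sigma^{-1}$ into $\epsilon\pi\delta\epsilon$ and evaluate on $k > 0$, checking the two cases $\epsilon_k = \pm 1$ separately; in each case the answer matches the explicit formula for $\sigma_\epsilon^{\pm 1}$ recorded in the proof of Lemma \ref{lemma:epsilon_constant}.

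With that identification, the surviving terms of Theorem \ref{thm:24} are indexed exactly by $\sigma \in S_{NC}(n)$ such that $\epsilon$ is constant on the cycles of $\sigma$ and $\sigma_\epsilon$ is non-crossing, each contributing $c^{\#(\sigma)}$, which is the desired formula. The only real obstacle is the bookkeeping in the identification $(\epsilon\pi\delta\epsilon)|_{[n]} = \sigma_\epsilon$; once it is in hand everything else is a matter of matching exponents and invoking the previous lemmas.
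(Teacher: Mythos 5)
Your proof is correct and follows essentially the same route as the paper: pass to the limit in Theorem \ref{thm:24}, discard same-sign pairings, encode the surviving $\pi$ as $\sigma = \pi\delta|_{[n]}$, and translate the two vanishing conditions $g(\pi)=0$ and $g(\epsilon\pi\epsilon)=0$ into ``$\sigma$ non-crossing'' and ``$\epsilon$ constant on cycles of $\sigma$ and $\sigma_\epsilon$ non-crossing.'' The one small organizational difference is that you invoke Lemma \ref{lemma:27} directly and then verify $(\epsilon\pi\delta\epsilon)|_{[n]} = \sigma_\epsilon$, whereas the paper re-derives the identity $g(\epsilon\pi\epsilon) = \#(\sigma_\epsilon) + \#(\sigma_\epsilon^{-1}\gamma) - (n+1)$ from the factorization $\epsilon\pi\delta\epsilon = \delta\sigma_\epsilon^{-1}\delta\sigma_\epsilon$; both lead to the same place, and your version avoids repeating the Euler-formula computation already packaged in Lemma \ref{lemma:27}.
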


\begin{proof}
In the formula from Theorem \ref{thm:24}, only the pairings
$ \pi $ such that $ g(\pi) = g( \epsilon \pi \epsilon) = 0 $
will contribute to the summation when $ d_1, d_2 \rightarrow
\infty $.

Recall that $\cP_2^\delta(\pm n)$ denotes the pairings $\pi$
of $[ \pm n]$ such that $\pi\delta$ leaves $ [n]$
invariant. For such a $\pi$ we let $\sigma = \pi \delta|
_{[n]}$ be the corresponding permutation. We already noted
that this is a bijection from $\cP_2^\delta( \pm n)$ to
$S_n$ and $\pi \delta = \delta \sigma^{-1} \delta
\sigma$. From Lemma \ref{lemma:pi_delta_invariance}, the
condition $ g(\pi) = 0 $ implies that $ \sigma =
\delta\sigma^{-1} \delta \sigma_{ \ [ n ] } $ is
noncrossing.

According to Lemmas \ref{lemma:27} and \ref{lemma:28}, the
condition $ g( \epsilon \pi \epsilon) = 0 $ implies that $
\epsilon $ is constant on the cycles of $ \sigma $.
As in Lemma \ref{lemma:sigma_epsilon},
$\epsilon \pi \delta \epsilon = \delta \sigma_ \epsilon^{-1}
\delta \sigma_\epsilon$. Therefore
\begin{align*}
\#( \epsilon \gamma \delta \gamma^{-1} \epsilon \vee \pi)
&= 
\frac{1}{2}\#(\gamma \delta \gamma^{-1} \delta (\epsilon \pi \delta
\epsilon)^{-1})\\
& =
\frac{1}{2} \#( (\delta \sigma_\epsilon^{-1} \delta
 \sigma_\epsilon)^{-1} \gamma \delta \gamma^{-1} \delta)
  = 
 \#(\sigma_\epsilon ^{-1} \gamma),
 \end{align*}
 which gives
 \begin{equation*}
 g(\epsilon \pi \epsilon) = \#(\sigma_\epsilon) + \#(\sigma_\epsilon^{-1}\gamma) - (n
 + 1).
 \end{equation*}
hence the formula (\ref{eq:eulers_formula}) gives that $
g(\epsilon\pi\epsilon) \leq 0 $ with equality if and only if
$ \sigma_{\epsilon} $ is non-crossing.
  
\end{proof}

An immediate consequence of Theorem \ref{thm:main_real} is
part (1) of the following result.

\begin{theorem}\label{distrib-real}
Suppose $p/(d_1d_2) \rightarrow c$.

\smallskip\noindent
\emph{(1)} If $ d_1, d_2 \rightarrow \infty $, then $W^\rtr$
is asymptotically a shifted semi-circular operator with
$\kappa_1 = \kappa_2 = c$.

\smallskip\noindent
\emph{(2)} If $d_1 \rightarrow \infty$ and $ d_2 \geq 2$ is
fixed then the asymptotic distribution of $ d_2 W^\rtr$,
equals the distribution of the difference of two free
variables with Marchenko-Pastur laws, the first of parameter
$\displaystyle cd_2\frac{d_2 + 1}{2} $ and the second of
parameter $\displaystyle cd_2\frac{d_2 - 1}{2} $.

\smallskip\noindent
\emph{(3)} If $ d_1 $ is fixed and $ d_2 \rightarrow \infty
$ then the asymptotic distribution of $ d_1 W^\rtr $, equals
the distribution of the difference of two free variables
with Marchenko-Pastur laws, the first of parameter
$\displaystyle cd_1\frac{d_1 + 1}{2} $ and the second of
parameter $\displaystyle cd_1\frac{d_1 - 1}{2} $.
    
\end{theorem}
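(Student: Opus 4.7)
Proof plan.  Part (1) is an immediate application of Theorem \ref{thm:main_real}: setting $\epsilon \equiv -1$ gives the moments of $W^\rtr$ as a sum over $\sigma \in S_{NC}(n)$ for which $\sigma_\epsilon = \sigma^{-1}$ is also non-crossing, and Lemma \ref{lemma:non-crossing-inverse} restricts these to involutions (cycles of length at most $2$).  The surviving free cumulants are therefore $\kappa_1 = \kappa_2 = c$ and $\kappa_n = 0$ for $n \geq 3$, i.e.\ the shifted semicircle claimed.

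For Part (3) the plan is to return to Theorem \ref{thm:24} with $\epsilon \equiv -1$ and let only $d_2 \to \infty$.  Vanishing of the $d_2$-exponent $g(\epsilon \pi \epsilon)$, combined with Lemmas \ref{lemma:pi_delta_invariance}, \ref{lemma:27}, and \ref{lemma:28}, restricts the surviving pairings to $\pi = \sigma \delta \sigma^{-1}$ with $\sigma \in S_n$ such that $\sigma^{-1} \in S_{NC}(n)$.  The $d_1$-exponent $g(\pi)$ no longer vanishes; using the identity $\#(\gamma \delta \gamma^{-1} \vee \pi) = \#(\sigma^{-1} \gamma)$ from the end of the proof of Theorem \ref{thm:main_real} (with $\epsilon \equiv 1$), I obtain $g(\pi) = \#(\sigma^{-1} \gamma) + \#(\sigma) - (n+1)$.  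Re-indexing by $\tau := \sigma^{-1} \in S_{NC}(n)$ and rescaling by $d_1^n$ so as to pass to $d_1 W^\rtr$, the limit moment becomes
\[
\sum_{\tau \in S_{NC}(n)} c^{\#(\tau)} d_1^{\,\#(\tau \gamma) + \#(\tau) - 1}.
\]
Lemma \ref{lemma:bn} of Banica--Nechita, applied with $\tau$ non-crossing and $\gamma$ a full $n$-cycle, gives $\#(\tau \gamma) = 1 + e(\tau)$, and so the sum rewrites as $\sum_{\tau \in NC(n)} (d_1 c)^{\#(\tau)} d_1^{\,e(\tau)}$, which is exactly the sum that appeared in the proof of Theorem \ref{thm:distrib:2}.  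Hence $d_1 W^\rtr$ has the same limit distribution as in the complex case, i.e.\ the advertised free difference of two Marchenko--Pastur laws.

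Part (2) will be the symmetric argument with $d_1$ and $d_2$ swapped.  Now it is $g(\pi) = 0$ that must hold, forcing $\sigma$ \emph{itself} (not $\sigma^{-1}$) to be non-crossing via Lemma \ref{lemma:pi_delta_invariance}.  The same identity, applied instead to $\epsilon \equiv -1$ where $\sigma_\epsilon = \sigma^{-1}$, yields $g(\epsilon \pi \epsilon) = \#(\sigma \gamma) + \#(\sigma) - (n+1)$, and after rescaling by $d_2^n$ and applying Lemma \ref{lemma:bn} directly to the non-crossing $\sigma$, the limit moments of $d_2 W^\rtr$ coincide with $\sum_{\sigma \in NC(n)} (d_2 c)^{\#(\sigma)} d_2^{\,e(\sigma)}$, giving the analogous free difference of Marchenko--Pastur laws in $d_2$.

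The main obstacle I anticipate is a bookkeeping issue specific to Part (3): the surviving constraint forces $\sigma^{-1}$, rather than $\sigma$, to be non-crossing, so the substitution $\tau = \sigma^{-1}$ must be made carefully before Lemma \ref{lemma:bn} becomes applicable, and one has to track that $g(\pi)$ (the $d_1$-exponent, which was previously zero in the double-limit proof of Theorem \ref{thm:main_real}) is now the object carrying the Banica--Nechita combinatorics.  This subtlety does not appear in Part (2), where the roles of $\sigma$ and $\sigma^{-1}$ are swapped, and once the re-parametrisation is in place both parts collapse onto the same computation already carried out in the complex-case proof of Theorem \ref{thm:distrib:2}.
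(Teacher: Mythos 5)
Your proposal is correct and follows essentially the same route as the paper: restrict to pairings $\pi = \sigma\delta\sigma^{-1}$ via the vanishing of the exponent attached to the growing dimension, identify the surviving constraint as non-crossingness of $\sigma$ (resp.\ $\sigma^{-1}$), and evaluate the remaining exponent with Lemma \ref{lemma:bn} to recover the cumulant computation of Theorem \ref{thm:distrib:2}. The only (immaterial) difference is that the paper works out the $d_1\to\infty$, $d_2$ fixed case in detail by a direct pointwise computation of $\gamma\delta\gamma^{-1}\delta\sigma\delta\sigma^{-1}\delta$ and declares the other case similar, whereas you work out the $d_1$ fixed case and reuse the identity $g(\epsilon\pi\epsilon)=\#(\sigma_\epsilon)+\#(\sigma_\epsilon^{-1}\gamma)-(n+1)$ from the proof of Theorem \ref{thm:main_real}.
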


  \begin{proof}
Letting  $ \epsilon _j = -1 $ for all $ j =1, \dots, n $ in Theorem \ref{thm:24}, we obtain that
  \begin{equation}\label{p-t:moments}
  E \circ \tr \otimes \tr \big( (W^\Gamma)^n  \big) 
  = \sum_{\pi \in \cP_2(\pm n)} 
  \Big( \frac{p}{d_1 d_2}\Big)^{\#(\pi\delta)/2}
  d_1^{g(\pi)} d_2^{g(\delta \pi \delta)} .
  \end{equation}  
  
   Suppose first that $ d_1, d_2 \rightarrow \infty $. Then, in the summation from (\ref{p-t:moments}), only terms with $ g(\pi) = g ( \delta \pi \delta) = 0 $ will contribute to the limit. From Theorem \ref{thm:main_real}, this is equivalent to both $ \sigma $ and $ \sigma_\delta $ be noncrossing. But
    $ \sigma_\delta = \sigma^{-1} $
  so Lemma \ref{lemma:non-crossing-inverse} implies that $ \sigma $ has only cycles of length 1 or 2, hence part (1) is proved.
  
  Suppose now that $ d_1 \rightarrow \infty $ and $ d_2 $ is fixed. Then only $ \pi $ such that $ g(\pi ) = 0 $ will contribute to the limit in the summation (\ref{p-t:moments}). Applying again Lemma \ref{lemma:pi_delta_invariance}, this is equivalent to $ \pi = \sigma \delta \sigma^{-1} $, for $ \sigma $ a non-crossing permutation on $ [ n ] $. In this case, we have that
  $
  g(\delta \pi\delta) = g( \delta \sigma \delta \sigma^{-1} \delta).
  $
  
   Also,
   $
   \# ( \gamma \delta \gamma^{-1} \vee \delta \pi \delta ) =
    \frac{1}{2}
    \# ( \gamma \delta \gamma^{-1} \delta \sigma \delta \sigma^{-1}\delta ),
   $
    and,
 if $ k \in [ n ] $, we have that
 \begin{align*}
  \gamma \delta \gamma^{-1} \delta \sigma \delta \sigma^{-1} \delta (k)
   & = 
   \gamma \delta \gamma^{-1} \delta \sigma  (  k )
   =
   \gamma \sigma ( k )
    \\
   \gamma \delta \gamma^{-1} \delta \sigma \delta \sigma^{-1} \delta ( - k)
   & = 
   \gamma \delta \gamma^{-1} \delta \sigma \delta \sigma^{-1}  ( k )
   = 
   \gamma \delta \gamma^{-1} \delta \sigma ( -  \sigma^{-1}  ( k ) )\\
   &  =
   \gamma \delta \gamma^{-1} ( \sigma^{-1} ( k ) )
   =
   \gamma^{-1} ( \sigma^{-1} ( k ) ) = ( \sigma \circ \gamma ) ^{ -1} ( k )
 \end{align*} 
 
  Moreover,
    \[ 
    \# ( ( \delta \pi \delta) \delta ) = \# ( \delta \pi) = \# ( ( \pi \delta)^{ -1} )  = 2 \# (\sigma ),
    \]
    hence, Lemma \ref{lemma:bn} gives that
    \[
    g( \delta \pi \delta ) = \# ( \gamma \sigma ) + \# ( \sigma ) - ( n + 1) = \#(\sigma) + e(\sigma) - n \]
    so equation (\ref{p-t:moments}) becomes
\[
\lim_{d_1 \rightarrow \infty}
    E \circ \tr \otimes \tr \big( (W^\Gamma)^n  \big) 
      = \sum_{\sigma \in S_{NC}(n)}
      c^{\#( \sigma)}
       d_2^{ e (\sigma) + \# ( \sigma ) - n } .
\]
Thus
\[
\lim_{d_1 \rightarrow \infty}
    E \circ \tr \otimes \tr \big( (d_2W^\Gamma)^n  \big) 
      = \sum_{\sigma \in S_{NC}(n)}
      ( c d_2)^{\#( \sigma)}
       d_2^{ e (\sigma)} 
= \sum_{\sigma \in S_{NC}(n)} \kappa_\sigma.
\]
where $\kappa_n = c d_2$ for $n$ odd and $\kappa_n = c d_2^2$ for $n$ even. The conclusion follows because $\kappa_n = (cd_2) \frac{d_2+1}{2} + (-1)^n (cd_2) \frac{d_2 - 1}{2}$ (see the proof of Theorem \ref{thm:distrib:2}). The case $ d_1 $ fixed and $ d_2 \rightarrow \infty $ is similar.  
    
  \end{proof}

 \begin{theorem}
 If both $ d_1, d_2 \rightarrow \infty $, then $\{ W, W^\rtr\}$ is an asymptotically free family.
 \end{theorem}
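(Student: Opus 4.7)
The plan is to read the free cumulants of the limit joint distribution directly off the moment formula in Theorem~\ref{thm:main_real}, observe that every \emph{mixed} cumulant vanishes, and invoke the standard vanishing-of-mixed-cumulants characterization of freeness, see \cite[Thm.~11.16]{ns}.

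First I would fix an arbitrary word $W^{(\epsilon_1)} \cdots W^{(\epsilon_n)}$ with $\epsilon_i \in \{-1,+1\}$ and invoke Theorem~\ref{thm:main_real} to write
\[
\lim_{d_1, d_2 \to \infty} \E(\tr \otimes \tr(W^{(\epsilon_1)} \cdots W^{(\epsilon_n)})) = \sum_\sigma c^{\#(\sigma)},
\]
where the sum runs over $\sigma \in S_{NC}(n)$ for which $\epsilon$ is constant on the cycles of $\sigma$ and $\sigma_\epsilon$ is also non-crossing. Identifying each such $\sigma$ with the non-crossing partition of $[n]$ formed by its cycles, we are summing over non-crossing partitions each of whose blocks is \emph{monochromatically} colored by the common value of $\epsilon$ on that block.

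Next I would compare this with the moment--cumulant relation
\[
\phi\big(w^{(\epsilon_1)} \cdots w^{(\epsilon_n)}\big) = \sum_{\pi \in NC(n)} \kappa_\pi\big[w^{(\epsilon_1)}, \dots, w^{(\epsilon_n)}\big],
\]
in which $\kappa_\pi$ factorizes multiplicatively over the blocks of $\pi$. Monochromaticity forces each surviving block to consist entirely of $W$-positions or entirely of $W^\rtr$-positions, so only pure-color block factors $\kappa_{|V|}(w,\dots,w)$ or $\kappa_{|V|}(w^\rtr,\dots,w^\rtr)$ occur. Matching coefficients block by block pins the cumulants down: a $W$-block of any size contributes $c$, while a $W^\rtr$-block contributes $c$ when $|V| \le 2$ and $0$ otherwise, the last dichotomy reflecting Biane's characterization of non-crossing permutations---reversing an $\epsilon = -1$ cycle of length $\ge 3$ destroys its increasing cyclic order, so such a cycle is excluded by the requirement $\sigma_\epsilon \in S_{NC}(n)$.

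The only subtlety worth checking carefully is that the condition ``$\sigma_\epsilon \in S_{NC}(n)$'' depends on each block separately, not on interactions between blocks of different colors; this is exactly the content of the increasing-orientation observation above. Granted that, the moment formula factors as a product over blocks and the complete absence of mixed blocks translates directly into the vanishing of every free cumulant containing both $w$ and $w^\rtr$. By \cite[Thm.~11.16]{ns} this is asymptotic freeness of $\{W, W^\rtr\}$. As a consistency check, the pure-color cumulants extracted above reproduce the Marchenko--Pastur distribution of $w$ and the shifted semicircular distribution of $w^\rtr$ stated in Theorem~\ref{distrib-real}(1).
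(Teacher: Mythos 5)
Your proof is correct and follows essentially the same route as the paper, which simply cites Theorems~\ref{thm:main_real} and~\ref{distrib-real} and leaves the bookkeeping implicit (the fuller version of this argument appears in the proof of the complex analogue, Theorem~\ref{thm:main}). You fill in exactly the details the paper elides: identifying the set of permutations contributing to the limit in Theorem~\ref{thm:main_real} with non-crossing partitions whose blocks are monochromatic, and observing that since each block's admissibility (namely, that reversing an $\epsilon=-1$ cycle keeps it in increasing cyclic order, hence $|V|\le 2$) is checked independently of the other blocks, the moment formula is a sum of multiplicative weights $c^{\#(\pi)}$ over the admissible $\pi$, so the free cumulants are pinned down and all mixed ones vanish. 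The care you take with Biane's characterization of non-crossing permutations (orientation of cycles matters, not just the underlying partition) is worth keeping, since the paper's Remark~\ref{remark:non-crossing_permutations} states the characterization a bit loosely, and this is precisely what makes the per-block reduction legitimate.
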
 
  
 \begin{proof}
 The result is a consequence of Theorems \ref{thm:main_real} and \ref{distrib-real}.

 \end{proof}

 \begin{remark}
 For $ W $ a real Wishart random matrix, $W^\rtr $ is not asymptotically free from $ W $ if $ d_1 $ is fixed or  if $ d_2 $ is fixed.
 \end{remark} 
  
 Indeed, for $ n =2 $ and $ \epsilon_1 = 1 $ and $ \epsilon _2 = -1 $, the formula from Theorem \ref{thm:24} gives 
 
\[
 E \circ \tr \otimes \tr \big(  W W^\rtr \big) 
 = 
 \sum_{\pi \in \cP_2(\pm 2)} 
 \Big( \frac{p}{d_1 d_2}\Big)^{\#(\pi\delta)/2}
 d_1^{g(\pi)} d_2^{g(\epsilon \pi \epsilon)}.
\]  
  There are only 3 pairings in $ P_2( \pm 2 ) $: $ \pi_1 = (1, -1 ) , ( 2, -2 ) $,
   $ \pi_2 = (1, 2 ),( -1, -2 ) $
 and
  $ \pi_3 = ( 1, -2 ), ( -1, 2) $.
  Direct calculations give that
   $ \pi_1 \delta = \textrm{id} $, 
   $ \pi_2 \delta = (1, -2), (-1, 2) $ and 
   $ \pi_2 \delta = (1, 2) , (-1, -2) $.

 Moreover, $ \epsilon \pi_1 \epsilon = \pi_1 $, while
 $ \epsilon \pi_2 \epsilon = \pi_3 $ and $ \epsilon \pi_3 \epsilon = \pi_2 $; 
 also, for $ n =2 $, we have that
  $ \gamma \delta \gamma^{-1} 
  = (1, -2), ( -1, 2).$  Therefore $ g(\pi_1) =g(\pi_2) = 0 $ and $ g(\pi_3 ) = 1$, so
  \[
   E \circ \tr \otimes \tr \big(  W W^\rtr \big) 
   = 
   \Big( \frac{p}{d_1 d_2}\Big)^2 + 
   \Big( \frac{p}{d_1 d_2}\Big) \cdot
   \Big( \frac{1}{d_1} + \frac{1}{d_2} \Big) 
  \]
 and the second term in the equation above does not cancel asymptotically unless both $ d_1, d_2 \rightarrow \infty $.


\thebottomline
\end{document}